\documentclass[1p]{elsarticle}

\usepackage{lineno}%,hyperref}
\modulolinenumbers[5]
\journal{  }

%%%%%%%%%%%%%%%%%%%%%%%
%% Elsevier bibliography styles
%%%%%%%%%%%%%%%%%%%%%%%
%% To change the style, put a % in front of the second line of the current style and
%% remove the % from the second line of the style you would like to use.
%%%%%%%%%%%%%%%%%%%%%%%

%% Numbered
%\bibliographystyle{model1-num-names}

%% Numbered without titles
%\bibliographystyle{model1a-num-names}

%% Harvard
%\bibliographystyle{model2-names.bst}\biboptions{authoryear}

%% Vancouver numbered
%\usepackage{numcompress}\bibliographystyle{model3-num-names}

%% Vancouver name/year
%\usepackage{numcompress}\bibliographystyle{model4-names}\biboptions{authoryear}

%% APA style
%\bibliographystyle{model5-names}\biboptions{authoryear}

%% AMA style
%\usepackage{numcompress}\bibliographystyle{model6-num-names}

%% `Elsevier LaTeX' style
%\bibliographystyle{elsarticle-num}
%%%%%%%%%%%%%%%%%%%%%%%

\usepackage{mathrsfs}
\usepackage{amsfonts}
\usepackage{enumerate}
\usepackage{latexsym}
\usepackage[all,cmtip]{xy}
\usepackage{verbatim}
\usepackage{color}

%\usepackage{ifpdf}
%\ifpdf
% \usepackage[colorlinks,final,backref=page,hyperindex]{hyperref}
%\else
%  \usepackage[colorlinks,final,backref=page,hyperindex,hypertex]{hyperref}
%\fi
\usepackage[colorlinks,linkcolor=blue,citecolor=blue]{hyperref}

\usepackage{amsmath,amssymb,xspace,amsthm}

\numberwithin{equation}{section}

\def\gl{\mathfrak{gl}}
\def\sl{\mathfrak{sl}}
\def\fa{\mathfrak{a}}

\def\dg{\dot{\mathfrak{g}}}
\def\ddg{\ddot{\mathfrak{g}}}
\def\fg{\mathfrak{g}}

\def\fL{\mathfrak{L}}
\def\ca{\mathfrak{a}}
\def\fK{\mathfrak{K}}

\def\Supp{\mathrm{Supp}}

\def\mod{\rm mod\,\,}

\def\bC{\mathbb{C}}
\def\bN{\mathbb{N}}
\def\bZ{\mathbb{Z}}
\def\bQ{\mathbb{Q}}
\def\Inv{{\rm Inv}}

\def\Der{\mathrm{Der}}

\def\supp{\mathrm{supp}}
\def\ann{\mathrm{ann}}

\def\Vir{\mathrm{Vir}}
\def\Hom{\mathrm{Hom}}

\def\Ind{{\rm Ind}}

\def\Bil{{\rm Bil}}

\def\tL{\widetilde{\fL}}

\def\fl{\mathfrak{l}}
\def\hL{\hat{\fL}}
\def\so{\mathfrak{so}}

\newtheorem{theo}{{Theorem}}%[section]
\newtheorem{lemm}[theo]{Lemma}
\newtheorem{remark}[theo]{Remark}

\newtheorem{prop}[theo]{Proposition}
\newtheorem{exam}{Example}

\allowdisplaybreaks

\begin{document}

\begin{frontmatter}

%% Title, authors and addresses

%% use the tnoteref command within \title for footnotes;
%% use the tnotetext command for theassociated footnote;
%% use the fnref command within \author or \address for footnotes;
%% use the fntext command for theassociated footnote;
%% use the corref command within \author for corresponding author footnotes;
%% use the cortext command for theassociated footnote;
%% use the ead command for the email address,
%% and the form \ead[url] for the home page:
%% \title{Title\tnoteref{label1}}
%% \tnotetext[label1]{}
%% \author{Name\corref{cor1}\fnref{label2}}
%% \ead{email address}
%% \ead[url]{home page}
%% \fntext[label2]{}
%% \cortext[cor1]{}
%% \address{Address\fnref{label3}}
%% \fntext[label3]{}

\title{$(d,\sigma)$-twisted Affine-Virasoro superalgebras}

%% use optional labels to link authors explicitly to addresses:
%% \author[label1,label2]{}
%% \address[label1]{}
%% \address[label2]{}
%Classification of simple quasi-finite modules for  

\author{Rencai L\"u, Xizhou You and Kaiming Zhao}

\begin{abstract}
For any finite dimensional Lie superalgebra $\dg$ (maybe a Lie algebra) with an even derivation $d$ and a finite order automorphism $\sigma$ that commutes with $d$, we introduce  the $(d,\sigma)$-twisted Affine-Virasoro superalgebra $\fL=\fL(\dg,d,\sigma)$ and determine its universal central extension $\hL=\hL(\dg,d,\sigma)$. This is a huge class of infinite-dimensional Lie superalgebras. Such Lie superalgebras consist of  many new and  well-known Lie algebras and superalgebras, including the   Affine-Virasoro superalgebras,  the twisted Heisenberg-Virasoro algebra, the mirror Heisenberg-Virasoro algebra, the W-algebra $W(2,2)$,   the gap-$p$ Virasoro algebras, the Fermion-Virasoro algebra, the $N=1$ BMS superalgebra, the  planar Galilean conformal algebra. Then we   give the classification of cuspidal $A\fL$-modules by using the weighting functor from $U(\mathfrak{h})$-free modules to weight modules. Consequently, we give the classification of simple cuspidal $\fL$-modules by using the $A$-cover method. Finally, all simple quasi-finite modules over $\fL$ and $\hL$ are classified.  Our results  recover many known Lie superalgebra results from mathematics and mathematical physics, and give many new Lie superalgebras.
\end{abstract}

\begin{keyword}
Virasoro algebra, twisted affine superalgebra, weighting functor, $A$-cover, quasi-finite module
\MSC[2000] 17B10, 17B20, 17B65, 17B66, 17B68
\end{keyword}

\end{frontmatter}

%\linenumbers

\section{Introduction}
We denote by $\bZ, \bZ_+, \bN, \bQ$ and $\bC$ the sets of all integers, non-negative integers, positive integers, rational numbers and complex numbers, respectively. All vector spaces and algebras in this paper are over $\bC$. Any module over a Lie superalgebra or an associative superalgebra is assumed to be $\bZ_2$-graded. A vector space $V$ is called a superspace if $V$ is endowed with a $\bZ_2$-gradation $V=V_{\bar 0}\oplus V_{\bar 1}$. The parity of a homogeneous element $v\in V_{\bar{i}}$ is denoted by $|v|=\bar{i}\in \bZ_2$.  Throughout this paper, $v$ is always assumed to be a homogeneous vector whenever we write $|v|$ for a vector $v\in V$.

Let $A=\bC[t,t^{-1}]$ be the Laurent polynomial algebra. The Witt algebra $W=\Der(A)$ has a basis $\{\fl_i=t^{i+1}\frac{d}{dt}\,|\, n\in \mathbb{Z}\}$ with Lie brackets given by
$$[\fl_i, \fl_j]=(j-i)\fl_{i+j}.$$  The Virasoro algebra  $\Vir=W\oplus   \bC z$ (the universal central extension of the Witt algebra $W$) and the Affine Kac-Moody (super)algebras are two important classes of infinite dimensional Lie (super)algebras that have been studied and used by many mathematicians and physicists in many  different research areas. Weight modules with finite-dimensional weight spaces are called quasi-finite modules (also Harish-Chandra modules, finite modules in literature). Such modules were classified for many Virasoro-related (super)algebras including the Virasoro algebra \cite{Ma}, the higher rank Virasoro algebra \cite{LZ,S2}, the generalized Virasoro algebra \cite{GLZ1}, the twisted Heisenberg-Virasoro algebra \cite{LZ2}, the $W$-algebra $W(2,2)$ \cite{LGZ}, the Schr\"{o}dinger-Virasoro algebra \cite{L}, the non-twisted affine-Virasoro algebra \cite{ GHL,LPX,Rao2}, the gap-$p$ Virasoro algebras \cite{X}, the mirror Heisenberg-Virasoro algebra \cite{LPXZ}, the Fermion-Virasoro algebra \cite{XZ}, the map Virasoro-related (super)algebras \cite{CLW,GLZ2,Rao}. For more related results, we refer the reader to \cite{A,B,CP,Rao,Rao1,GG,HLW,IK,LiuZ,NY,OR} and references therein.

Let $\dg$ be a finite dimensional Lie superalgebra (maybe a Lie algebra), $d$ be an even derivation on $\dg$, and $\sigma$ be an order $n$ automorphism of $\dg$ that commutes with $d$. Then $\dg=\oplus_{i=0}^{n-1}  \dg_{[i]}$, where $\dg_{[i]}=\{g\in \dg|\sigma(g)=\omega_n^i g\}$ for all $[i]=i+n\bZ\in \bZ_n$ and $\omega_n={\rm exp}(\frac{2\pi \sqrt{-1}}{n})$. The automorphism $\tilde{\sigma}$ of the loop algebra $\dg\otimes \bC[t^{\frac 1n},t^{-\frac 1n}]$ is defined by $\tilde{\sigma}(x\otimes t^{\frac kn})=\sigma(x)\otimes (\omega_n^{-1}t^{\frac 1n})^k$.  Let $\fg$ be the fixed point subalgebra of $\tilde{\sigma}$, i.e.,  $$\fg=\oplus_{i=0}^{n-1}\dg_{[i]}\otimes t^{\frac in}A.$$ Then we have the Lie superalgebra $\fL(\dg,d,\sigma)= W\ltimes\fg$ with brackets
\begin{align*}
&[\fl_i,x\otimes t^a]=(ax+id(x))\otimes t^{i+a},\\
&[x\otimes t^a, y\otimes t^b]=[x,y]\otimes t^{a+b},
\end{align*}
where $\fl_i=t^{i+1}\frac{d}{d t}$ for all $i\in \bZ; x\otimes t^a,y\otimes t^b\in \fg$ for all $a,b\in \frac 1n \bZ$. Now we have the natural  $\frac 1n \bZ$-gradation: $$\fL=\fL(\dg,d,\sigma)=\oplus_{a\in \frac 1n \bZ}\fL_a\text{ where }\fL_a=\{x\in \fL:[\fl_0,x]=ax\}.$$

 Throughout this paper, we assume the following technical condition:

 \begin{equation}1 \mbox{ is not an eigenvalue of}\,\, d.\end{equation}

\noindent Then $d-1$ acts bijectively on $\dg$. From $\fL=[\fl_0,\fL]+[\fl_1,\fL_{-1}]$ we know that $\fL$ is perfect.  Certainly $\fL$  is perfect if $\dg$ is perfect even if Condition (1.1) does not hold. But there are many existing  examples for $\fL$ to be perfect while $\dg$ is not.

We call the Lie superalgebra  $\fL$ and its   universal central extension  $\hL=\hL(\dg,d,\sigma)$   as $(d,\sigma)$-\emph{twisted Affine-Virasoro superalgebras}. There exist many known and new interesting examples of such Lie (super)algebras in the literature, including the    Affine-Virasoro superalgebras,  the twisted Heisenberg-Virasoro algebra, the mirror Heisenberg-Virasoro algebra, the W-algebra $W(2,2)$,   the gap-p Virasoro algebras, the Fermion-Virasoro algebra, the $N=1$ BMS superalgebra, the  planar Galilean conformal algebra. Certainly there are also many new interesting $(d,\sigma)$-twisted Affine-Virasoro superalgebras.
 See examples in Section 5.

 Let $L$ be any subalgebra of $\fL$ or $\hL$ containing $\fl_0$. A $L$-module $M$ is called a \emph{weight} module if the action of $\fl_0$ on $M$ is diagonalizable i.e., $M=\bigoplus\limits_{\lambda\in\bC}M_\lambda$, where $M_\lambda=\{v\in M\,|\,\fl_0v=\lambda v\}$ is called the weight space of weight $\lambda$. The set $$\Supp(M):=\{\lambda\in\bC\,|\,M_\lambda\neq0\}$$ is called the \emph{support} of $M$.  Clearly, if $M$ is a simple weight module, then $\Supp(M)\subseteq\lambda+\frac{1}{n}\bZ$ for some $\lambda\in\bC$. A weight module $M$ is called \emph{quasi-finite} if all its weight spaces are finite-dimensional.  A quasi-finite weight module is called \emph{cuspidal} (\emph{uniformly bounded}) if the dimensions of its weight spaces are uniformly bounded, i.e., there exists $N\in\bN$ such that $\dim M_\lambda\le N$ for all $\lambda\in\Supp(M)$.

 In this paper, we first determine the universal central extension $\hL=\hL(\dg,d,\sigma)$, give the classification of cuspidal $A\fL$-modules by using the weighting functor from $U(\mathfrak{h})$-free modules to weight modules. Then we give the classification of simple cuspidal $\fL$-modules by using the $A$-cover method. Finally, all simple quasi-finite modules over $\fL$ and $\hL$ are classified.

The main results of this paper are as follows (for the notations see Sections 2 and 3).

\begin{theo}\label{thm1} Let $\dg$ be a finite dimensional Lie (super)algebra, $d$ be an even derivation on $\dg$ without eigenvalue $1$, and $\sigma$ be an order $n$ automorphism of $\dg$ that commutes with $d$. Then $$\aligned H^2(\fL(\dg,d,\sigma),\bC^{1|1})_{\bar 0}\cong & H^2(\dg,\bC^{1|1})_{\bar 0}^{d,\sigma}\oplus {(\Inv(\ddot{\fg}))}^\sigma\\ &\oplus \Big(\dg/\big((d+1)\dg+[(d+\frac 12)\dg,\dg]+ [\dg,[\dg,\dg]]\big)\Big)^{\sigma}.\endaligned$$\end{theo}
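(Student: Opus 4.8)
The plan is to compute the even $2$-cocycle space of $\fL$ directly, exploiting the $\tfrac1n\bZ$-grading $\fL=\oplus_a\fL_a$ whose grading operator is $\mathrm{ad}(\fl_0)$. First I would show that every even $2$-cocycle $\psi:\fL\times\fL\to\bC^{1|1}$ is cohomologous to a homogeneous one of degree $0$, i.e. $\psi(\fL_a,\fL_b)=0$ unless $a+b=0$. This is the standard grading argument: writing $\psi=\sum_k\psi^{(k)}$ by total degree, the super-Jacobi (cocycle) identity evaluated on $\fl_0,X,Y$ forces, for $k\neq0$, the identity $\psi^{(k)}=\delta\mu_k$ with $\mu_k(X)=\tfrac1k\psi^{(k)}(\fl_0,X)$; crucially $\fl_0$ lies in $\fL$, so $\mu_k$ is an honest $1$-cochain. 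This reduces the whole computation to the degree-$0$ subcomplex and lets me normalize further using degree-$0$ coboundaries $\delta\mu$ with $\mu(\fL_a)=0$ for $a\neq0$.

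Next I would split a degree-$0$ cocycle into its three blocks $\psi|_{W\times W}$, $\psi|_{W\times\fg}$ and $\psi|_{\fg\times\fg}$ and exploit the cocycle identity on mixed triples. Evaluating on $(\fg,\fg,\fg)$ shows that $\psi|_{\fg\times\fg}$ is a $2$-cocycle of $\fg$, while the compatibility coming from triples $(\fl_i,x\otimes t^a,y\otimes t^b)$, together with the explicit brackets $[\fl_i,x\otimes t^a]=(ax+id(x))\otimes t^{i+a}$, pins down its dependence on $a$. I expect exactly two surviving shapes: a part independent of $a$, which is a $d,\sigma$-invariant $2$-cocycle of the finite-dimensional $\dg$ and produces the summand $H^2(\dg,\bC^{1|1})^{d,\sigma}_{\bar 0}$, and a part linear in $a$ governed by an invariant bilinear form. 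The latter couples to the Virasoro central term from $\psi|_{W\times W}$ (where $H^2(W,\bC)=\bC$), and the two assemble into $(\Inv(\ddg))^{\sigma}$, the $\sigma$-invariant invariant forms on the auxiliary algebra $\ddg$, which is built from $\dg$ precisely so as to carry both the $d$-twist and the Virasoro charge in a single form.

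The remaining and most delicate block is the mixed one $\psi|_{W\times\fg}$. Writing $f_i(x)=\psi(\fl_i,x\otimes t^{-i})$ for its only nonzero components and feeding the cocycle identity on triples $(\fl_i,\fl_j,x\otimes t^{-i-j})$ and $(\fl_i,x\otimes t^a,y\otimes t^{-i-a})$, I would derive recursions in $i$ for the $f_i$. Condition (1.1) enters here in an essential way: because $d-1$ is invertible on $\dg$, the operators $d+1$ and $d+\tfrac12$ arising from the $ax+id(x)$ shifts can be inverted to solve the recursions and to remove, via coboundaries, all but a finite residue. I expect the surviving classes to be parametrized, after taking $\sigma$-invariants, by $\dg$ modulo the subspace $(d+1)\dg+[(d+\tfrac12)\dg,\dg]+[\dg,[\dg,\dg]]$, which is precisely the third summand.

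Finally I would verify that the three contributions are linearly independent modulo coboundaries and together exhaust $H^2$, assembling the direct sum. The main obstacle is the mixed block: the triples $(W,W,\fg)$ and $(W,\fg,\fg)$ genuinely couple all three blocks, so one must disentangle $\psi|_{W\times\fg}$ from the Virasoro and current parts, and then carry out the recursion bookkeeping so that exactly the operators $d+1$, $d+\tfrac12$ and the two bracket images $[\dg,\dg]$ and $[\dg,[\dg,\dg]]$ cut out the stated denominator—no more and no less. Isolating precisely these three subspaces relies on the invertibility of $d-1$ at several inversion steps and on careful use of super-antisymmetry and the $\sigma$-grading throughout.
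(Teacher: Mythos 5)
Your skeleton---normalize away coboundaries using the $\frac1n\bZ$-grading, then squeeze the degree-zero blocks with cocycle identities on mixed triples---is indeed how the paper begins (Lemma \ref{lemm-5}, and the identities (\ref{d-g-1}), (\ref{g-g-1})--(\ref{o-m})), and you name the correct answer. But the step you yourself flag as where Condition (1.1) ``enters in an essential way'' contains a genuine error: invertibility of $d-1$ says nothing about $d+1$ or $d+\frac12$ (take $d=-\mathrm{id}$ or $d=0$, both allowed by (1.1)). Worse, if $d+1$ and $d+\frac12$ could be inverted as you claim, your own residue $\dg/\big((d+1)\dg+[(d+\frac 12)\dg,\dg]+[\dg,[\dg,\dg]]\big)$ would be zero, wiping out precisely the classes this summand exists to record: $W(2,2)$ arises from $d=-\mathrm{id}$ (so $d+1=0$), the twisted Heisenberg--Virasoro algebra from $d=0$, the $N=1$ BMS superalgebra from $\beta=-\frac12$ (see the examples in Section 5). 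What the paper actually does is extend $d$ to $\fL$ and to bilinear forms, decompose the normalized cocycle space $H$ into generalized $d$-eigenspaces, show that only the eigenvalues $-1,0,1$ survive (Lemma \ref{lemm-7}), and then invert operators such as $d+\frac12$ only on generalized eigenspaces where the relevant scalar is nonzero, via Lemma \ref{zero}; the failure of $(d+1)\dg$, $[(d+\frac12)\dg,\dg]$ and $[\dg,[\dg,\dg]]$ to fill $\dg$ is exactly what the degree-$(-1)$ classes measure.

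Relatedly, your block bookkeeping is off in a way the computation would punish. The classes giving the third summand are not supported on $W\times\fg$ alone: the cocycle identity on triples $(\fl_k,xt^a,yt^{-a-k})$ forces $(\pi_{-1}f)(xt^a,yt^{-a})=\frac{1-4a^2}{12}f([x,y])$, a component \emph{quadratic} in $a$, so your predicted ``exactly two surviving shapes'' (constant and linear in $a$) for $\psi|_{\fg\times\fg}$ is missing one. Moreover, the two constant-in-$a$ contributions on $\fg\times\fg$---the term $B(\partial,[x,y])$ coming from $(\Inv(\ddg))^\sigma$ in (\ref{Eq2.13}) and the term $\dot\alpha(x,y)$ coming from $(Z^2(\dg))^{d,\sigma}$ in (\ref{Eq2.14})---cannot be distinguished by $a$-degree at all; in the paper they are separated by $d$-weight ($0$ versus $1$). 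Without the generalized eigenspace decomposition with respect to the extended derivation $d$, or an equivalent substitute, your final step of verifying that the three contributions are independent and exhaust $H^2$ does not go through as planned.
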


%  \begin{theo}\label{thm2} Any simple quasi-finite $\fL(\dg,d,\sigma)$ module is  a highest weight module, a lowest weight module, or isomorphic to a simple sub-quotient of $\Gamma(V,\lambda)$ for some $\lambda\in \bC$ and finite dimensional simple $\ddg$ module $V$.\end{theo}

  \begin{theo}\label{thm2} Let $\dg$ be a finite dimensional Lie (super)algebra, $d$ be an even derivation on $\dg$ without eigenvalue $1$, and $\sigma$ be an order $n$ automorphism of $\dg$ that commutes with $d$. Then any simple quasi-finite $\hL(\dg,d,\sigma)$-module is  a highest weight module, a lowest weight module, or isomorphic to a simple sub-quotient of a loop module $\Gamma(V,\lambda)$ for some $\lambda\in \bC$ and a finite dimensional simple $\ddg$ module $V$.
   \end{theo}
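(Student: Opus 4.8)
The plan is to follow the route that has become standard for Harish--Chandra modules over $\frac1n\bZ$-graded Lie superalgebras carrying a Virasoro component: reduce the problem to a trichotomy on the shape of the support, dispose of the two extreme cases as highest/lowest weight modules, prove that the remaining (two-sided unbounded) case forces the module to be cuspidal, and finally feed this into the classification of simple cuspidal modules already obtained through the weighting functor and the $A$-cover method. Concretely, I would fix a simple quasi-finite $\hL$-module $M$. By simplicity $\Supp(M)\subseteq\lambda_0+\frac1n\bZ$ for some $\lambda_0\in\bC$, and every weight space $M_\lambda$ is finite-dimensional. Since $\fl_0$ provides the grading and the Witt subalgebra $W\subseteq\fL$ lifts to a copy of the Virasoro algebra $\Vir$ inside $\hL$, the module $M$ is in particular a quasi-finite weight $\Vir$-module, and I would split the analysis according to whether $\Supp(M)$ is bounded above, bounded below, or unbounded in both directions.

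If $\Supp(M)$ is bounded above, choose a maximal weight $\mu$; then $M_\mu$ is annihilated by $\hL_a$ for every $a>0$, and by simplicity of $M$ it is a simple module over the degree-zero subalgebra $\hL_0$ that generates $M$, so $M$ is a highest weight module. The bounded-below case is symmetric and yields a lowest weight module. Both of these outcomes already appear in the statement, so the substance of the theorem lies in the case in which $\Supp(M)$ is unbounded on both sides.

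The hard part will be to show that a two-sided unbounded $M$ is necessarily cuspidal. Restricting to $\Vir$ and using the Mathieu-type dichotomy, a simple weight $\Vir$-subquotient with two-sided unbounded support can be neither of highest nor of lowest weight type, which already excludes the non-uniformly-bounded Virasoro possibilities. The delicate step is to promote this into a uniform bound on $\dim M_\lambda$ for the full $\hL$-action, and this is exactly where the finite-dimensionality of $\dg$ (equivalently, of each graded piece $\fL_a$) enters: exploiting the loop structure of $\fg$ together with the mixing brackets $[\fl_{\pm1},x\otimes t^a]=(ax\pm d(x))\otimes t^{\pm1+a}$, one shows that suitable degree-$\pm1$ operators act injectively on all but finitely many weight spaces, forcing $\dim M_\lambda$ to stabilise and hence to be uniformly bounded. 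Once uniform boundedness is in hand, a standard trace computation on the relation $[\fl_k,\fl_{-k}]=2k\fl_0+(\text{central})$ shows that every central element of $\hL$ must act as $0$ on $M$, so the classification descends to that of simple cuspidal $\fL$-modules. I expect this boundedness step, together with controlling the interaction between the Witt part and the $\fg$ part, to be the main obstacle.

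Finally, I would feed the cuspidal module $M$ into the classification of simple cuspidal $\fL$-modules established earlier via the $A$-cover: every such module is realised as a simple subquotient of a loop module attached to a finite-dimensional simple $\ddg$-module, that is, of $\Gamma(V,\lambda)$ for some $\lambda\in\bC$ and some finite-dimensional simple $\ddg$-module $V$. Identifying $M$ with such a subquotient then completes the proof, the vanishing of the central charges established above being precisely what is needed for compatibility with the $\Gamma(V,\lambda)$ construction and with the explicit description of $\hL$ recorded in Theorem \ref{thm1}.
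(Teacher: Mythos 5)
Your overall architecture coincides with the paper's (trichotomy into highest weight, lowest weight, cuspidal; kill the centre; descend to simple cuspidal $\fL$-modules; invoke the $A$-cover and loop-module classification, i.e.\ Theorems \ref{noncuspidal}, \ref{cuspidal}, \ref{AL} and Lemma \ref{Z=0}), but both steps you yourself flag as hard are left with genuine gaps. First, the claim that two-sided unbounded support forces cuspidality is not proved by your sketch. Restricting to $\Vir$ and invoking Mathieu's dichotomy fails as stated: $M$ is not simple as a $\Vir$-module, it may have infinite length over $\Vir$, and a simple $\Vir$-subquotient need not inherit two-sided unbounded support, so nothing is ``excluded'' for $M$ itself. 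Your substitute mechanism --- that suitable degree-$\pm1$ operators act injectively on all but finitely many weight spaces, forcing $\dim M_\lambda$ to stabilise --- is precisely the statement that needs proof, and no argument is offered; the mixing brackets $[\fl_{\pm1},x\otimes t^a]=(ax\pm d(x))\otimes t^{a\pm1}$ give no injectivity on weight spaces. The paper's Theorem \ref{noncuspidal} argues along a different axis: it splits on weight multiplicities rather than on the shape of the support. If $M$ is not cuspidal, unboundedness of $\dim M_{\lambda+a}$ and a pigeonhole count on $\Hom_\bC\big(\oplus_{i=0}^{2n}\hL_{k+\frac in},\oplus_{i=0}^{3n}M_{\lambda+\frac in}\big)$ produce a weight vector $v$ with $\big(\sum_{i=0}^{2n}\hL_{k+\frac in}\big)v=0$; since these components generate $\oplus_{a\ge a_0}\fL_a$, every $u\in U(\hL)v$ is killed by $\fl_m$ for $m\gg0$, and Lemma \ref{weightupper} then bounds the support above, so $M$ is a highest (or, symmetrically, lowest) weight module. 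Some argument of this type must replace your injectivity claim.

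Second, your treatment of the centre only kills the Virasoro element $z$: a trace identity applied to $[\fl_k,\fl_{-k}]$ sees nothing of the remaining central elements. By Theorem \ref{thm1} the centre $Z$ of $\hL$ contains three further families $z_{-1,i,\bar l}$, $z_{0,j,\bar l}$, $z_{1,k,\bar l}$, occurring in the brackets $[\fl_k,xt^a]$ and $[xt^a,yt^b]$ of (\ref{hL}), as well as odd central elements. Their vanishing on a simple cuspidal module is exactly what you need before identifying $M$ with a subquotient of $\Gamma(V,\lambda)$ (on which $Z$ acts as zero), and it is not a formal trace computation: the paper's Lemma \ref{Z=0} deduces it from the Billig--Futorny differentiators $\Omega^{(m)}_{k,s}$ of Lemma \ref{Omegaoper}, carrying the central terms through the commutator manipulations to reach (\ref{zero-1}) and (\ref{zero-2}), while $Z_{\bar1}\cdot M=0$ follows from ${z'}^2=\frac12[z',z']$ and $z\cdot M=0$ from the existence of a simple cuspidal $\Vir$-subquotient. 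Without these two ingredients your text is an outline of the correct strategy rather than a proof.
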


The paper is structured as follows. In Section 2, we  determine  the universal central extension $\hL=\hL(\dg,d,\sigma)$ for the newly introduced  $(d,\sigma)$-twisted Affine-Virasoro superalgebra $\fL=\fL(\dg,d,\sigma)$, i.e., give the proof of Theorem \ref{thm1} and obtain the universal central extension $\hL$ of $\fL$.  
In Section 3, we   give the classification of simple cuspidal $A\fL$-modules  by using the weighting functor from $U(\mathfrak{h})$-free modules to weight modules.  In Section 4, by using the $A$-cover method  we   give the proof of Theorem \ref{thm2}, i.e., classify all simple   quasi-finite modules over  $\fL$ and $\hL$. These  theorems are generally   easy to apply in many cases. In Section 5, we   give some concrete examples with various $\dg$  which  we  recover and generalize    many known results. Certainly our $(d,\sigma)$-twisted Affine-Virasoro superalgebras $\hL(\dg,d,\sigma)$ will give a lot of new Lie (super)algebras in this class. We remark that  from our   Affine-Virasoro superalgebras $\hL=\hL(\dg,d,$ id) we can construct some interesting vertex operator algebras and superalgebras \cite{L1, L2}.

 \begin{section}{Universal central extensions of $\fL$}\end{section}

 In this section, we will give the proof of Theorem \ref{thm1}, i.e., determine the universal central extension $\hL$ for $\fL$. We first set up the notations.

 Let $L$ be any Lie superalgebra. A $2$-cocycle  $\alpha: L\times L\rightarrow \bC $  is a bilinear form satisfying
\begin{align*}
&\alpha(x,y)=-(-1)^{|x||y|}\alpha(y,x),\\
&\alpha(x,[y,z])=\alpha([x,y],z)+(-1)^{|x||y|}\alpha(y,[x,z]),\forall x,y,z\in L,
\end{align*}
and it is called a $2$-coboundary if there is a linear map $f:L\rightarrow \bC$ with $\alpha(x,y)=f([x,y])$ for all $x,y\in L$. Define
$$\alpha_{\bar0}(x,y)=\left\{ \begin{array}{cl}
	\alpha(x,y), &\text{ if } |x|+|y|=\bar 0, \\
		0, &\text{ if } |x|+|y|=\bar 1,\end{array} \right.$$
		
		$$\alpha_{\bar 1}(x,y)=\left\{ \begin{array}{cl}
			0, &\text{ if } |x|+|y|=\bar 0, \\
			\alpha(x,y), &\text{ if } |x|+|y|=\bar 1,
\end{array} \right.$$ that is, the element $\alpha_{\bar 0}(x,y)$ has even parity and $\alpha_{\bar 1}(x,y)$ has odd parity.

Then there is a 1-1 correspondence between $2$-cocycle $\alpha$  and the central extension $(L\oplus \bC^{1|1},[,]')$ of $L$ with brackets
$$[x,y]'=[x,y]+\big(\alpha_{\bar 0}(x,y),\alpha_{\bar 1}(x,y)\big).$$

%And for the even(odd) part of a cocycle, we have an even(odd)central extension of $L$ defined by ...

%A $2$-cocycle $\alpha$ is called even if $\alpha=\alpha_{\bar 0}$.

Denote by $\Bil(L), Z^2(L)$, and $B^2(L)$  the vector space consists of all bilinear forms, $2$-cocyles, $2$-coboundaries   on $L$, respectively. Denote the set of  supersymmetric superinvariant bilinear forms on $L$ as
$$ \Inv(L)=\{   \alpha\in \Bil(L): \alpha(x,y)=(-1)^{|x||y|}\alpha(y,x)\text{ and }\alpha([x,y],z)=\alpha(x,[y,z])\}.$$

 Then we have $H^2(L,\bC^{1|1})_{\bar 0}\cong Z^2(L)/B^2(L)$ (see for example Section 16.4 in \cite{Mu}).

Let $\jmath$ be any linear operator on a vector space $V$ such that $f(\jmath)=0$ for some $ f(t)\in \bC[t]\setminus\{0\}$. Then $V$ has a generalized  eigenspace decomposition $V=\oplus_{\lambda: f(\lambda)=0} V_{(\lambda)}$, where   $V_{(\lambda)}$ is the generalized  eigenspace of $\jmath$ with respect to the eigenvalue $\lambda$, i.e.,
the subspace consists of all $v\in V$   annihilated by some powers of $\jmath-\lambda$. And denote  $V^{\jmath}:=\{v\in V|\jmath v=v\}$.

Let $d$ be any even derivation of $L$, such that $f(d)=0$ for some $ f(t)\in \bC[t]\setminus\{0\}$. We have a linear operator, which we still denote by $d$, on $\Bil(L)$ defined by
$$ (d\alpha)(x,y)=\alpha(dx,y)+\alpha(x,dy).$$
Then $\Bil(L)_{(\lambda)}=\{\alpha\in \Bil(L)|\alpha(L_{(\mu)},L_{(\nu)})=0,\forall \mu,\nu\in \bC, \mu+\nu\ne \lambda\},$
where the generalized eigenspaces are with respect to $d$.

Clearly that $B^2(L)$ and $Z^2(L)$ are $d$-invariant, and  they have generalized  eigenspace decompositions with only finite many nonzero generalized  eigenspaces. So is $H^2(L)$.

For any automorphism $\tau$ of $L$, we have an automorphism, which we still denote by $\tau$, of $\Bil(L)$ defined by
$$(\tau \alpha)(x,y)=\alpha(\tau x,\tau y),\forall x,y\in L,\forall \alpha\in B(L).$$

Clearly, $B^2(L),Z^2(L), \Inv(L)$ are $\sigma$-invariant. Let \begin{equation}\ddot{\fg}=\bC \partial\ltimes \dg\end{equation} be the Lie superalgebra of dimension $1+\dim \dg$ with brackets $[\partial,x]=d(x),\forall x\in \dg$. The automorphism $\sigma$ is naturally extended to $\ddot{\fg}$ by $\sigma(\partial)=\partial$.

Now we are going to prove  Theorem \ref{thm1} via eight auxiliary  lemmas.

\

\begin{lemm}\label{zero}Let $V$ be a finite dimensional vector space over $\bC$, $\jmath\in \gl(V)$, $V_{j}$ be a $\jmath$-invariant subspace of the generalized  eigenspace $V_{(\lambda_j)}$ for $j=1,2$,  $B: V_{1}\times V_{2}\rightarrow \bC $ be a bilinear map, and $f_i(t), g_i(t)\in \bC[t],i=1,2,\ldots,s$. If $\sum_{i=1}^s f_i(\lambda_1)g_i(\lambda_2)\ne 0$ and  $\sum_{i=1}^s B(f_i(\jmath)v_1,g_i(\jmath)v_2)=0$ for all $v_j\in V_j$, then $B=0$.\end{lemm}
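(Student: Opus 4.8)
The plan is to reduce the polynomial operators $f_i(\jmath)$ and $g_i(\jmath)$ to their Taylor expansions about the eigenvalues $\lambda_1,\lambda_2$ and then run a descending induction on nilpotency degrees. Since $V_j$ is a $\jmath$-invariant subspace of the generalized eigenspace $V_{(\lambda_j)}$, the operator $N_j:=(\jmath-\lambda_j)|_{V_j}$ is nilpotent; fix $m_j$ with $N_j^{m_j}=0$. Writing $\jmath=\lambda_j+N_j$ on $V_j$ and Taylor-expanding each polynomial, I obtain the finite expansions
\[
f_i(\jmath)v_1=\sum_{k\ge 0}\frac{f_i^{(k)}(\lambda_1)}{k!}N_1^k v_1,\qquad
g_i(\jmath)v_2=\sum_{l\ge 0}\frac{g_i^{(l)}(\lambda_2)}{l!}N_2^l v_2,
\]
for $v_1\in V_1,\ v_2\in V_2$. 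Substituting into the hypothesis $\sum_i B(f_i(\jmath)v_1,g_i(\jmath)v_2)=0$ and using bilinearity, the relation becomes $\sum_{k,l\ge 0}c_{kl}B(N_1^k v_1,N_2^l v_2)=0$ for all $v_1,v_2$, where $c_{kl}=\sum_i\frac{f_i^{(k)}(\lambda_1)}{k!}\frac{g_i^{(l)}(\lambda_2)}{l!}$. The crucial observation is that the constant coefficient is exactly $c_{00}=\sum_i f_i(\lambda_1)g_i(\lambda_2)\ne 0$ by hypothesis.

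Next I would replace $v_1$ by $N_1^{a}v_1$ and $v_2$ by $N_2^{b}v_2$ in this relation, producing for every $a,b\ge 0$
\[
c_{00}\,B(N_1^a v_1,N_2^b v_2)=-\sum_{(k,l)\ne(0,0)}c_{kl}\,B(N_1^{a+k}v_1,N_2^{b+l}v_2),
\]
where every term on the right has strictly larger total exponent $(a+k)+(b+l)>a+b$. This sets up a descending induction on $a+b$. In the base case, once $a\ge m_1$ or $b\ge m_2$ the left-hand quantity vanishes because $N_1^{m_1}=0$ and $N_2^{m_2}=0$; since only exponents below $m_1$ and $m_2$ can contribute, all sufficiently large total exponents are covered. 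For the inductive step, assuming $B(N_1^{a'}v_1,N_2^{b'}v_2)=0$ for all $a'+b'>a+b$, the right-hand side vanishes, and dividing by $c_{00}\ne 0$ gives $B(N_1^a v_1,N_2^b v_2)=0$. Taking $a=b=0$ yields $B(v_1,v_2)=0$ for all $v_1,v_2$, i.e.\ $B=0$.

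The step requiring the most care is verifying that the Taylor expansion truncates correctly and that substituting the nilpotent-shifted vectors strictly increases the total exponent, so that the descending induction is genuinely well-founded; the remainder is bookkeeping. The single essential input is that the scalar $c_{00}$ coincides with the nonvanishing sum $\sum_i f_i(\lambda_1)g_i(\lambda_2)$, which is precisely what makes the recursion invertible at each stage.
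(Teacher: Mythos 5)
Your proof is correct, and it is a genuinely different implementation from the paper's. The paper argues asymmetrically: it takes the maximal $\jmath$-invariant subspace $W_1\subseteq V_1$ with $B(W_1,V_2)=0$, picks $v\in V_1\setminus W_1$ with $(\jmath-\lambda_1)v\in W_1$ (so that $f_i(\jmath)v\equiv f_i(\lambda_1)v \pmod{W_1}$ --- in effect only the zeroth-order term of your Taylor expansion), and then inverts the \emph{operator} $\sum_i f_i(\lambda_1)g_i(\jmath)$ on $V_2$, which is bijective because its sole eigenvalue there is $\sum_i f_i(\lambda_1)g_i(\lambda_2)\ne 0$; this yields $B(v,V_2)=0$, contradicting maximality. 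You instead expand fully on \emph{both} sides in the nilpotent parts $N_j=(\jmath-\lambda_j)|_{V_j}$ and run a symmetric descending induction on the total exponent $a+b$, inverting only the scalar $c_{00}=\sum_i f_i(\lambda_1)g_i(\lambda_2)$. The paper's route is shorter, outsourcing work to the spectral fact about $\sum_i f_i(\lambda_1)g_i(\jmath)$; yours is more explicit and self-contained (no maximal-subspace step, no operator inversion), and, since for fixed $v_1,v_2$ only local nilpotency of $\jmath-\lambda_j$ at those vectors is used, it extends verbatim to the locally finite infinite-dimensional setting, whereas the paper's argument as written leans on finite dimensionality. All your steps check out: the Taylor identity $f_i(\lambda_j+N_j)=\sum_k \frac{f_i^{(k)}(\lambda_j)}{k!}N_j^k$ is exact for polynomials, the substitution $v_1\mapsto N_1^a v_1$, $v_2\mapsto N_2^b v_2$ is legitimate because the $V_j$ are $\jmath$-invariant, and the base case is sound since $a+b\ge m_1+m_2-1$ forces $a\ge m_1$ or $b\ge m_2$.
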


\begin{proof}Let $W_1$ be the maximal $\jmath$-invariant subspace of $V_1$ with $B(W_1,V_2)=0$. If $V_1\ne W_1$, we may choose $v\in V_1\backslash W_1$ such that $(\jmath-\lambda_1)v\in W_1$. Then $$0=\sum_{i=1}^sB(f_i(\jmath)v,g_i(\jmath)v_2)=\sum_{i=1}^sB(f_i(\lambda_1)v,g_i(\jmath)v_2)= B\big(v,\sum_{i=1}^s f_i(\lambda_1)g_i(\jmath)v_2\big),\forall v_2\in V_2.$$ From $\sum_{i=1}^s f_i(\lambda_1)g_i(\lambda_2)\ne 0$, we know that  $\sum_{i=1}^s f_i(\lambda_1)g_i(\jmath)$ acts injectively hence bijectively on $V_2$. Then $B(v,V_2)=0$.  Now $W'=\bC v+ W_1$ is a $\jmath$-invariant subspace of $V_1$ with  $B(W', V_2)=0,\forall v_1\in W',v_2\in V_2$, a contradiction. Thus $W=V_1$ and $B=0$.\end{proof}

From now on, let $\dg$ be a finite dimensional Lie (super)algebra, $d$ be an even derivation on $\dg$, and $\sigma$ be an order $n$ automorphism of $\dg$ that commutes with $d$, and $\fL=\fL(\dg,d,\sigma)$.

Denote $x\otimes t^a\in\fL$ by $xt^a$ or $x(a)$ for short for any $a\in \frac 1n \bZ$. We extend $d,\sigma$ to a derivation and an automorphism, which we still denote by $d,\sigma$, of $\fL$ by
\begin{equation}\label{d-sigma}
\aligned d(\fl_i)=&0,  \,\,\,d(x t^a)=d(x) t^a;\\ \sigma(\fl_i)=&\fl_i, \,\,\,\sigma(xt^a)=\sigma(x)t^a.\endaligned\end{equation}

Let $H:=\left\{\alpha\in Z^2(\fL)|\alpha(\fl_1,\fL_{-1})=\alpha(\fl_0,\fL_a)=0, \forall a\in \frac 1n \bZ\setminus\{ 0\}\right\}$. Then we have the generalized eigenspace $H_{(\mu)}$ for any $ \mu\in \bC$ with respect to $d$. We will study properties of the space $H$ in the next three lemmas.

\begin{lemm}\label{lemm-5} \begin{itemize}
		\item [(a).] $H$ is a super subspace of $Z^2(\fL)$ that is $d$-invariant and $\sigma$-invariant.
		\item [(b).] $Z^2(\fL)=H\oplus B^2(\fL)$. \end{itemize}\end{lemm}

\begin{proof}  (a). This is easy to verify.
	
	(b). For any $\alpha\in H\cap B^2(\fL)$,  there exists $f\in \fL^*$ such that $\alpha(u,v)=f([u,v]),\forall u,v\in \fL$. Then $$\aligned &f(\fl_0)=-\frac 12f([\fl_1,\fl_{-1}])=-\frac 12\alpha(\fl_1,\fl_{-1})=0,\\
&	f(xt^0)=f([\fl_1,(d-1)^{-1}xt^{-1}])=\alpha\big(\fl_1,(d-1)^{-1}xt^{-1}\big)=0,\\
	&f(u)=\frac 1a f([\fl_0,u])=\frac 1a \alpha(\fl_0,u)=0,\forall u\in \fL_{a}\text{ with }a\ne 0.\endaligned$$
	So $f=0$. Thus $\alpha=0$, that is, the sum of $H$ and $B^2(\fL)$ is direct. Now for any $\alpha\in Z^2(\fL)$, we have $f\in \fL^*$ defined by
	$$\aligned &f(\fl_0)=-\frac 12\alpha(\fl_1,\fl_{-1}),\\ &f(xt^0)=\alpha(\fl_1,(d-1)^{-1}xt^{-1}),\\
	 &f(u)=\frac 1a \alpha(\fl_0,u),\forall u\in \fL_a\text{ with }a\ne 0.\endaligned $$
	Then we have $\beta\in B^2(\fL)$ defined by $\beta(u,w)=f([u,w])$, and $\alpha-\beta\in H$, i.e., $Z^2(\fL)=H+B^2(\fL)$. So we have proved that $Z^2(\fL)=H\oplus B^2(\fL)$.
\end{proof}

\begin{lemm} \label{lemm-6} Let  $\alpha\in H, i\in \bZ$.
	\begin{itemize}
		\item[(a).]
 We have \begin{equation}\label{d-g-4}\alpha(\fl_{i},xt^{-i})=\alpha\left(\fl_2,\Big((\binom i2-\binom {i+1}3)d+\binom i2\Big)xt^{-2}\right).\end{equation}
 	\item[(b).]
$ \alpha(\fl_2,(d^2+d)(x)t^{-2})=0,\forall x\in \dg.$
 	\item[(c).]   $\alpha(\fl_{i},xt^{-i})=0$ if $xt^{-i}\in \fL_{(\mu)}$ with $\mu\ne0, -1$.
 		\item[(d).] $\alpha(\fL_a,\fL_b)=0,$ if $ a+b\ne 0$.
 		\end{itemize}\end{lemm}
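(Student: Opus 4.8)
The plan is to squeeze all four assertions out of the $2$-cocycle identity
\[\alpha(x,[y,z])=\alpha([x,y],z)+(-1)^{|x||y|}\alpha(y,[x,z])\]
by feeding it the triples $(\fl_0,u,v)$ and $(\fl_{\pm1},\fl_j,xt^{-i})$, chosen so that the defining vanishings $\alpha(\fl_1,\fL_{-1})=0$ and $\alpha(\fl_0,\fL_a)=0$ $(a\ne0)$ of $H$ kill one term. I abbreviate $\phi_i(x):=\alpha(\fl_i,xt^{-i})$, a linear functional on $\dg_{[0]}$ (the only component for which $xt^{-i}\in\fL$ when $i\in\bZ$); since every $\fl_j$ is even, no parity signs intervene below. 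Part (d) falls out immediately: with $x=\fl_0$, $y=u\in\fL_a$, $z=v\in\fL_b$ the identity reads $\alpha(\fl_0,[u,v])=(a+b)\alpha(u,v)$, and for $a+b\ne0$ the left side is $0$ because $[u,v]\in\fL_{a+b}$ and $\alpha(\fl_0,\fL_{a+b})=0$.

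Next I would set up the basic recursion for (a) by applying the identity to $(\fl_1,\fl_{i-1},xt^{-i})$. As $[\fl_{i-1},xt^{-i}]\in\fL_{-1}$, the term $\alpha(\fl_1,\cdot)$ drops, leaving
\[(i-2)\phi_i(x)=-\phi_{i-1}\big((d-i)x\big),\qquad i\in\bZ.\]
Putting $i=1$ and using $\phi_1=0$ gives $\phi_0\big((d-1)x\big)=0$; this is exactly where Condition (1.1) enters, for $d-1$ is bijective, so $\phi_0=0$. Running the recursion upward then determines each $\phi_i$ $(i\ge2)$ from $\phi_2$ by induction, and a short computation identifies the result with $\phi_2\circ c_i(d)$, where $c_i(d)=(\binom i2-\binom{i+1}3)d+\binom i2$ — but the induction step forces a $d^2$ into the argument of $\phi_2$, so it closes only once one may rewrite $\phi_2\circ d^2=-\phi_2\circ d$, i.e. only once (b) is known.

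I would therefore prove (b) before finishing (a), using the companion recursion from $(\fl_{-1},\fl_{i+1},xt^{-i})$,
\[\phi_{-1}\big(((i+1)d-i)x\big)=(i+2)\phi_i(x)-\phi_{i+1}\big((d+i)x\big).\]
Its $i=0$ instance gives $\phi_{-1}\circ d=0$, and its $i=1$ instance, after inserting $\phi_1=0$ and $\phi_{-1}\circ d=0$, gives $\phi_{-1}=\phi_2\circ(d+1)$ — which is precisely (a) at $i=-1$. Composing these two yields $0=\phi_{-1}(dx)=\phi_2\big((d^2+d)x\big)$, establishing (b); the upward induction above then completes (a) for all $i\ge2$.

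Finally I would split $\dg=\oplus_\mu\dg_{(\mu)}$ into generalized $d$-eigenspaces. On $\dg_{(\mu)}$ with $\mu\ne0,-1$ the operator $d^2+d$ is invertible, so (b) together with Lemma~\ref{zero} — applied to the pairing $(\fl_2,xt^{-2})$ with $\jmath=d$, eigenvalues $0$ and $\mu$, and the single pair $f=1,\ g(t)=t^2+t$ of nonzero symbol $\mu(\mu+1)$ — forces $\phi_2|_{\dg_{(\mu)}}=0$; propagating this through the two recursions (which preserve each $\dg_{(\mu)}$ and on which every $d-c$, $c\ne\mu$, is invertible) yields $\phi_i|_{\dg_{(\mu)}}=0$ for all $i$, which is (c). By (c) the still-open cases of (a), namely $i\le-2$, reduce to the subspace $\dg_{(0)}\oplus\dg_{(-1)}$, where $d$ (resp. $d+1$) is invertible, and there I would run the recursion downward from the known values at $i=-1,0,1$. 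I expect the real difficulty to be twofold: (b) itself, being the one input invisible to the $\fl_1$-recursion and recoverable only from the $\fl_{-1}$-recursion through Condition (1.1); and the bookkeeping in the range $i\le-2$, where the factors $i-2$, $i+2$ and the shift $d-i$ each degenerate at isolated indices or on the boundary eigenspaces $\dg_{(0)},\dg_{(-1)}$ — at those spots the $\fl_{\pm1}$-recursions must be supplemented by the one from $(\fl_2,\fl_{i-2},xt^{-i})$ (whose $i=0$ instance, for example, pins down $\phi_{-2}$ on $\dg_{(-1)}$). This case analysis, rather than any one computation, is where the care will go.
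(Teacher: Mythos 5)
Your proposal is correct, and part (d) and the upward recursion $(i-2)\phi_i(x)=-\phi_{i-1}((d-i)x)$ (the $j=1$ specialization of the paper's identity (2.5)) coincide with the paper's argument; but you reach (b) and the negative-index half of (a) by a genuinely different route. For (b), the paper stays inside the upward recursion: it takes the $j=2$, $i=3$ instance of (2.5), inserts the closed formula $\phi_i=(-1)^{i-2}\phi_2\circ\binom{d-3}{i-2}$, obtains $\alpha(\fl_2,(d^3-d)xt^{-2})=0$, and divides out the bijective factor $d-1$; you instead exploit the $\fl_{-1}$ companion recursion to get $\phi_0=0$, $\phi_{-1}\circ d=0$ and $\phi_{-1}=\phi_2\circ(d+1)$, and compose. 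Both uses of Condition (1.1) are legitimate, and your route hands you (a) at $i=-1$ for free. For $i\le 0$ in (a), the paper has a one-stroke trick that avoids all case analysis: given $j\le0$ it picks an auxiliary $i$ with $i+j\ge2$ and $1+\frac ji$ not an eigenvalue of $d$ (possible since $d$ has finitely many eigenvalues), so $i+j-id$ is bijective and a single application of (2.5) together with the settled cases $i\ge2$ determines index $j$ --- in particular the paper never needs (c) to finish (a). Your plan instead proves (c) first by propagating $\phi_2|_{\dg_{(\mu)}}=0$ ($\mu\ne0,-1$) through the two recursions, then descends on $\dg_{(0)}\oplus\dg_{(-1)}$ with explicit patches at the degenerate step; this does work --- your patch from $(\fl_2,\fl_{-2},xt^{0})$ gives $\phi_{-2}=-\phi_2$ on $\dg_{(-1)}$, which agrees with $c_{-2}(d)=4d+3$ once one reduces modulo $d^2+d$ using invertibility of $d$ there --- but it buys you the extra bookkeeping you anticipate, including the (routine but unstated) congruence $(1-i)c_{i+1}(d)\equiv c_i(d)(d-i-1)\pmod{d^2+d}$ needed to close the downward induction. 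One small slip: you say $d$ (resp.\ $d+1$) is invertible on $\dg_{(0)}$ (resp.\ $\dg_{(-1)}$); it is exactly the other way around, since $d$ is nilpotent on $\dg_{(0)}$ and $d+1$ on $\dg_{(-1)}$ --- your actual uses of invertibility are consistent with the correct pairing, so this is a wording error, not a gap.
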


\begin{proof} (a). From $\alpha([\fl_i,\fl_j],xt^{-i-j})=\alpha([\fl_i,x t^{-i-j}],\fl_j)+\alpha(\fl_i,[\fl_j,x t^{-i-j}])$, we have
 \begin{equation}\label{d-g-1} (j-i)\alpha(\fl_{i+j},x t^{-i-j})=\alpha(\fl_j,(i+j-id)x t^{-j})+\alpha(\fl_i,(jd-i-j)(x) t^{-i}).\end{equation}

Taking $j=1$ in (\ref{d-g-1}), we have $(1-i)\alpha(\fl_{i+1},xt^{-i-1})=\alpha(\fl_i,(d-i-1)(x)t^{-i})$, i.e., $\alpha(\fl_{i},xt^{-i})=-\alpha(\fl_{i-1},\frac{d-i}{i-2}(x)t^{1-i})$. Then
\begin{equation}\label{d-g-2}\alpha(\fl_{i},xt^{-i})=(-1)^{i-2}\alpha(\fl_2,\binom{d-3}{i-2}xt^{-2}),\forall i\ge 2,\end{equation}
where $\binom{t}i:=\frac{t(t-1)\cdots(t-i+1)}{i!}, \binom{t}{0}:=1$.

Taking $j=2,i=3$ in (\ref{d-g-1}), together with (\ref{d-g-2}), we deduce that  $$-(-1)^5\alpha(\fl_2, \binom{d-3}{3}(x)t^{-2})=\alpha(\fl_2,(5-3d)(x)t^{-2})+(-1)^3 \alpha(\fl_2,\binom{d-3}{1}(2d-5)(x)t^{-2}),$$ i.e., $\alpha(\fl_2,(d^3-d)(x)t^{-2})=0$.
Since we have assumed that $1$ is not an eigenvalue, we get \begin{equation}\label{d-g-3}\alpha(\fl_2,(d^2+d)(x)t^{-2})=0,\forall x\in \dg.\end{equation}

Note that $(-1)^{i-2}\binom{d-3}{i-2}\equiv (\binom i2-\binom {i+1}3)d+\binom i2 \,\,\pmod{  d^2+d},\forall i\ge 2$.

From (\ref{d-g-2}) and (\ref{d-g-3}), we have (\ref{d-g-4}) hold for any $i\ge 2$. It is clear that (\ref{d-g-4}) holds for $i=1$.

%{\bf Claim 1}. (\ref{d-g-4}) holds for all $i\in \bZ$.

Now for any  $j\le 0$, we may choose $i$ such that $i+j\ge 2$ and $1+\frac ji$ is not an eigenvalue of $d$, then from (\ref{d-g-1},\ref{d-g-3}), we get
\begin{equation*}\begin{aligned}\alpha(&\fl_j,(i+j-id)(x) t^{-j})\\ &-\alpha\Big (\fl_2,((\binom j2-\binom {j+1}3)d+\binom j2)(i+j-id)(x)t^{-2}\Big)\\ =&(j-i)\alpha\big(\fl_{i+j},x t^{-i-j})-\alpha(\fl_i,(jd-i-j)(x) t^{-i}\big)\\&-\alpha\big(\fl_2,((\binom j2-\binom {j+1}3)d+\binom j2)(i+j-id)(x)t^{-2}\big)\\
=&(j-i)\alpha\big(\fl_2,((\binom {i+j}2-\binom {i+j+1}3)d+\binom {i+j}2)(x)t^{-2}\big)\\ &+\alpha\big(\fl_2,((\binom i2-\binom {i+1}3)d+\binom i2)(i+j-jd)(x)t^{-2}\big)\\
 &-\alpha\big(\fl_2,((\binom j2-\binom {j+1}3)d+\binom j2)(i+j-id)(x)t^{-2}\big)\\
 =&0. \end{aligned}\end{equation*}
 Therefore (\ref{d-g-4}) holds for all $i$.

 (b). This is (\ref{d-g-3}).

 (c). This follows from (\ref{d-g-4}) and (\ref{d-g-3}).

 (d). For any $\alpha\in H$,   $a,b\in \frac 1n \bZ$ with $a+b\ne0$, from $$0=\alpha(\fl_0,[u,v])=\alpha([\fl_0,u],v)+\alpha(u,[\fl_0,v])=(a+b)\alpha(u,v)\forall u\in \fL_a,v\in \fL_b,$$ we know that $\alpha(\fL_a,\fL_b)=0,$ if $ a+b\ne 0$.
 \end{proof}
%Thus  $(-2-\frac{-8+2}{3})d-2=-2$

Let  $\alpha\in H$. From $\alpha(\fl_{k},[xt^{a-k},yt^{-a}])=\alpha([\fl_{k},xt^{a-k}],yt^{-a})+\alpha(xt^{a-k},[\fl_{k},yt^{-a}])$, we deduce that
\begin{equation}\label{g-g-1}\alpha((a+k(d-1))xt^a,yt^{-a})=\alpha(xt^{a-k},(a-kd)yt^{k-a})+\alpha(\fl_{k},[x,y]t^{-k}).\end{equation}
Replacing $k$ with $2k$, we have $$\alpha((a+2k(d-1))xt^a,yt^{-a})=\alpha(xt^{a-2k},(a-2kd)(y)t^{2k-a})+\alpha(\fl_{2k},[x,y]t^{-2k}).$$
Replacing $y$ with $(a-k-kd)(a-kd)y$, we get
\begin{equation}\label{eq-1}\begin{aligned}\alpha\big((a+2k(d-1))&xt^a,(a-k-kd)(a-kd)yt^{-a}\big)\\ =&\alpha\big(xt^{a-2k},(a-k-kd)(a-kd)(a-2kd)(y)t^{2k-a}\big)\\
		&+\alpha\big(\fl_{2k},[x,(a-k-kd)(a-kd)y]t^{-2k}\big).\end{aligned}\end{equation}
Substituting $x$ with $(a+k(d-2)) x$ in
 $(\ref{g-g-1})$ and then using $(\ref{g-g-1})$ with $a$ replaced by $a-k$ and $y$ replaced by $(a-kd)y$, we have $$\aligned \alpha((a+&k(d-2))(a+k(d-1))xt^a,yt^{-a})\\=&\alpha((a+k(d-2))xt^{a-k},(a-kd)yt^{k-a})+\alpha(\fl_{k},[(a+k(d-2))x,y]t^{-k})\\
=&\alpha(xt^{a-2k},(a-k-kd)(a-kd)(y)t^{2k-a})+\alpha(\fl_{k},[x,(a-kd)y]t^{-k})\\
&+\alpha(\fl_{k},[(a+k(d-2))x,y]t^{-k}).\endaligned $$ Replacing $y$ with $(a-2kd)y,$ we obtain
\begin{equation}\label{eq-2}\begin{aligned}\alpha((a&+k(d-2))(a+k(d-1))xt^a,(a-2kd)yt^{-a})\\
=&\alpha(xt^{a-2k},(a-k-kd)(a-kd)(a-2kd)yt^{2k-a})\\&+\alpha(\fl_{k},[x,(a-2kd)(a-kd)y]t^{-k})+\alpha(\fl_{k},[(a+k(d-2))x,(a-2kd)y]t^{-k}).\end{aligned}\end{equation}
Equation (\ref{eq-1}) minus Equation (\ref{eq-2}) gives
\begin{equation}\label{o-m}\begin{aligned}\alpha((&a+2k(d-1))xt^a,(a-k-kd)(a-kd)yt^{-a})\\& -\alpha((a+k(d-2))(a+k(d-1))xt^a,(a-2kd)yt^{-a})\\
=&-\alpha(\fl_{k},[x,(a-2kd)(a-kd)y]t^{-k}) -\alpha(\fl_{k},[(a+k(d-2))x,(a-2kd)y]t^{-k})
\\&+\alpha(\fl_{2k},[x,(a-k-kd)(a-kd)y]t^{-2k}).\end{aligned}\end{equation}
The coefficient of $k^3$ in the left hand side of equation (\ref{o-m}) is
\begin{equation}\label{lhd}\begin{aligned} \alpha(2(d-1)&xt^a, d(d+1)yt^{-a})-\alpha((d-2)(d-1)xt^a,-2dyt^{-a})\\
&=2(\alpha((d-1)xt^a, (d+1)dyt^{-a})+\alpha((d-2)(d-1)xt^a,dyt^{-a}))\\
&=2\Big(\alpha((d-1)xt^a, (d-\frac 12)dyt^{-a})+\alpha((d-\frac 12)(d-1)xt^a,dyt^{-a})\Big).\end{aligned}\end{equation}

%\begin{equation}\begin{aligned}&\alpha((a+2k(d-1))xt^a,(a-k-kd)(a-kd)yt^{-a})\\
%&=\alpha((a+k(d-2))(a+k(d-1))xt^a,(a-2kd)yt^{-a})-\alpha(\fl_{2},((\binom k2-\binom {k+1}3)d+\binom k2)[x,(a-2kd)(a-kd)y]t^{-2})\\
%&-\alpha(\fl_{2},((\binom k2-\binom {k+1}3)d+\binom k2)[(a+k(d-2))x,(a-2kd)y]t^{-2})
%+\alpha(\fl_{2},((\binom {2k}2-\binom {2k+1}3)d+\binom {2k}2)[x,(a-k-kd)(a-kd)y]t^{-2})\end{aligned}\end{equation}

%Comparing the coefficient of $k^5$, we have $\alpha(\fl_2,([x,d^2y]-[(d-2)x,dy]-4[x,(d+1)dy])t^{-2})=\alpha(\fl_2, -d[x,dy]-2[x,d^2y]-2[x,dy])=0$

%$\alpha((a+k(d-2))(a+k(d-1))xt^a,(a-2kd)yt^{-a})
%=\alpha(xt^{a-2k},(a-k-kd)(a-kd)(a-2kd)(y)t^{2k-a})+\alpha(\fl_{k},[x,(a-2kd)(a-kd)y]t^{-k})+\alpha(\fl_{k},[(a+k(d-2))x,(a-2kd)y]t^{-k})$.

%For any $\alpha\in H', c\in \bC$, define $\alpha_\mu\in H'$ by $\alpha_\mu(u,v)=\delta_{a+b,c}\alpha(u,v),\forall u\in \fL_{(a)}, v\in \fL_{(b)}}$. Then $\alpha_\mu=0$ for all but finite many $c\in \bC$ and $\alpha=\sum_{c\in \bC}\alpha_\mu$.

\begin{lemm}\label{lemm-7} We have the following vector space decomposition $H=H_{(-1)}\oplus H_{(0)}\oplus H_{(1)}$. \end{lemm}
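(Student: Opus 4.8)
The plan is to show that every generalized $d$-eigenvalue occurring in $H$ lies in $\{-1,0,1\}$. Since $H$ is $d$-invariant (Lemma \ref{lemm-5}(a)) and $Z^2(\fL)=H\oplus B^2(\fL)$ (Lemma \ref{lemm-5}(b)), and $Z^2(\fL),B^2(\fL)$ have only finitely many nonzero generalized eigenspaces, $H$ inherits a finite decomposition $H=\bigoplus_{\mu}H_{(\mu)}$. Thus it suffices to fix $\lambda\notin\{-1,0,1\}$ and $\alpha\in H_{(\lambda)}$ and prove $\alpha=0$.

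First I would record which components of $\alpha$ can survive. Because $\alpha$ lies in the generalized $\lambda$-eigenspace, $\alpha(\fL_{(\mu)},\fL_{(\nu)})=0$ whenever $\mu+\nu\neq\lambda$. Since $d$ acts as $0$ on each $\fl_i$, the pairings $\alpha(\fl_i,\fl_j)$ sit in $H_{(0)}$ and so vanish (as $\lambda\neq0$), while $\alpha(\fl_i,xt^{-i})$ with $x\in\dg_{(\mu)}$ sits in $H_{(\mu)}$ and can be nonzero only for $\mu=\lambda$; but that is excluded by Lemma \ref{lemm-6}(c) since $\lambda\neq0,-1$. Hence $\alpha(\fl_k,\cdot)=0$ on all of $\fL$. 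By Lemma \ref{lemm-6}(d) the only remaining pairings are $\alpha(xt^a,yt^{-a})$, so it remains to show these vanish.

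Now, because $\alpha(\fl_k,\cdot)=0$, the entire right-hand side of (\ref{o-m}) is zero, whence its left-hand side vanishes identically in $k$; in particular the coefficient of $k^3$ computed in (\ref{lhd}) gives, for all $x,y\in\dg$ and all $a$,
$$\alpha\big((d-1)xt^a,(d-\tfrac12)d\,yt^{-a}\big)+\alpha\big((d-\tfrac12)(d-1)xt^a,d\,yt^{-a}\big)=0.$$
Writing $B_a(x,y):=\alpha(xt^a,yt^{-a})$ and restricting to $x\in\dg_{(\mu)}$, $y\in\dg_{(\nu)}$ with $\mu+\nu=\lambda$, I would apply Lemma \ref{zero} with $\jmath=d$ and the polynomials $f_1=t-1$, $g_1=(t-\tfrac12)t$, $f_2=(t-\tfrac12)(t-1)$, $g_2=t$. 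The relevant scalar is $f_1(\mu)g_1(\nu)+f_2(\mu)g_2(\nu)=\nu(\mu-1)(\mu+\nu-1)=\nu(\mu-1)(\lambda-1)$, which is nonzero whenever $\nu\neq0$ (recall $\mu\neq1$ since $1$ is not an eigenvalue of $d$, and $\lambda\neq1$ by assumption). Lemma \ref{zero} then forces $B_a=0$ on $\dg_{(\mu)}\times\dg_{(\nu)}$.

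The one remaining case is $\nu=0$, where the scalar degenerates; this is the main delicacy. Here $\mu=\lambda\neq0$, and I would use the super-antisymmetry $\alpha(xt^a,yt^{-a})=-(-1)^{|x||y|}\alpha(yt^{-a},xt^a)$ together with the same identity applied with $a$ replaced by $-a$. Now the first slot carries eigenvalue $\nu$ and the scalar becomes $\mu(\nu-1)(\lambda-1)\neq0$ (as $\mu\neq0$, $\nu\neq1$, $\lambda\neq1$), so Lemma \ref{zero} yields $\alpha(yt^{-a},xt^a)=0$ and hence $B_a=0$ again. Since $\mu+\nu=\lambda\neq0$ never permits $\mu=\nu=0$, at least one of the two applications always succeeds, giving $\alpha(xt^a,yt^{-a})=0$ for all pairs. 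Combined with the vanishing of every $\fl$-involving pairing, this shows $\alpha=0$, so $H_{(\lambda)}=0$ for $\lambda\notin\{-1,0,1\}$ and therefore $H=H_{(-1)}\oplus H_{(0)}\oplus H_{(1)}$.
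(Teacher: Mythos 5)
Your proposal is correct and takes essentially the same route as the paper: decompose $H$ into generalized $d$-eigenspaces, observe that all pairings involving the $\fl_k$ vanish for $\alpha\in H_{(\lambda)}$ with $\lambda\neq0,\pm1$ by Lemma \ref{lemm-6}, extract the coefficient of $k^3$ from (\ref{o-m}) via (\ref{lhd}), and finish with Lemma \ref{zero}, using (super)symmetry to swap the slots in the degenerate case where the second eigenvalue is $0$. The only cosmetic difference is that you keep the factor $d-1$ inside the polynomials fed to Lemma \ref{zero} (picking up the harmless extra factor $\lambda-1$ in the scalar), whereas the paper first cancels $d-1$ using its bijectivity on $\dg$.
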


\begin{proof} For any $\alpha\in H$, write $\alpha=\sum_{\mu \in \bC} \alpha_\mu$ with $\alpha_\mu\in H_{(\mu)}$. Applying (\ref{o-m}) to $\alpha_\mu$ with $\mu \ne 0,1,-1$, and using Lemma \ref{lemm-6}(b) we have $$\begin{aligned} \alpha_\mu&\big((a+2k(d-1))xt^a,(a-k-kd)(a-kd)yt^{-a}\big)\\ -&\alpha_\mu\big((a+k(d-2))(a+k(d-1))xt^a,(a-2kd)yt^{-a}\big)=0,  \forall k.\end{aligned}$$
So the coefficient of $k^3$ is zero. From (\ref{lhd}), we see that  $$\alpha_\mu\big((d-1)xt^a, (d-\frac 12)dyt^{-a}\big)+\alpha_\mu\big((d-\frac 12)(d-1)xt^a,dyt^{-a}\big)=0.$$
Then $\alpha_\mu\big(xt^a, (d-\frac 12)dyt^{-a}\big)+\alpha_\mu((d-\frac 12)xt^a,dyt^{-a})=0$.

Now for any $\lambda\ne 0$, Applying Lemma \ref{zero} to
$$\begin{aligned} B:&(\dg_{[na]})_{(\mu-\lambda)}\times (\dg_{[-na]})_{(\lambda)}\rightarrow \bC,\\  &(x,y)\mapsto \alpha_\mu(xt^a,yt^{-a}),\end{aligned}$$  we know that $B=0$,  i.e., $\alpha_\mu(\fg_{(\mu-\lambda)},\fg_{(\lambda)})=0$. And from $\alpha_\mu(\fg_{(0)},\fg_{(\mu)})=0$ we know that  $\alpha_\mu(\fg_{(\mu)},\fg_{(0)})=0$. So we have proved $\alpha_\mu=0,$ for all $\mu\ne 0,1,-1$. Hence $H=H_{(-1)}\oplus H_{(0)}\oplus H_{(1)}$. \end{proof}

We will completely determine  the spaces $H_{(-1)}, H_{(0)}, H_{(1)}$ in the next four lemmas.

\begin{lemm} \label{lemm-8}We have the following vector space monomorphisms:
\begin{itemize}
	\item[(1).]
 $\pi_{-1}: \left (\Big (\dg\Big/\left ((d+1)\dg+[(d+\frac 12)\dg,\dg]+[\dg,[\dg,\dg]]\Big)\right)^{\sigma}\right)^*\rightarrow H_{(-1)}$ defined by
\begin{equation}\label{Eq2.12}\begin{aligned}&(\pi_{-1}f)(\fl_i,\fl_j)=0, \,\,(\pi_{-1}f)(\fl_i,xt^a)=\delta_{i+a,0}\frac{i^3-i}6 f(x),\\ &(\pi_{-1}f)(xt^a,yt^{b})=\delta_{a+b,0}\frac{1-4a^2}{12}f([x,y]),\forall i,j\in \bZ, xt^a,yt^b\in \fg;\end{aligned}\end{equation}

	\item[(2).]$\pi_0: {(\Inv(\ddot{\fg}))}^\sigma \rightarrow H_{(0)}$ defined by
\begin{equation}\label{Eq2.13}\begin{aligned}&(\pi_0B)(\fl_i,\fl_j)=\delta_{i+j,0}\frac{i^3-i}{12} B(\partial,\partial),\,\,(\pi_0 B)(\fl_i,xt^a)=\delta_{i+a,0}(i^2-i) B(\partial,x),\\ &(\pi_0B)(xt^a,yt^b)=\delta_{a+b,0}\big(aB(x,y)+B(\partial,[x,y])\big),\forall i,j\in \bZ, xt^a,yt^b\in \fg;\end{aligned}\end{equation}

	\item[(3).] $\pi_{1}:(Z^2(\dg))^{d,\sigma} \rightarrow H_{(1)}$ defined by
\begin{equation}\label{Eq2.14}\begin{aligned}&(\pi_{1}\dot\alpha)(\fl_i,xt^j)=(\pi_1\dot\alpha)(\fl_i,\fl_j)=0,\\& (\pi_1\dot\alpha)(xt^a,yt^b)=\delta_{a+b,0}\dot\alpha(x,y),\forall i,j\in \bZ, xt^a,yt^b\in \fg.\end{aligned}\end{equation}\end{itemize}
\end{lemm}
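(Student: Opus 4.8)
The three maps $\pi_{-1},\pi_0,\pi_1$ have essentially the same structure, so the plan is to treat them in parallel and, for each $\pi_\bullet$, to establish five points: (i) the displayed formula defines a super-skew bilinear form of the correct parity; (ii) this form is a $2$-cocycle, i.e.\ lies in $Z^2(\fL)$; (iii) it satisfies the normalization $\alpha(\fl_1,\fL_{-1})=\alpha(\fl_0,\fL_a)=0$ defining $H$; (iv) it is an honest $d$-eigenvector with eigenvalue $-1,0,1$ respectively, so it lands in $H_{(-1)},H_{(0)},H_{(1)}$; and (v) the assignment is injective. Before anything else I would translate the data defining the three source spaces into working relations. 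For $\pi_{-1}$ the three generating subspaces in the source quotient become $f(d(x))=-f(x)$ (from $(d+1)\dg$), $f([d(x),y])=f([x,d(y)])=-\tfrac12 f([x,y])$ (from $[(d+\tfrac12)\dg,\dg]$ together with super-skewness of $f([\cdot,\cdot])$), and $f([x,[y,z]])=0$ (from $[\dg,[\dg,\dg]]$). For $\pi_0$, super-invariance and supersymmetry of $B$ on $\ddot{\fg}$ give $B([x,y],z)=B(x,[y,z])$, $B(\partial,[x,y])=B(d(x),y)$, $B(\partial,d(x))=0$, and $B(d(x),y)+B(x,d(y))=0$. For $\pi_1$, the source is characterized by $\dot\alpha$ being a cocycle on $\dg$ with $d\dot\alpha=\dot\alpha$ and $\sigma\dot\alpha=\dot\alpha$. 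Points (i) and (iii) are then immediate: in particular $\alpha(\fl_1,\cdot)$ and $\alpha(\fl_0,\cdot)$ vanish because each of $i^3-i$ and $i^2-i$ is zero at $i=1$ and at $i=0$.

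The heart of the argument is (ii), which I would organize according to how many Virasoro generators appear among the three arguments of the cocycle identity. For $\pi_1$ only the all-$\fg$ case and the $(\fl_i,x,y)$ case are nontrivial: the former is exactly the cocycle identity for $\dot\alpha$ on $\dg$, and the latter, after imposing $i+a+b=0$, collapses to $(d\dot\alpha)(x,y)=\dot\alpha(x,y)$, i.e.\ the eigenvalue-$1$ condition. For $\pi_0$ the all-$\fl$ case is the classical Virasoro computation for $\tfrac{i^3-i}{12}$; the all-$\fg$ case reduces, using $B([x,y],z)=B(x,[y,z])$ and supersymmetry, to $aB(x,[y,z])=(a+b)B([x,y],z)+(-1)^{|x||y|}bB(y,[x,z])$, which holds identically; and the mixed cases are dispatched by the recorded relations for $B(\partial,-)$ together with the (super-)Jacobi identity applied inside $B(\partial,-)$. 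For $\pi_{-1}$ the all-$\fg$ case is automatic since $f$ annihilates every double bracket, so both sides vanish, and the all-$\fl$ case is trivial since $\alpha(\fl_i,\fl_j)=0$; the two genuinely computational cases are $(\fl_i,\fl_j,x)$ and $(\fl_i,x,y)$. After substituting the relations $f(d(x))=-f(x)$ and $f([d(x),y])=-\tfrac12 f([x,y])$, each reduces to a scalar polynomial identity in $i,j,a$; for instance, with $b=-i-a$, the $(\fl_i,x,y)$ case becomes $(1-4(i+a)^2)(a-\tfrac i2)+(1-4a^2)(-\tfrac{3i}{2}-a)=2(i^3-i)$, in which the apparent $a$-dependence must cancel. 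Verifying these identities — which simultaneously pins down the exact coefficients $\tfrac{i^3-i}{6}$ and $\tfrac{1-4a^2}{12}$ — is where I expect the main effort and the main risk of a sign or coefficient slip.

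Step (iv) is then a one-line eigenvalue check reusing the same relations: for $\pi_{-1}$ one gets $(d\alpha)(\fl_i,xt^a)=\tfrac{i^3-i}{6}f(d(x))=-\alpha(\fl_i,xt^a)$ and $(d\alpha)(xt^a,yt^b)=\tfrac{1-4a^2}{12}\big(f([d(x),y])+f([x,d(y)])\big)=-\alpha(xt^a,yt^b)$; for $\pi_0$, $d\alpha=0$ follows from $B(\partial,d(-))=0$ and $B(d(x),y)+B(x,d(y))=0$; and for $\pi_1$, $d\alpha=\alpha$ follows from $d\dot\alpha=\dot\alpha$. This confirms membership in the correct eigenspace.

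For injectivity (v), I would reconstruct the source element from its image. For $\pi_1$, $\dot\alpha(x,y)=\alpha(xt^{a},yt^{-a})$ for an admissible $a$; for $\pi_{-1}$, taking $i=2$ gives $f(x)=\alpha(\fl_2,xt^{-2})$; for $\pi_0$, evaluating on $(\fl_i,\fl_{-i})$ recovers $B(\partial,\partial)$, on $(\fl_i,xt^{-i})$ recovers $B(\partial,x)$, and on $(xt^a,yt^{-a})$ with $a$ varying over an admissible coset separates the linear coefficient $B(x,y)$ from the constant $B(\partial,[x,y])$. In each case the $\sigma$-invariance of the source guarantees that the components not directly reached — those living on $\dg_{[i]}$ with $[i]\neq[0]$ — already vanish, so the reconstruction determines the source completely and each $\pi_\bullet$ is a monomorphism.
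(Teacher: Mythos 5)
Your proposal is correct and follows essentially the same route as the paper: a direct case-by-case verification of the $2$-cocycle identity (organized by the number of Virasoro generators among the three arguments) using exactly the translated relations $f(dx)=-f(x)$, $f([dx,y])=-\frac12 f([x,y])$, $f([\dg,[\dg,\dg]])=0$, resp.\ the superinvariance identities $B(dx,y)=B(\partial,[x,y])$, $B(\partial,dx)=0$, $B(dx,y)+B(x,dy)=0$, resp.\ $d\dot\alpha=\dot\alpha$, with membership in $H_{(\mu)}$ read off from the formulas; your sample scalar identity $(1-4(i+a)^2)(a-\frac i2)+(1-4a^2)(-\frac{3i}{2}-a)=2(i^3-i)$ is indeed correct. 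The only difference is that you spell out the injectivity-by-reconstruction step (e.g.\ $f(x)=\alpha(\fl_2,xt^{-2})$, with $\sigma$-invariance killing the unreached components), which the paper leaves implicit and which in any case reappears in its surjectivity lemmas.
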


\begin{proof} (1).
	Take any $f\in  \left (\Big (\dg\Big/\left ((d+1)\dg+[(d+\frac 12)\dg,\dg]+[\dg,[\dg,\dg]]\Big)\right)^{\sigma}\right)^*$. Note that
	$$f(dx)=-f(x), f([dx, y])=-\frac12  f([x, y]), f([[\dg, \dg],\dg])=0, \forall x,y\in\dg.
	$$

 We first verify that $\rho:=\pi_{-1}(f)\in Z^2(\fL)$.
For any $x,y,z\in\dg, a,b\in \frac 1n\bZ$, we compute
\[
\rho([xt^a,yt^{b}], zt_{-a-b})+\rho([yt^{b}, zt_{-a-b}], xt^a,)+\rho([ zt_{-a-b},xt^a],yt^{b})=0;
\]
\[\begin{aligned}
\rho\big(\fl_k,&[xt^a,yt^{-a-k})-\rho([\fl_k,xt^a],yt^{-a-k})-\rho(xt^a,[\fl_k,yt^{-a-k}\big)\\
=&\rho(\fl_{k},[xt^a, yt^{-a-k}])-\rho((a+k d) x t^{a+k}, y t^{-a-k})-\rho(x t^a,(-a-k+k d) y t^{-a}) \\
=&\frac{f([x, y])}{6}(k^{3}-k)+\frac{f([x, y])}{3} a((a+k)^{2}-\frac{1}{4})+\frac{f([d x, y])}{3} k((a+k)^{2}-\frac{1}{4}) \\
&-\frac{f([x, y])}{3}(a+k)(a^{2}-\frac{1}{4})+\frac{f([x, d y])}{3} k(a^{2}-\frac{1}{4}) \\
=&\frac{f([x, y])}{6}(k^{3}-k)+\frac{f([x, y])}{3}a((a+k)^{2}-\frac{1}{4})-\frac{f([x, y])}{6} k((a+k)^{2}-\frac{1}{4}) \\
&-\frac{f([x, y])}{3}(a+k)(a^{2}-\frac{1}{4})-\frac{f([x, y])}{6} k(a^{2}-\frac{1}{4}) \\
%=&\frac{f([x, y])}{3}(a k^{2}+a^{2} k+\frac{k}{4})-\frac{f([x, y])}{6}(2a %k^{2}+2a^{2} k+\frac{k}{2}) \\
=&0;
\end{aligned}\]

$\begin{aligned}\rho\big(\fl_{i}&,\left[\fl_{j}, x t^{-i-j}\right]\big)-\rho\big(\left[\fl_{i}, \fl_{j}\right], x t^{-i-j}\big)-\rho\big(\fl_{j},\left[\fl_{i}, x t^{-i-j}\right]\big) \\ =&\frac{f(x)}{6}(i^{3}-i)(-i-j)+\frac{f(d x)}{6}(i^{3}-i) j-\frac{f(x)}{6}((i+j)^{3}-(i+j))(j-i) \\ &+\frac{f(x)}{6}(j^{3}-j)(i+j)-\frac{f(d x)}{6}(j^{3}-j) i \\ =&\frac{f(x)}{6}(i^{3}-i)(-i-j)-\frac{f(x)}{6}(i^{3}-i) j-\frac{f(x)}{6}((i+j)^{3}-(i+j))(j-i) \\ &+\frac{f(x)}{6}(j^{3}-j)(i+j)+\frac{f(x)}{6}(j^{3}-j) i \\   =&0.\end{aligned}$

The fact $\rho\in H_{(1)}$ follows from the definition.

Similarly we can verify (2) and (3) (simpler than (1)).  
 \end{proof}

%We remark that  $\pi_{1}((B^2(\dg))^{d,\sigma})=0$ in Lemma \ref{lemm-8} (3).

%\begin{lemm}$\alpha_1\end{lemm}

 \begin{lemm}\label{lemm-9} The linear map $\pi_{-1}$ defined in (\ref{Eq2.12}) is surjective. \end{lemm}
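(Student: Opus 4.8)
The plan is to construct an explicit preimage and prove it works. Given $\alpha\in H_{(-1)}$, I would define $f\in(\dg^\sigma)^*$ by $f(x):=\alpha(\fl_2,xt^{-2})$; this pairing makes sense precisely when $x\in\dg_{[0]}=\dg^\sigma$, since $xt^{-2}\in\fg$ forces $x\in\dg^\sigma$. The normalization is consistent with \eqref{Eq2.12}, because $(\pi_{-1}f)(\fl_2,xt^{-2})=\frac{2^3-2}{6}f(x)=f(x)$. Since $\alpha\in H_{(-1)}$, the value $f(x)$ vanishes unless $x$ lies in the generalized $d$-eigenspace $\dg_{(-1)}$ (the slot $\fl_2$ has $d$-eigenvalue $0$), so $f$ is supported on $\dg_{(-1)}\cap\dg^\sigma$. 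The goal is then twofold: (i) show $f$ descends to an element of $\big((\dg/W)^\sigma\big)^*$, where $W=(d+1)\dg+[(d+\frac12)\dg,\dg]+[\dg,[\dg,\dg]]$, so that $\pi_{-1}(f)$ is actually defined; and (ii) show $\pi_{-1}(f)=\alpha$.

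For (i) I would verify that $f$ kills each of the three summands of $W$. The summand $(d+1)\dg$ is easy: Lemma \ref{lemm-6}(b) gives $\alpha(\fl_2,(d^2+d)(x)t^{-2})=0$, i.e. $f(d(d+1)x)=0$; since on $\dg_{(-1)}$ the operator $d$ is bijective and commutes with $d+1$, writing $d(d+1)x=(d+1)(dx)$ and letting $dx$ range over $\dg_{(-1)}\cap\dg^\sigma$ yields $f\big((d+1)w\big)=0$ for all $w$. The remaining two conditions, $f([(d+\frac12)\dg,\dg])=0$ and $f([\dg,[\dg,\dg]])=0$, are exactly the identities checked in the forward direction in Lemma \ref{lemm-8}(1); here I would read them backwards, extracting $f([(d+\frac12)x,y])=0$ from the coefficient of $k^3$ of \eqref{o-m} (displayed in \eqref{lhd}) together with Lemma \ref{lemm-6}(a), and $f([z,[x,y]])=0$ from the super-Jacobi identity for $\alpha$ applied to three elements of $\fg$.

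For (ii) I would put $\beta:=\alpha-\pi_{-1}(f)$, which lies in $H_{(-1)}$ by Lemma \ref{lemm-8}(1), and show $\beta=0$. By construction $\beta(\fl_2,xt^{-2})=0$, whence $\beta(\fl_i,xt^{-i})=0$ for all $i$ by Lemma \ref{lemm-6}(a); moreover $\beta(\fl_i,\fl_j)=0$ and $\beta(\fl_i,\fL_a)=0$ for $a\neq -i$ by the $d$-eigenvalue constraint and Lemma \ref{lemm-6}(d). Thus $\beta$ vanishes on every pair involving some $\fl_i$, and all the $\fl$-terms on the right of \eqref{o-m} vanish for $\beta$, so its $k^3$-coefficient \eqref{lhd} gives $\beta\big((d-1)xt^a,(d-\frac12)d\,yt^{-a}\big)+\beta\big((d-\frac12)(d-1)xt^a,d\,yt^{-a}\big)=0$. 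Applying Lemma \ref{zero} to $B(x,y)=\beta(xt^a,yt^{-a})$ on $\dg_{(\mu)}\times\dg_{(\nu)}$ with $\mu+\nu=-1$, the scalar $\sum_i f_i(\mu)g_i(\nu)=-2(\mu-1)\nu$ is nonzero whenever $\nu\neq 0$ (note $\mu\neq 1$ always, by the standing assumption that $1$ is not an eigenvalue of $d$), forcing $B=0$. The one remaining case $\nu=0$, i.e. $\mu=-1$, I would dispatch by running the same computation with the roles of the two slots exchanged so that the first slot has eigenvalue $0$ and the second eigenvalue $-1$, where $\sum_i f_i(0)g_i(-1)=-2\neq 0$, and then transferring back via the (super)antisymmetry of $\beta$. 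With Lemma \ref{lemm-6}(d) this gives $\beta(\fg,\fg)=0$, so $\beta=0$ and $\alpha=\pi_{-1}(f)$.

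The main obstacle is part (i), the descent of $f$ through $W$: the $(d+1)\dg$ condition is immediate, but $f([(d+\frac12)\dg,\dg])=0$ and $f([\dg,[\dg,\dg]])=0$ require re-running the cocycle and Jacobi computations of Lemma \ref{lemm-8} in reverse and matching polynomial coefficients in $k$. A secondary point demanding care is the eigenvalue bookkeeping in the Lemma \ref{zero} step, in particular isolating the degenerate case $\nu=0$ and handling it by antisymmetry rather than by a direct application.
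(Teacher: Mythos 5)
Your construction of $f$ by $f(x)=\alpha(\fl_2,xt^{-2})$ and the two-step plan (descent, then equality) coincide with the paper's proof, and your part (ii) is a sound repackaging of the paper's final step: where the paper derives the explicit formula $\alpha_{-1}(xt^a,yt^{-a})=-\frac 13(a^2-\frac 14)\alpha_{-1}(\fl_2,[x,y]t^{-2})$ directly and matches it against \eqref{Eq2.12}, you pass to the difference cocycle $\beta=\alpha-\pi_{-1}(f)$ and invoke Lemma \ref{zero}; these are equivalent, and your explicit treatment of the degenerate case $\nu=0$ by swapping the two slots and using antisymmetry is, if anything, more scrupulous than the paper, which silently substitutes $dy\mapsto y$ (an operation justified only on $\dg_{(\lambda)}$ with $\lambda\ne 0$).

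However, part (i) as written has two genuine gaps. First, the relation $f([(d+\frac 12)\dg,\dg])=0$ cannot be extracted from the coefficient of $k^3$ of \eqref{o-m}: that coefficient equates the expression \eqref{lhd}, which involves the still-unknown values $\alpha_{-1}(xt^a,yt^{-a})$ on $\fg\times\fg$, with $\fl_2$-terms, so it is contaminated by exactly the data you have not yet determined. (The $k^3$-coefficient yields a clean relation only in Lemma \ref{lemm-7}, where the $\fl$-terms vanish for $\mu\ne 0,\pm 1$; not here.) The paper instead takes the coefficient of $k^5$ of \eqref{o-m--1}: the left-hand side, being of degree $3$ in $k$, drops out, leaving the pure relation $\alpha_{-1}(\fl_2,[x,(d^2+\frac d2)y]t^{-2})=0$; and even then one only has $[x,d(d+\frac 12)y]$, so the extra factor $d$ must be cancelled --- possible only on $\dg_{(\lambda)}$ with $\lambda\ne 0$ --- with the remaining case $y\in\dg_{(0)}$ handled by exchanging $x$ and $y$, as in \eqref{2.16}. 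Second, the condition $f([\dg,[\dg,\dg]])=0$ does not follow from the cocycle identity alone: applied to $xt^a$, $yt^{b+k}$, $zt^{-a-b-k}$ it relates three unknown $\fg\times\fg$ values. The paper first establishes the explicit $\fg\times\fg$ formula quoted above, so that the cocycle identity becomes a polynomial identity in $k$ whose $k^2$-coefficient yields \eqref{2.18}. In your ordering that formula is produced only in part (ii), which presupposes part (i) (you need $f$ to descend before $\pi_{-1}(f)$, hence $\beta$, is even defined), so the plan is circular as structured. Both defects are repaired by following the paper's order inside part (i): the $(d+1)$-condition, then \eqref{2.16} from the $k^5$-coefficient, then the $\fg\times\fg$ formula from the $k^3$-coefficient together with Lemma \ref{zero}, then \eqref{2.18}, and only then define $f$ and conclude $\pi_{-1}(f)=\alpha_{-1}$.
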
 %For any $\alpha_{-1}\in H_{(-1)}$, we have $\alpha_{-1}(\fl_i,(d+1)xt^{-i})=0, \alpha_{-1}(\fl_i,[(d+\frac 12)x,y])=0,\alpha_{-1}(xt^a,yt^{-a})=-\frac 13(a^2-\frac 14)\alpha_{-1}(\fl_2,[x,y]t^{-2}), \alpha_{-1}(\fl_i,[xt^a,[yt^b,zt^{-i-a-b}]])=0$.\end{lemm}

 \begin{proof} Take $\alpha_{-1}\in H_{(-1)}$. For any $xt^a\in \dg_{(\mu)},yt^{b}\in \dg_{(\lambda)}
 	$ (the generalized eigenspaces are with respect to $d$ defined in  (\ref{d-sigma})) where $a, b\in\frac 1n\bZ, \lambda,\mu\in\bC$, we have
 	$$\alpha_{-1}(xt^a,yt^b)=0\text{ if }a+b\ne0\text{ or }\lambda+\mu\ne -1.$$
 	So we take  $xt^a\in \dg_{(\mu)},yt^{-a}\in \dg_{(-1-\mu)}$. Formulas we will get actually hold for any $x, y$ even they are not generalized eigenvectors with respect to $d$. Since
 	$d\dg_{(-1)}=\dg_{(-1)}$,
 	using Lemma \ref{lemm-6}(b) we know   that $\alpha_{-1}(\fl_2  ,(d+1)\dg_{(-1)})=0$. Furthermore
 	\begin{equation}\label{d+1}
 	\alpha_{-1}(\fl_2  ,(d+1)\dg)=0.\end{equation}
 	Applying this to  (\ref{o-m}), we have
 \begin{equation}\label{o-m--1}\begin{aligned}\alpha_{-1}\big((&a+2k(d-1))xt^a,(a-k-kd)(a-kd)yt^{-a}\big)\\ &-\alpha_{-1}\big((a+k(d-2))(a+k(d-1))xt^a,(a-2kd)yt^{-a}\big)\\ =&-\binom {k+1}3\alpha_{-1}\big(\fl_{2},[x,(a-2kd)(a-kd)y]t^{-2}\big)\\ & -\binom {k+1}3\alpha_{-1}\big(\fl_{2},[(a+k(d-2))x,(a-2kd)y]t^{-2}\big)\\
&+\binom {2k+1}3\alpha_{-1}(\fl_{2},[x,(a-k-kd)(a-kd)y]t^{-2}).\end{aligned}\end{equation}

The coefficients of $k^5$ give \begin{equation}\begin{aligned}&\alpha_{-1}\big(\fl_2, (-[(d-2)x,dy]+[x,d^2y]-4[x,d(d+1)y])t^{-2}\big)\\
&=\alpha_{-1}\big(\fl_2, ((-d-2)[x,dy]-2[x,d^2y])t^{-2}\big)\\ &=-2\alpha_{-1}\big(\fl_2,[x, (d^2+\frac d2)y]t^{-2}\big)=0.\end{aligned}\end{equation}

If $y\not\in \dg_{(0)}$, i.e., $\mu\ne -1$,   we have $\alpha_{-1}(\fl_2,[x, (d+\frac 12)y]t^{-2}])=0$, i.e., $$\alpha_{-1}(\fl_2,[x, dy]t^{-2}])=-\frac 12 \alpha_{-1}(\fl_2,[x, y]t^{-2}]).$$ Now from $$\alpha_{-1}(\fl_2,[dx,y]t^{-2})=\alpha_{-1}(\fl_2,d[x,y]t^{-2})-\alpha_{-1}(\fl_2,[x,dy]t^{-2})=-\frac 12\alpha_{-1}(\fl_2,[x, y]t^{-2}].$$ Exchanging $x$ and $y$ if $y\in \dg_{(0)}$, we get \begin{equation}\label{2.16}\alpha_{-1}(\fl_2,[x, dy]t^{-2}])=-\frac 12 \alpha_{-1}(\fl_2,[x, y]t^{-2}]),\forall xt^a,yt^{-a}\in \fg,\end{equation}
where we do not need $x, y$ to be generalized eigenvectors with respect to $d$.

Then (\ref{o-m--1}) becomes
$$\begin{aligned}\alpha_{-1}\big((&a+2k(d-1))xt^a,(a-k-kd)(a-kd)yt^{-a}\big)\\&-\alpha_{-1}\big((a+k(d-2))(a+k(d-1))xt^a,(a-2kd)yt^{-a}\big)\\=&-\binom {k+1}3\alpha_{-1}(\fl_{2},[x,(a+k)(a+\frac k2)y]t^{-2})\\& -\binom {k+1}3\alpha_{-1}(\fl_{2},[(a-\frac 52 k)x,(a+k)y]t^{-2})\\
	&
+\binom {2k+1}3\alpha_{-1}(\fl_{2},[x,(a-\frac k2)(a+\frac k2)y]t^{-2})\\
=&(a^2-\frac 14)k^3\alpha_{-1}(\fl_2,[x,y]t^{-2}).\end{aligned}$$

From (\ref{lhd}) the coefficient of $k^3$ gives $$\begin{aligned} 2\Big(\alpha_{-1}((d-1)&xt^a, (d-\frac 12)dyt^{-a})+\alpha_{-1}((d-\frac 12)(d-1)xt^a,dyt^a\Big)\\ =&(a^2-\frac 14)\alpha_{-1}(\fl_2,[x,y]t^{-2})=\frac 43 (a^2-\frac 14)\alpha_{-1}(\fl_2,[(d-1)x,dy]t^{-2}).\end{aligned}$$

Thus $\alpha_{-1}((d-\frac 12)xt^a,yt^{-a})+\alpha_{-1}(xt^a,(d-\frac 12)yt^{-a})=\frac 23(a^2-\frac 14)\alpha_{-1}(\fl_2,[x,y]t^{-2})$.

Let $B(x,y)=\alpha_{-1}(xt^a,yt^{-a})+\frac 13(a^2-\frac 14)\alpha_{-1}(\fl_2,[x,y]t^{-2})$. Then $$B((d-\frac 12)x,y)+B(x,(d-\frac 12)y)=0$$ and Lemma \ref{zero} imply $B=0$, i.e.,
\begin{equation}\alpha_{-1}(xt^a,yt^{-a})=-\frac 13(a^2-\frac 14)\alpha_{-1}(\fl_2,[x,y]t^{-2}).\end{equation}

Finally, from $$\begin{aligned}&\alpha_{-1}\big(xt^a,[yt^{b+k},zt^{-a-b-k}]\big)\\
	=&\alpha_{-1}\big([x,y]t^{a+b+k},zt^{-a-b-k}\big)+(-1)^{|x||y|}\alpha_{-1}\big(yt^{b+k},[x,z]t^{-b-k}\big),\end{aligned}$$ we have
$$\begin{aligned}-\frac 13(a^2-\frac 14)\alpha_{-1}(\fl_2,&[x,[y,z]]t^{-2})=-\frac 13((a+b+k)^2-\frac 14)\alpha_{-1}(\fl_2,[[x,y],z]]t^{-2})\\ &-(-1)^{|x||y|}\frac 13((b+k)^2-\frac 14)\alpha_{-1}(\fl_2,[x,[y,z]]t^{-2}),\forall k\in \bZ.\end{aligned}$$

The coefficient of $k^2$ gives $\alpha_{-1}(\fl_2,[[x,y],z]t^{-2})+(-1)^{|x||y|}\alpha_{-1}(\fl_2,[y,[x,z]]t^{-2})=0$. So \begin{equation}\label{2.18}\alpha_{-1}(\fl_2,[x,[y,z]]t^{-2})=0.\end{equation}

Let $f(x)=\left\{\begin{array}{ll}\alpha_{-1}(\fl_2, xt^{-2}),&\forall x\in \dg_{[0]}\\ 0, &\forall x\in \dg_{[m]}, n\nmid m\end{array}\right.$  From (\ref{d+1}), (\ref{2.16}), (\ref{2.18}) we know that $f$ satisfies the conditions in Lemma \ref{lemm-8} and $\pi_{-1}(f)=\alpha_{-1}$. So we have proved $\pi_{-1}$ is surjective.
\end{proof}

 \begin{lemm}\label{lemm-10}The linear map  $\pi_0$  defined in (\ref{Eq2.13}) is surjective.\end{lemm}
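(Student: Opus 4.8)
The plan is to mimic the proof of Lemma~\ref{lemm-9}: given $\alpha_0\in H_{(0)}$, I will reconstruct a supersymmetric invariant form $B$ on $\ddot{\fg}=\bC\partial\ltimes\dg$ with $\pi_0(B)=\alpha_0$, the $\sigma$-invariance of $B$ coming for free. First I normalize on $\fl_2$. Since $\fl_2\in\fL_{(0)}$ and $\alpha_0\in H_{(0)}$, the functional $z\mapsto\alpha_0(\fl_2,zt^{-2})$ is supported on the generalized $0$-eigenspace $\dg_{(0)}$ of $d$, where $d+1$ is invertible because $d$ is nilpotent; hence Lemma~\ref{lemm-6}(b) gives $\alpha_0(\fl_2,(d\dg)t^{-2})=0$. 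Writing $\phi(z):=\alpha_0(\fl_2,zt^{-2})$, we have $\phi(d\dg)=0$, and Lemma~\ref{lemm-6}(a) collapses to $\alpha_0(\fl_k,zt^{-k})=\binom k2\,\phi(z)$ for all $k$, exactly the role played by (\ref{d+1}) and (\ref{2.16}) in Lemma~\ref{lemm-9}.

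Next I write down the candidate $B$, inverting the formulas in (\ref{Eq2.13}): set $B(\partial,\partial):=2\alpha_0(\fl_2,\fl_{-2})$ and $B(\partial,x):=\frac12\phi(x)$, so that $\phi(d\dg)=0$ becomes $B(\partial,d\dg)=0$. For $x\in\dg_{[m]}$, $y\in\dg_{[m']}$ I put $B(x,y):=0$ when $m+m'\not\equiv0\pmod n$, and otherwise I choose $a=m/n$ and declare $B(x,y)$ to be the coefficient of $a$ in the function $a\mapsto\alpha_0(xt^a,yt^{-a})$. For this to make sense I must show that this function is affine in $a$ with constant term $\frac12\phi([x,y])=B(\partial,[x,y])$; this is the heart of the proof. (This contrasts with the quadratic dependence $-\frac13(a^2-\frac14)\phi([x,y])$ obtained for $H_{(-1)}$ in Lemma~\ref{lemm-9}.)

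To establish affineness I specialize (\ref{o-m}) to $\alpha_0$ and feed in $\alpha_0(\fl_k,zt^{-k})=\binom k2\phi(z)$. Reading off the coefficient of $k^3$ through (\ref{lhd}) produces a relation of the shape
\begin{equation*}
\alpha_0\big((d-1)xt^a,(d-\tfrac12)dyt^{-a}\big)+\alpha_0\big((d-\tfrac12)(d-1)xt^a,dyt^{-a}\big)=c(a)\,\phi([x,y]),
\end{equation*}
with $c(a)$ an explicit polynomial in $a$, after which Lemma~\ref{zero} (applied to $d-\frac12$, whose eigenvalues on $\dg_{(\mu)}\times\dg_{(-\mu)}$ sum to $-1\neq0$) forces $a\mapsto\alpha_0(xt^a,yt^{-a})$ into the asserted affine form and identifies its constant term with $\frac12\phi([x,y])$. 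Once this is in hand, $\pi_0(B)=\alpha_0$ on the pairs $(xt^a,yt^{-a})$ holds by construction, while on $(\fl_i,xt^{-i})$ and $(\fl_i,\fl_{-i})$ it follows from $\alpha_0(\fl_i,xt^{-i})=\binom i2\phi(x)=(i^2-i)B(\partial,x)$ together with the standard reduction of the Virasoro piece (normalized by $\alpha_0(\fl_1,\fL_{-1})=\alpha_0(\fl_0,\cdot)=0$) to $\frac{i^3-i}{12}B(\partial,\partial)$.

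It remains to check $B\in(\Inv(\ddot{\fg}))^\sigma$. Supersymmetry of $B$ is inherited from the antisymmetry of $\alpha_0$, and the invariance identity $B([u,v],w)=B(u,[v,w])$ is verified by splitting into cases according to how many of $u,v,w$ equal $\partial$: the all-$\dg$ case ($B([x,y],z)=B(x,[y,z])$) comes from the $2$-cocycle identity for $\alpha_0$ on a triple $(xt^a,yt^{b+k},zt^{-a-b-k})$ after extracting a coefficient in $k$, exactly as (\ref{2.18}) was obtained in Lemma~\ref{lemm-9}; the mixed (one-$\partial$) cases reduce to the affine formula combined with (\ref{g-g-1}); and the two-$\partial$ case is precisely $B(\partial,d\dg)=0$. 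Because every value of $\alpha_0$ used to build $B$ is taken on genuine elements of $\fL=\fg\rtimes W$—whose graded pieces already encode the $\sigma$-eigenspaces—$B$ vanishes off the $\sigma$-balanced components and so lies in $(\Inv(\ddot{\fg}))^\sigma$. The main obstacle is the third paragraph: rigorously extracting the affine dependence on $a$, along with its slope $B(x,y)$ and intercept $B(\partial,[x,y])$, from the coefficient relations of (\ref{o-m}); this is the computation-heavy core, just as it was in Lemma~\ref{lemm-9}.
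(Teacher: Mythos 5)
Your overall architecture is the paper's own: normalize $\phi(z):=\alpha_0(\fl_2,zt^{-2})$, prove $\phi(d\dg)=0$ so that Lemma \ref{lemm-6}(a) collapses to $\alpha_0(\fl_k,zt^{-k})=\binom k2\phi(z)$, pin down $\alpha_0(xt^a,yt^{-a})$ as affine in $a$, and read $B$ off from slope and intercept. But the central step of your third paragraph is misstated in a way that would break the argument. The $k^3$-coefficient of (\ref{o-m}), after the substitutions you describe, does \emph{not} produce a right-hand side of the form $c(a)\,\phi([x,y])$: as in (\ref{2.19}), it is $\frac 12\alpha_0(\fl_2,[(d+a)(d-1)x,y]t^{-2})$, which contains the terms $\phi([d^2x,y])$ and $\phi([dx,y])$, and these are not multiples of $\phi([x,y])$ --- all one knows is $\phi([dx,y])=-\phi([x,dy])$, coming from $\phi(d[x,y])=0$, which is exactly the adjoint identity in (\ref{d}) that your sketch never invokes. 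The paper handles this by absorbing the $d$-dependent bracket into the auxiliary form $B'(x,y)=\alpha_0(xt^a,dyt^{-a})+\frac 12\phi([(d+a)x,y])$, verifying via (\ref{d}) that $B'((d-\frac 12)x,y)+B'(x,(d-\frac 12)y)=0$, and only then applying Lemma \ref{zero} to conclude $B'=0$, i.e., the identity (\ref{02.23}): $\alpha_0(xt^a,dyt^{-a})=-\frac 12\phi([(d+a)x,y])$.

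More seriously, even the corrected output of Lemma \ref{zero} cannot by itself ``force $a\mapsto\alpha_0(xt^a,yt^{-a})$ into the asserted affine form,'' because (\ref{o-m}) and hence its $k^3$-coefficient relate values of $\alpha_0$ only at a single $a$, and (\ref{02.23}) controls the pairing only against $d\dg$ in one slot. To compare different values of $a$ one needs the separate relation (\ref{g-g-1}): the paper feeds (\ref{02.23}) and its $x\leftrightarrow y$ swap into (\ref{g-g-1}), the $d$-cross-terms cancel via $\phi([dx,y])+\phi([x,dy])=0$, and one lands on $(a-k)g(a)=a\,g(a-k)$ with $g(a)=\alpha_0(xt^a,yt^{-a})-\frac 12\phi([x,y])$, which is (\ref{02.24}) and yields linearity. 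This extra step is not cosmetic: when $d=0$ (the twisted Heisenberg--Virasoro and gap-$p$ cases), (\ref{02.23}) is vacuous, and the affine dependence on $a$ comes \emph{entirely} from (\ref{g-g-1}); your paragraph invokes (\ref{g-g-1}) only in the later invariance checks, so as written it provides no mechanism for affineness in this degenerate but important case. The remainder of your plan --- setting $B(\partial,\partial)=2\alpha_0(\fl_2,\fl_{-2})$, $B(\partial,x)=\frac 12\phi(x)$ extended by zero off $\dg_{[0]}$, reducing the $W$-part to the standard normalized Virasoro cocycle (a point the paper itself leaves implicit), and the case-by-case invariance check --- is sound and consistent with the paper.
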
% For any $\alpha_0\in H_{(0)}$, we have $\alpha_{0}(\fl_i,dxt^{-i})=0, \alpha_{0}(\fl_i,[x,dy])=0,\alpha_{0}(xt^a,yt^{-a})=-\frac a2\alpha_{0}(\fl_2,[x,y]t^{-2})...

 \begin{proof} Take $\alpha_{0}\in H_{(0)}$. For any $xt^a\in \dg_{(\mu)},yt^{b}\in \dg_{(\lambda)}
 	$  where $a, b\in\frac 1n\bZ, \lambda,\mu\in\bC$, we have
 	$$\alpha_{0}(xt^a,yt^b)=0\text{ if }a+b\ne0\text{ or }\lambda+\mu\ne 0.$$
 	So we take  $xt^a\in \dg_{(\mu)},yt^{-a}\in \dg_{(-\mu)}$. Formulas we will get   actually hold for any $x, y$ even they are not generalized eigenvectors with respect to $d$. Since
 	$(d+1)\dg_{(0)}=\dg_{(0)}$,
 	using Lemma \ref{lemm-6}(b) we know   that $\alpha_{0}(\fl_2  ,d\dg_{(0)}t^{-2})=0$. Furthermore
 	\begin{equation}\label{d}
 		\alpha_{0}(\fl_2  ,d\dg t^{-2})=0; \alpha_0(\fl_2, [x,f(d)y]t^{-2})=\alpha_0(\fl_2,[f(-d)x,y]t^{-2}),\forall f(t)\in \bC[t].\end{equation}
 	Applying this to  (\ref{o-m}), we have
 \begin{equation}\label{2.19}\begin{aligned}\alpha_{0}&\big((a+2k(d-1))xt^a,(a-k-kd)(a-kd)yt^{-a}\big)\\ &-\alpha_{0}\big((a+k(d-2))(a+k(d-1))xt^a,(a-2kd)yt^{-a}\big)\\=&-\binom k2\alpha_{0}\big(\fl_{2},[x,(a-2kd)(a-kd)y]t^{-2}\big)\\
 		&-\binom k2\alpha_{0}\big(\fl_{2},[(a+k(d-2))x,(a-2kd)y]t^{-2}\big)\\&
+\binom {2k}2\alpha_{0}(\fl_{2},[x,(a-k-kd)(a-kd)y]t^{-2})\\
=&-\binom k2\alpha_{0}(\fl_{2},[(a+2kd)(a+kd)x,y]t^{-2})\\
 		&-\binom k2\alpha_{0}(\fl_{2},[(a+2kd)(a+k(d-2))x,y]t^{-2})\\&
+\binom {2k}2\alpha_{0}(\fl_{2},[(a-k+kd)(a+kd)x,y]t^{-2})\\
=&k^2\alpha_{0}(\fl_{2},[a(d+a)x,y]t^{-2})+k^3\alpha_{0}(\fl_{2},[(d+a)(d-1)x,y]t^{-2}).\end{aligned}\end{equation}

%The coefficient of $k^4$ gives $\alpha_{0}(\fl_2, (-[(d-2)x,dy]+[x,d^2y]-2[x,d(d+1)y])t^{-2})=0$.

From (\ref{lhd}), the coefficients of $k^3$ in (\ref{2.19}) give
$$\alpha((d-1)xt^a, (d-\frac 12)dyt^{-a})+\alpha((d-\frac 12)(d-1)xt^a,dyt^{-a})\\
=\frac 12 \alpha_{0}(\fl_2,[(d+a)(d-1)x,y]t^{-2}).
$$

Therefore, $$\alpha(xt^a, (d-\frac 12)dyt^{-a})+\alpha((d-\frac 12)xt^a,dyt^{-a})\\
=\frac 12\alpha_{0}(\fl_2,[(d+a)x,y]t^{-2}).
$$

 Let $B(x,y)=\alpha_{0}(xt^a,dyt^{-a})+\frac 12\alpha_{0}(\fl_2,[(d+a)x,y]t^{-2})$. Then
 $$\begin{aligned}B((d-\frac 12)x,y)&+B(x,(d-\frac 12)y)=\alpha_{0}((d-\frac 12)xt^a,dyt^{-a})+\frac 12\alpha_{0}(\fl_2,[(d+a)(d-\frac 12)x,y]t^{-2})\\ &+\alpha_{0}(xt^a,(d-\frac 12)dyt^{-a})+\frac 12\alpha_{0}(\fl_2,[(d+a)x,(d-\frac 12)y]t^{-2})=0.\end{aligned}$$ Again we have $B(x,y)=0$, that is

 \begin{equation}\label{02.23}\alpha_{0}(xt^a,dyt^{-a})=-\frac 12\alpha_{0}(\fl_2,[(d+a)x,y]t^{-2}).\end{equation}

 Exchanging $x$ and $y$, we deduce that $$\alpha_{0}(dxt^a,yt^{-a})=-\frac 12\alpha_{0}(\fl_2,[x,(d-a)y]t^{-2}).$$

Combining with (\ref{g-g-1}), we deduce that $$\begin{aligned}(a-k)\alpha_0&(xt^a,yt^{-a})-a\alpha_0(xt^{a-k},yt^{k-a})\\
	=&-k\alpha_0(dxt^a,yt^{-a})-k\alpha_0(xt^{a-k},dyt^{k-a})+\alpha_0(\fl_{k},[x,y]t^{-k})\\ =&(-k\frac a2+k\frac{a-k}2+\binom k2)\alpha(\fl_2,[x,y]t^{-2})=-\frac k2\alpha(\fl_2,[x,y]t^{-2}), \forall k\in \bZ,\end{aligned}$$ i.e.,

\begin{equation}\label{02.24}\aligned (a-k)(\alpha_0&(xt^a,yt^{-a})-\frac 12\alpha(\fl_2,[x,y]t^{-2}))\\
 =&a\Big(\alpha_0(xt^{a-k},yt^{k-a})-\frac 12\alpha(\fl_2,[x,y]t^{-2})\Big),\forall xt^a,yt^{-a}\in \fg, k\in \bZ.\endaligned\end{equation}

Now we can define the bilinear form on $B\in (\Bil(\ddot{\fg}))^{\sigma}$ by $$\begin{aligned} B(x,y):=&\frac 1a\Big(\alpha_0(xt^a,yt^{-a})-\frac 12\alpha(\fl_2,[x,y]t^{-2})\Big),\forall xt^a,yt^{-a}\in \fg,a\ne 0; \\ B(\partial,x)=&B(x,\partial)=\frac 12\alpha_0(\fl_2, xt^{-2}),\forall xt^{-2}\in \fg.\end {aligned}$$ It is straightforward to check that $B\in (\Inv(\ddot\fg))^{\sigma}$ and $\alpha_0=\pi_{0}(B)$. Hence $\pi_0$ is surjective.

%$\alpha_{-1}(xt^a,d(d-1)y^{-a})=\alpha((d^2-d)xt^a,yt^a),\forall a\ne 0$.

\end{proof}

 \begin{lemm}\label{lemm-11} The linear map $\pi_1$  defined in (\ref{Eq2.14}) is surjective.\end{lemm}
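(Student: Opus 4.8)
The plan is to show that every $\alpha_1\in H_{(1)}$ is the image under $\pi_1$ of the form $\dot\alpha(x,y):=\alpha_1(xt^a,yt^{-a})$ on $\dg$, so the whole argument reduces to proving that this expression is independent of $a$ and defines a $d$- and $\sigma$-invariant $2$-cocycle on $\dg$. First I would observe that $\alpha_1$ is concentrated on $\fg\times\fg$. Since $\alpha_1\in\Bil(\fL)_{(1)}$ and $\fl_i\in\fL_{(0)}$, the pairing $\alpha_1(\fl_i,\fl_j)$ sits in the wrong generalized eigenvalue ($0+0\ne1$) and so vanishes, while $\alpha_1(\fl_i,xt^a)$ can be nonzero only if the $d$-generalized eigenvalue of $x$ equals $1$, which is impossible by Condition (1.1) (so $\dg_{(1)}=0$); by Lemma~\ref{lemm-6}(d), $\alpha_1(xt^a,yt^b)=0$ unless $a+b=0$. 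Hence $\alpha_1$ is determined by $\psi_a(x,y):=\alpha_1(xt^a,yt^{-a})$. The two identities I will use are the specialization of (\ref{g-g-1}), in which the term $\alpha_1(\fl_k,[x,y]t^{-k})$ now vanishes,
\[
\psi_a\big((a+k(d-1))x,\,y\big)=\psi_{a-k}\big(x,\,(a-kd)y\big),\qquad\forall k\in\bZ,\qquad(*)
\]
and the three-element cocycle identity applied to $(xt^a,yt^b,zt^{-a-b})$, which reads
\[
\psi_a(x,[y,z])=\psi_{a+b}([x,y],z)+(-1)^{|x||y|}\psi_b(y,[x,z]).\qquad(J)
\]

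The heart of the proof, and the step I expect to be the main obstacle, is $a$-independence of $\psi_a$. For genuine $d$-eigenvectors this is immediate: if $dx=\mu x$, $dy=\lambda y$ with $\mu+\lambda=1$ (the only case giving a nonzero value), then $\mu-1=-\lambda$ turns $(*)$ into $(a-k\lambda)\psi_a(x,y)=(a-k\lambda)\psi_{a-k}(x,y)$; as $\lambda\ne0$ (else $\mu=1$ and $x\in\dg_{(1)}=0$), the scalar $a-k\lambda$ is nonzero for all but one $k$, so $\psi_a(x,y)=\psi_{a-k}(x,y)$ and all values of $a$ in a coset are linked. For the general case I would filter each generalized eigenspace by the nilpotent operators $d-\mu$ and $d-\lambda$ and induct on the total filtration degree. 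Rewriting $(*)$ as $(a-k\lambda)\big(\psi_a(x,y)-\psi_{a-k}(x,y)\big)=-k\big(\psi_a((d-\mu)x,y)+\psi_a(x,(d-\lambda)y)\big)$ and invoking the inductive hypothesis that the two lower-degree terms on the right are $a$-independent constants with sum $c$, I obtain $\psi_a(x,y)-\psi_{a-k}(x,y)=-kc/(a-k\lambda)$ for all $k$. The delicate point is that this functional equation, compared for $k=1,2$ after clearing denominators, yields a polynomial identity in $a$ that can hold for all $a$ in the coset only when $c=0$; this simultaneously rules out a spurious $a$-dependence of the form $c/(a-k\lambda)$ and shows the nilpotent corrections cancel, closing the induction and giving $\psi_a=\psi_{a-k}$.

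Once $a$-independence is established I set $\dot\alpha:=\psi_a$ and finish quickly. Super-antisymmetry of $\dot\alpha$ follows from that of $\alpha_1$ together with $\psi_{-a}=\psi_a$; the $2$-cocycle condition for $\dot\alpha$ on $\dg$ is exactly $(J)$ with every $\psi$ replaced by $\dot\alpha$; feeding $a$-independence back into $(*)$ and reading off the coefficient of $k$ gives $\dot\alpha((d-1)x,y)=-\dot\alpha(x,dy)$, i.e. $\dot\alpha(dx,y)+\dot\alpha(x,dy)=\dot\alpha(x,y)$, so $\dot\alpha$ is the eigenvalue-$1$ element required by $(Z^2(\dg))^{d}$; and $\sigma$-invariance is automatic, since $\psi_a(x,y)$ is nonzero only when the $\sigma$-eigenvalues of $x$ and $y$ are mutually inverse. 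Comparing with (\ref{Eq2.14}) then shows $\pi_1(\dot\alpha)$ agrees with $\alpha_1$ on all pairs of basis vectors, whence $\alpha_1=\pi_1(\dot\alpha)$ and $\pi_1$ is surjective.
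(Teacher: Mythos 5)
Your proposal is correct, but it reaches the key identity (\ref{2.25}) by a genuinely different route from the paper's. The paper proves the two facts in the opposite order and with different tools: it first derives the $\big(d-\frac 12\big)$-symmetry (\ref{2.24}) by extracting the coefficient of $k^3$ (via (\ref{lhd})) from (\ref{2.23}), which is the specialization to $H_{(1)}$ of the heavy identity (\ref{o-m}), and only then obtains $a$-independence by combining (\ref{g-g-1}) with its $x\leftrightarrow y$ swap and (\ref{2.24}) to produce the single slick relation $(2a-k)\big(\alpha_1(xt^a,yt^{-a})-\alpha_1(xt^{a-k},yt^{k-a})\big)=0$. You instead work only from your identity $(*)$, the one-line specialization of (\ref{g-g-1}), prove $a$-independence directly by induction on the nilpotency filtration with respect to $d-\mu$ and $d-\lambda$, and get the $d$-invariance as a byproduct: the vanishing of your constant $c=\psi\big((d-\mu)x,y\big)+\psi\big(x,(d-\lambda)y\big)$ is, on generalized eigencomponents with $\mu+\lambda=1$, exactly the statement $\dot\alpha(dx,y)+\dot\alpha(x,dy)=\dot\alpha(x,y)$, so your final extraction of the coefficient of $k$ from $(*)$ is actually redundant. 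Your rational-function step checks out: if $c\neq0$ then $\frac{1}{a-\lambda}+\frac{1}{a-1-\lambda}=\frac{2}{a-2\lambda}$ holds for infinitely many $a$ in the coset, forcing $\lambda=\frac12$ from the linear coefficient and $\lambda=0$ from the constant term, a contradiction; and the needed nonvanishing $\mu,\lambda\neq0,1$ is guaranteed since $\dg_{(1)}=0$ by Condition (1.1). As to what each approach buys: yours is self-contained and elementary, bypassing the (\ref{o-m}) machinery entirely, at the cost of an induction and a denominator-clearing case analysis; the paper's reuses (\ref{o-m}), which is shared infrastructure with Lemmas \ref{lemm-9} and \ref{lemm-10}, so within the paper it comes essentially for free and dispatches the non-semisimple part of $d$ with no induction at all. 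The remaining steps (vanishing of $\alpha_1$ off $\fg\times\fg$ by eigenvalue bookkeeping, the cocycle identity for $\dot\alpha$ from the Jacobi-type identity $(J)$, $\sigma$-invariance from the $\bZ_n$-grading, and the verification $\pi_1(\dot\alpha)=\alpha_1$) coincide with the paper's.
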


 \begin{proof} Take $\alpha_{1}\in H_{(1)}$.  For any $xt^a\in \dg_{(\mu)},yt^{b}\in \dg_{(\lambda)}
 	$ where $a, b\in\frac 1n\bZ, \lambda,\mu\in\bC$, we have
 	$$\alpha_{1}(xt^a,yt^b)=0\text{ if }a+b\ne0\text{ or }\lambda+\mu\ne 1.$$
 	So we take  $xt^a\in \dg_{(\mu)},yt^{-a}\in \dg_{(1-\mu)}$. Formulas we will get actually hold for any $x, y$ even they are not generalized eigenvectors with respect to $d$.  From Lemma \ref{lemm-6}(b) we know   that 
 	$$\alpha_{1}(\fl_2  ,\dg_{(0)}t^{-2})=\alpha_{1}(\fl_i  ,\fl_j)=0, \forall i,j\in \bZ.$$
 	Using this to  (\ref{o-m}), we have
 \begin{equation}\label{2.23}\aligned \alpha_{1}\big((a+&2k(d-1))xt^a,(a-k-kd)(a-kd)yt^{-a}\big)\\ &=\alpha_{1}\big((a+k(d-2))(a+k(d-1))xt^a,(a-2kd)yt^{-a}\big).\endaligned\end{equation}

The coefficien of $k^3$ in (\ref{2.23}) gives $$\alpha_1((d-1)xt^a, (d-\frac 12)dyt^{-a})+\alpha_1((d-\frac 12)(d-1)xt^a,dyt^a))=0.$$ Recalling that $(d-1)\dg=\dg$, we may replace $(d-1)x$ with $x$ and replace $dy$ with $y$, to give
\begin{equation}\label{2.24}\alpha_{1}(xt^a,(d-\frac 12)yt^{-a})+\alpha_{1}((d-\frac 12)xt^a,yt^{-a})=0.\end{equation}

Switching  $x$ and $y$, and substituting $k$ with $-k$, $a$ with $-a$ in (\ref{g-g-1}),  we have $$\alpha_1(xt^a, (a+k(d-1))yt^{-a})-\alpha_1((a-kd)x t^{a-k},yt^{k-a})=0.$$ Combining with (\ref{g-g-1}) and (\ref{2.24}), we have
$$\begin{aligned}&0=\alpha_1((a+k(d-1))xt^a,yt^{-a})-\alpha_1(xt^{a-k},(a-kd)yt^{k-a})\\&+\alpha_1(xt^a, (a+k(d-1))yt^{-a})-\alpha_1((a-kd)x t^{a-k},yt^{k-a})\\ &=(2a-k)
\Big(\alpha_1(xt^a,yt^{-a})-\alpha_1(xt^{a-k},yt^{k-a})\Big),\end{aligned}$$
which implies \begin{equation}\label{2.25}\alpha_1(xt^a,yt^{-a})=\alpha_1(xt^{a-k},yt^{k-a}),\forall k\in \bZ.\end{equation}

Now from (\ref{2.25}) and (\ref{2.24}), we have $\dot{\alpha}\in (Z^2(\dg))^{d,\sigma}$ defined by $$\dot{\alpha}(x,y)=\alpha_1(xt^a,yt^{b}),\forall x\in \dg_{[na]},y\in \dg_{[nb]}.$$ And it is easy to see that $\alpha_1=\pi_1(\dot\alpha)$, hence $\pi_1$ is surjective.
 \end{proof}

{\it Proof of Theorem \ref{thm1}}. From Lemma \ref{lemm-5},\ref{lemm-7}-\ref{lemm-11}, we have $$H^2(\fL(\dg,d,\sigma),\bC^{1|1})_{\bar 0}\cong \big(Z^2(\dg)\big)^{d,\sigma}\oplus {(\Inv(\ddot{\fg}))}^\sigma\oplus \big(\dg/((d+1)\dg+[(d+\frac 12)\dg,\dg]+ [\dg,[\dg,\dg]])\big)^{\sigma}.$$ So we only need to show that $\big(Z^2(\dg)\big)^{d,\sigma}\cong H^2(\dg,\bC^{1|1})_{\bar 0}^{d,\sigma}$.
 In fact, for any $\alpha\in \big(B^2(\dg)\big)^{d,\sigma}$, there exists a linear map $f:L\rightarrow \bC$, such that $$\alpha(x,y)=f([x,y]), \text{ and } \alpha(x,y)=\alpha(dx,y)+\alpha(x,dy),\forall x,y\in \dg.$$ Thus $f([x,y])=f([dx,y])+f([x,dy])$,  i.e., $f((d-1)[x,y])=0$. Since $(d-1)[\dg,\dg]=[\dg,\dg]$, we have $f([\dg,\dg])=0$, i.e., $\alpha=0$. So $\big(B^2(\dg)\big)^{d,\sigma}=0$, which implies $\big(Z^2(\dg)\big)^{d,\sigma}\cong H^2(\dg,\bC^{1|1})_{\bar 0}^{d,\sigma}$ as desired.\qed

\

Let $\rho_{i,\bar{l}},i=1,\ldots,n_{-1,\bar{l}}$ be a basis of $\big((\dg/\big((d+1)\dg+[(d+\frac 12)\dg,\dg]+[\dg,[\dg,\dg]]\big))_{\bar l}^{\sigma}\big)^*$;
$B_{j,\bar{l}},j=1,\ldots,n_{0,\bar l}$ be a basis of $\{B\in {\Inv(\ddot{\fg})}_{\bar l}^\sigma|B(\partial,\partial)=0\}$;  $\dot{\alpha}_{k,\bar{l}},k=1,\ldots,n_{1,\bar{l}}$ be a basis $Z^2(\dg)_{\bar l}^{d,\sigma}$, $Z=Z_{\bar 0}\oplus Z_{\bar 1}$, and $Z$ has a basis $$\{z, z_{-1,i,\bar{l}},z_{0,j,\bar{l}},z_{1,k,\bar{l}}|i=1,2,\ldots, n_{-1,\bar{l}}; j=1,2,\ldots,n_{0,\bar{l}};k=1,2,\ldots, n_{1,\bar{l}},l=0,1\}.$$

Then we have the universal central extensions $\hL(\dg,d,\sigma)=\fL\oplus Z$ of $\fL$ with brackets:
 \begin{equation}\label{hL}\begin{aligned}{ } [\fl_k,\fl_j]=&(j-k)\fl_{k+j}+\delta_{k+j,0}\frac{k^3-k}{12}z,\\
 [\fl_k, xt^a]=& (a+kd)xt^{a+k}+\sum_{i,\bar{l}} \delta_{k+a,0}\frac{k^3-k}{12} \rho_{i,\bar{l}}(x)z_{-1,i,\bar{l}}\\
 &+\sum_{i,\bar{l}}\delta_{k+a,0}(k^2-k) B_{i,\bar{l}}(\partial,x)z_{0,i,\bar{l}},\\
  [xt^a,yt^b]=&[x,y]t^{a+b}+\sum_{i,\bar{l}}\delta_{a+b,0}\frac{1-4a^2}{24}\rho_{i,\bar{l}}([x,y] )z_{-1,i,\bar{l}}\\
  &+\sum_{i,\bar{l}}\delta_{a+b,0} \Big(a B_{i,\bar{l}}(x,y)+B_{i,\bar{l}}(\partial,[x,y])\Big)z_{0,i,\bar{l}}\\
  &+\sum_{i,\bar{l}}\delta_{a+b,0}\dot\alpha_{i,\bar{l}}(x,y)z_{1,i,\bar{l}},\\
  [\fL,Z]=&0.\end{aligned}\end{equation}

%Then from ?.? and ?.?, we have

%$0=\alpha_\mu(\fl_{k},d(d+1)[x,y]t^{-k})=\alpha_\mu((j-k+kd)d(d+1)(x)t^j,yt^{-j})+\alpha_\mu(d(d+1)(x)t^{j-k},(kd-j)(y)t^{k-j})+\alpha_\mu((j-k+kd)d(x)t^j,(d+1)yt^{-j})+\alpha_\mu(d(x)t^{j-k},(kd-j)(d+1)(y)t^{k-j})+\alpha_\mu((j-k+kd)(d+1)(x)t^j,dyt^{-j})+\alpha_\mu((d+1)(x)t^{j-k},(kd-j)d(y)t^{k-j})
%+\alpha_\mu((j-k+kd)(x)t^j,d(d+1)yt^{-j})+\alpha_\mu(xt^{j-k},(kd-j)d(d+1)(y)t^{k-j})$.

\begin{section}{$A\fL$-modules }\end{section}

We  need first to recall the   algebras: $A, W, \dg, \mathfrak{g}$ defined  in Section 1.
Now define  $\tL=\fL\ltimes A$ where $[A,A]=0, [\mathfrak{g}, A]=0, [\l_i, t^j]=jt_{i+j}$ for $i,j\in\bZ$. A $\tL$-module $P$ is called an $A\fL$-module  if $A$ acts associatively on $P$, i.e., $t^it^j v=t^{i+j}v,t^0 v=v$ for all $i,j\in \bZ, v\in P$.

In this section, we will determine all simple   cuspidal $A\fL$-modules which will be used to determine all simple quasi-finite modules over the Lie algebras $\hL$ in Section 4. We will first set up eight auxiliary results.

%\begin{lemm}\label{omega}
%Suppose $d\neq1$ for all $s\in S$.
%Let $M$ be a cuspidal $A\fL$-module. Then there exists $m\in\bN$ such that for all $j, p\in\bZ, x(p)\in \fg$, the operators
%$\sum\limits_{i=0}^m(-1)^i\binom{m}{i}t^{j-i}\cdot x(p+i),\sum\limits_{i=0}^m (-1)^i\binom{m}{i}t^{j-i}\cdot d(p+i)$ annihilate $M$.
%\end{lemm}

We will apply the weighting functor $\mathfrak{W}$ introduced in \cite{N2}. For any $A\fL$-module $P$ and  $\lambda\in\bC$, denote $$\mathfrak{W}^{(\lambda)}(P):=\bigoplus_{a\in\frac 1n \bZ}\Big(\big( P/ (\fl_0-\lambda-a)P\big)\otimes t^{a}\Big).$$
By Proposition 8 in \cite{N2}, we know that $\mathfrak{W}^{(\lambda)}(P)$ is an  $A\fL$-module with the actions

$$x\cdot\big((v+(\fl_0-\lambda-a)P)\otimes t^{a}\big):= \big(xv+(\fl_0-\lambda-a-r)P\big)\otimes t^{a+r},\forall x\in \tL_r,v\in P,a\in \frac 1n \bZ.$$

It is clear that $\mathfrak{W}^{(\lambda)}(P)$ is a weight   $A\fL$-module. If  $P$ is a weight module with $\Supp(P)\subseteq \lambda+\frac 1n \bZ$, then $\mathfrak{W}^{(\lambda)}(P)=P$. If  $P$ is a weight module with $\Supp(P)\cap (\lambda+\frac 1n \bZ)=\emptyset$, then $\mathfrak{W}^{(\lambda)}(P)=0$.
%Now we assume that $M$ is a weight module with $\supp(M)\subseteq \lambda+\frac 1n \bZ$ for some $\lambda\in \bC$. We have a natural vector spaces isomorphism  $ K\rightarrow M/(t-1)M\rightarrow \widetilde{M}/(\fl_0-\lambda-j)\widetilde{M}$, where $K=\oplus_{j=0}^{n-1}M_{a+\frac jn}$.

Now for any $\ca:=(t-1)W\ltimes \fg$ module $V$, we make it into an $\ca\ltimes A$ module by $t^i v=v,$ for all $i\in \bZ, v\in V$. Note that elements in $(t-1)W$ are linear combinations of  elements of the form $\fl_i-\fl_j$. Then we have the induced $A\fL$-module
$$\widetilde{V}:=\Ind_{(t-1)W\ltimes (\fg\oplus A)}^{\tL} V=\bC[\fl_0]\otimes V.$$

Note that $\tilde{V}$ is $\bC[\fl_0]$ free.  By identifying the vector space $V$ with  the vector spaces $\widetilde{V}/(\fl_0-\lambda-a)\widetilde{V}$ for all $a\in \frac 1n \bZ$, we have $$\mathfrak{W}^{(\lambda)}(\widetilde{V})=V\otimes \bC[t^{\frac 1n},t^{-\frac 1n}]$$ with the actions
\begin{eqnarray*}x (v\otimes t^{a}) &=& xv\otimes t^{a+b},\forall x\in \fg_{b}, \\
t^j (v\otimes t^{a}) &=& v\otimes t^{a+j},\\
\fl_j (v\otimes t^{a}) &=&(\lambda+a+j-\fl_0+\fl_j)v\otimes t^{a+j},\forall j\in \bZ,v\in V,a,b\in \frac 1n \bZ,
\end{eqnarray*}
where in the last equation, $v\in \widetilde{V}/(\fl_0-\lambda-a)\widetilde{V}$ on the left hand side, at the same time  $v\in \widetilde{V}/(\fl_0-\lambda-a-j)\widetilde{V}$ on the right hand side.

Let $f:L_1\rightarrow L_2$ be any homomorphism of Lie superalgebras and $V$ be a $L_2$ module, then we have the $L_1$ module $V^{f}=V$  with action $x\circ v=f(x) v,\forall x\in L_1,v\in V$. Let $\tau$ be the automorphism of $\tL$ with $\tau(\fl_j)=\fl_j-jt^j,\tau(x)=x,\forall x\in \fg\oplus A, j\in \bZ$. Then we have the $A\fL$-module

$$\Gamma(V, \lambda):=(\mathfrak{W}^{(\lambda)}(\widetilde{V}))^\tau=V\otimes \bC[t^{\frac 1n},t^{-\frac 1n}]$$ with the actions
\begin{eqnarray}\label{weighted-1}x (v\otimes t^{a}) &=& xv\otimes t^{a+b},\forall x\in \fg_b \\
\label{weighted-2}  t^j (v\otimes t^{a}) &=& v\otimes t^{a+j},\\
\label{weighted-3}  \fl_j (v\otimes t^{a}) &=&(\lambda+a-\fl_0+\fl_j)v\otimes t^{a+j},\forall j\in \bZ,v\in V,a,b\in \frac 1n \bZ.
\end{eqnarray}

Note that
 $\ca$ has a $\bZ_n$-gradation $\ca=\oplus_{i\in \bZ_n} \ca_{[i]}$ with $$\ca_{[i]}=\dg_{[i]}\otimes t^{\frac in}\bC[t,t^{-1}]\oplus \delta_{[i],[0]} (t-1)W,\forall i\in \bZ.$$ 
 Furthermore, suppose that $V$ is a $\bZ_n$-graded $\ca$-module, i.e., $V$ has a supersubspace decomposition $V=\oplus_{i=0}^{n-1} V_{[i]}$ with $\ca_{[i]}\cdot V_{[j]}\subseteq V_{[i+j]},\forall [i],[j]\in \bZ_n$. Then \begin{equation}\Gamma(V,\lambda)=\oplus_{i=0}^{n-1} M_{i}.\end{equation} 
 with $M_i=\oplus_{j\in \bZ} \big(V_{[j]}\otimes t^{\frac {j+i}n}\big)$ are $A\fL$-submodules of $\Gamma(V,\lambda)$. Denote \begin{equation}F(V,\lambda):=\oplus_{j\in \bZ} V_{[j]}\otimes t^{\frac {j}n}\subseteq \Gamma(V,\lambda).\end{equation}

 We call $\Gamma(V,\lambda)$ and $F(V,\lambda)$ as tensor modules or loop modules over $\fL$ (resp. over $\hat{\fL}$ with $Z$ acting as zero).

Let $M$ be a weight $A\fL$-module with  $\supp(M)\subseteq \lambda+\frac 1n \bZ$ for some $\lambda\in \bC$. Then $(t-1)M$ is an $\ca$-module, and $M/(t-1)M$ is a $\bZ_n$-graded $\ca$-module with $$(M/(t-1)M)_{[i]}=M_{\lambda+\frac in}+(t-1)M,\forall i\in \bZ.$$

\begin{prop}\label{prop-13}Let $M$ be a weight $A\fL$-module with  $\supp(M)\subseteq \lambda+\frac 1n \bZ$ for some $\lambda\in \bC$. Then $M\cong F((M/(t-1)M,\lambda)$. \end{prop}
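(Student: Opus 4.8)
The plan is to write down the obvious candidate map and then to verify separately that it is bijective and that it intertwines the action. Set $V = M/(t-1)M$, regarded as the $\bZ_n$-graded $\ca$-module with $V_{[i]} = M_{\lambda+i/n}+(t-1)M$ as in the paragraph preceding the statement, and define $\phi\colon M\to F(V,\lambda)$ on each weight space by $\phi(v)=\bar v\otimes t^{j/n}$ for $v\in M_{\lambda+j/n}$, where $\bar v$ is the image of $v$ in $V_{[j]}$. Since $M=\bigoplus_{j\in\bZ}M_{\lambda+j/n}$ and $F(V,\lambda)=\bigoplus_{j\in\bZ}V_{[j]}\otimes t^{j/n}$, this is a well-defined even linear map, and the whole statement reduces to two independent checks: that $\phi$ is a linear isomorphism, and that $\phi$ respects the actions of $A$, $\fg$ and $W$.

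First I would establish bijectivity. The key point is that $t$ acts invertibly on $M$ (as $t^{-1}\in A$ and $A$ acts associatively) and shifts $\fl_0$-weights by $1=n/n$, so each residue class modulo $n$ gives a $\bC[t,t^{-1}]$-stable tower $M^{(i)}=\bigoplus_{m\in\bZ}M_{\lambda+i/n+m}$ on which $t$ restricts to a bijection shifting the grading by $n$. Choosing $M_{\lambda+i/n}$ as fibre identifies $M^{(i)}\cong M_{\lambda+i/n}\otimes_\bC\bC[t,t^{-1}]$ as graded $\bC[t,t^{-1}]$-modules, whence $M^{(i)}/(t-1)M^{(i)}\cong M_{\lambda+i/n}$ with the quotient map restricting to an isomorphism on the fibre. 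Since $(t-1)$ preserves each tower, $M/(t-1)M=\bigoplus_i M^{(i)}/(t-1)M^{(i)}$ and $V_{[i]}=M^{(i)}/(t-1)M^{(i)}$, so the map $M_{\lambda+i/n}\to V_{[i]}$ used to define $\phi$ is a linear isomorphism for every $i$; hence $\phi$ is bijective.

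Then I would verify that $\phi$ is an $A\fL$-homomorphism by comparing against the explicit actions (\ref{weighted-1})--(\ref{weighted-3}) on $F(V,\lambda)$. The $A$-action is immediate because $t\equiv 1$ on $V$, so $\phi(t^k v)=\bar v\otimes t^{(j+kn)/n}=t^k\phi(v)$. For $\fg$ and for $(t-1)W$ one first checks that both subalgebras stabilize $(t-1)M$: the algebra $\fg$ commutes with $t$ by $[\fg,A]=0$, while for $D=\fl_{i+1}-\fl_i\in(t-1)W$ one computes $[D,t]=(t-1)t^{i+1}\in(t-1)A$, so $D(t-1)w=(t-1)Dw+[D,t]w\in(t-1)M$. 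Consequently these actions descend to $V$ and agree with the induced $\ca$-action there, giving $\phi(xv)=x\phi(v)$ for $x\in\fg$. For the Virasoro part, writing $\fl_k v=(\lambda+j/n)v+(\fl_k-\fl_0)v$ for $v\in M_{\lambda+j/n}$ and passing to $V$ gives $\overline{\fl_k v}=(\lambda+j/n)\bar v+(\fl_k-\fl_0)\bar v$, which is exactly the coefficient produced by (\ref{weighted-3}), since there $\fl_k-\fl_0\in(t-1)W$ acts on $V$; hence $\phi(\fl_k v)=\fl_k\phi(v)$.

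The main obstacle I anticipate is the bijectivity step, specifically showing $(t-1)M\cap M_{\lambda+i/n}=0$: this is the only genuinely structural (as opposed to computational) ingredient, and it is where the invertibility of $t$ and the resulting free $\bC[t,t^{-1}]$-module structure of each tower $M^{(i)}$ are essential. The intertwining checks are routine once one records that $\fg$ and $(t-1)W$ preserve $(t-1)M$, so the bracket identity $[D,t]\in(t-1)A$ for $D\in(t-1)W$ is the one small computation that makes the Virasoro action descend correctly.
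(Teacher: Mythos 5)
Your proposal is correct and takes essentially the same route as the paper's proof of Proposition \ref{prop-13}: you define the identical weight-space-wise map $v\mapsto \bar v\otimes t^{j/n}$ and check equivariance against the actions (\ref{weighted-1})--(\ref{weighted-3}) just as the paper does, using $\fl_0 v=(\lambda+j/n)v$ to match the Virasoro formula. The only difference is that the paper simply asserts bijectivity ("it is easy to see"), whereas you justify it correctly via the free $\bC[t,t^{-1}]$-module structure of each tower $M^{(i)}$, and you likewise make explicit the descent of the $\fg$- and $(t-1)W$-actions to $M/(t-1)M$ that the paper leaves implicit.
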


\begin{proof}It is easy to see that the following linear map  is bijective:
	$$\aligned \psi: &M\rightarrow F(M/(t-1)M,\lambda),\\
	&\psi(v_{\lambda+a})=\overline{v_{\lambda+a}}\otimes t^{a},\forall a\in \frac 1n\bZ,v_{\lambda+a}\in M_{\lambda+a},\endaligned$$ where $\bar{v}=v+(t-1)M$ for all $v\in M$. We are going to show that $\psi$ is an isomorphism of $A\fL$-modules. In fact, from (\ref{weighted-3}), we have
	$$\begin{aligned}\fl_j \psi(v_{\lambda+a})=&\fl_j (\overline{v_{\lambda+a}}\otimes t^a)\\
		=&((\lambda+a-\fl_0+\fl_j)\overline{v_{\lambda+a}})\otimes t^{a+j}\\
		=&\overline{(\lambda+a-\fl_0+\fl_j)v_{\lambda+a}}\otimes t^{a+j}\\
		=&\overline{\fl_jv_{\lambda+a}}\otimes t^{a+j}\\
		=&\psi(\fl_jv_{\lambda+a}).\end{aligned}$$
And $\psi(x v)=x\psi(v)$ for all $x\in \fg,v\in M$ follows directly from (\ref{weighted-1}) and (\ref{weighted-2}). \end{proof}

\begin{prop}\label{prop12} Suppose that $V$ is a finite dimensional $\bZ_n$-graded $\ca$-module.  

 (1) The loop module $F(V,\lambda)$ is a simple $A\fL$-module if and only if $V$ is $\bZ_n$-graded-simple, i.e., $V$ has no nontrivial $\bZ_n$-graded $\ca$-submodule.

 (2) The loop module  $\Gamma(V,\lambda)$ is completely reducible if $V$ is $\bZ_n$-graded-simple.\end{prop}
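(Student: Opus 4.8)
The two statements are about when the loop module $F(V,\lambda)$ (respectively $\Gamma(V,\lambda)$) is simple or completely reducible. The key structural fact to exploit is Proposition~\ref{prop-13}: every weight $A\fL$-module $M$ with support in $\lambda+\frac 1n\bZ$ is isomorphic to $F(M/(t-1)M,\lambda)$, where $M/(t-1)M$ is recovered as a $\bZ_n$-graded $\ca$-module. This identifies the category of such $A\fL$-modules with the category of $\bZ_n$-graded $\ca$-modules, and the proof should run entirely through this correspondence rather than through direct computation with the formulas \eqref{weighted-1}--\eqref{weighted-3}.

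For part~(1), I would argue as follows. First I would show that $\ca$-submodule structure on $V$ corresponds to $A\fL$-submodule structure on $F(V,\lambda)$. The forward direction is easy: any $\bZ_n$-graded $\ca$-submodule $U\subseteq V$ yields the $A\fL$-submodule $F(U,\lambda)=\oplus_{j} U_{[j]}\otimes t^{\frac jn}$, so if $V$ has a nontrivial such $U$ then $F(V,\lambda)$ is not simple. For the converse, suppose $N$ is a nonzero $A\fL$-submodule of $F(V,\lambda)$. The crucial point is that $N$ is automatically a weight submodule (it is $\fl_0$-stable and $\fl_0$ acts diagonalizably), so $N=\oplus_a N_{\lambda+a}$ with $N_{\lambda+a}\subseteq V_{[na]}\otimes t^a$. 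Setting $U_{[j]}=\{v\in V_{[j]}:v\otimes t^{\frac jn}\in N\}$, I must check that $U=\oplus_j U_{[j]}$ is a $\bZ_n$-graded $\ca$-submodule of $V$ and that $N=F(U,\lambda)$; this is where the actions come in. The $\fg$-action \eqref{weighted-1} shows $U$ is stable under $\dg_{[i]}\otimes t^k$, and the element $t^j$ acting by \eqref{weighted-2} shifts $a\mapsto a+j$ without changing the $\bZ_n$-class, which is exactly what forces $N$ to be of the form $F(U,\lambda)$ (all the $\bZ$-translates of a given weight appear together). The $(t-1)W$-action, i.e.\ differences $\fl_i-\fl_j$ acting via \eqref{weighted-3}, must be shown to stabilize $U$ as well; here one uses that the $\overline{\fl_0}$-term in \eqref{weighted-3} acts as a scalar on each weight space, so $(\fl_i-\fl_j)(v\otimes t^a)$ reduces to the $\ca$-action of $\fl_i-\fl_j$ on $v$. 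Together these give $N=F(U,\lambda)$ with $U$ a nonzero proper $\bZ_n$-graded submodule, proving simplicity of $F(V,\lambda)$ when $V$ is $\bZ_n$-graded-simple.

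For part~(2), I would use the decomposition $\Gamma(V,\lambda)=\oplus_{i=0}^{n-1} M_i$ already recorded before the proposition, where $M_i=\oplus_{j\in\bZ}(V_{[j]}\otimes t^{\frac{j+i}n})$. Each $M_i$ is an $A\fL$-submodule, and the natural map $V\mapsto$ (appropriate grading-shifted module) identifies $M_i$ with $F(V^{[i]},\lambda')$ for a suitably re-graded $\bZ_n$-module $V^{[i]}$ (the shift by $i$ permutes the $\bZ_n$-grading cyclically). Since $V$ is $\bZ_n$-graded-simple, each shifted module $V^{[i]}$ is again $\bZ_n$-graded-simple, so by part~(1) each $M_i$ is a simple $A\fL$-module. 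Hence $\Gamma(V,\lambda)=\oplus_{i=0}^{n-1}M_i$ is a direct sum of simple modules, i.e.\ completely reducible.

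\textbf{Main obstacle.} The delicate step is the converse in part~(1): verifying that an arbitrary $A\fL$-submodule $N$ must respect the loop/grading structure, i.e.\ that $N=F(U,\lambda)$ for a $\bZ_n$-graded $\ca$-submodule $U$. The subtlety is that the $\fl_j$-action \eqref{weighted-3} is \emph{not} the naive $\ca$-action — it carries the extra scalar term $(\lambda+a-\fl_0)$ and shifts the $t$-power by $j$. One must be careful to isolate the genuinely $\ca$-theoretic content (captured by the differences $\fl_i-\fl_j\in(t-1)W$, on which the $\lambda+a$ scalars cancel) from the mere weight-shifting done by $t^j$ and the single $\fl_j$'s. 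Once this separation is made cleanly, the identification of $N$ with $F(U,\lambda)$ and the stability of $U$ under all of $\ca_{[i]}$ follow without further computation.
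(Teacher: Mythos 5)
Your proposal is correct and takes essentially the same route as the paper: part (2) is the paper's decomposition $\Gamma(V,\lambda)=\oplus_{i=0}^{n-1}K_i$ with $K_i=F(V,\lambda+\frac in)$ (your cyclic regrading $V^{[i]}$ with $\lambda$ fixed describes literally the same submodules $M_i$), reduced to part (1). For part (1) the paper passes to the quotient $M/(t-1)M\cong V$ and the image of a submodule $M'$ there, while you reconstruct the graded $\ca$-submodule $U$ directly inside $F(V,\lambda)$ from weight spaces, $t$-shifts, and the differences $\fl_i-\fl_0\in(t-1)W$; this is the same loop/grading correspondence, and your direct verification in fact spells out the detail (every $A\fL$-submodule of $F(V,\lambda)$ has loop form, so the image of $M'$ in $M/(t-1)M$ is both nonzero and proper) that the paper's terse argument leaves implicit.
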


\begin{proof} (1). If $V$ is not $\bZ_n$-graded-simple, then it has a nontrivial $\bZ_n$-graded-simple submodule $V'$. By definition, $F(V',\lambda)$ is a nontrivial $A\fL$ submodule of $F(V,\lambda)$, hence $F(V,\lambda)$ is not simple.  
	
	Now suppose that $M=F(V,\lambda)$ is not simple, then it has a nontrivial $A\fL$ submodule $M'$.  So  $M/(t-1)M$ has a nontrivial $\bZ_n$-graded $\ca$ submodule $M'/(t-1)M'$.  And from (\ref{weighted-1}-\ref{weighted-3}), we have the nature $\bZ_n$-graded $\ca$ module isomorphism $$V\rightarrow M/(t-1)M, v_{[i]}\mapsto v\otimes t^{\frac in}+(t-1)M,\forall v_{[i]}\in V_{[i]}.$$ So $V$ is not $\bZ_n$-graded-simple, and we have (1).

(2). Note that $K_i:=\oplus_{j\in \bZ} V_{[j-i]}\otimes t^{\frac jn}$ for $i=0,1,\ldots,n-1$ are $A\fL$-submodule of $\Gamma(V,\lambda)$. We have $\Gamma(V,\lambda)=\oplus_{i=0}^{n-1} K_i$, and $K_i=F(V,\lambda+\frac in)$ are simple $A\fL$-modules if $V$ is $\bZ_n$-graded-simple. Statement (2) holds.
\end{proof}

Note that all finite dimensional simple $\bZ_n$-graded $\ca$-modules for Lie algebra $\ca$ were classified in \cite{MZ}.

We also need the following two lemmas, which is similar to Lemma 2.4 and 2.5 in \cite{CLW}.

\begin{lemm}\label{rel-subalg}
Let $k,l\in\bZ_+,i,j\in \bZ, xt^a\in \fg$. Then we have
\begin{enumerate}
\item $[(t-1)^k\fl_i,(t-1)^l \fl_j]=(l-k+j-i)(t-1)^{k+l}\fl_{i+j}+(l-k)(t-1)^{k+l-1}\fl_{i+j}$;
\item $[(t-1)^k\fl_i,x(t-1)^lt^a]=(a+id)x(t-1)^{k+l}t^{i+a}+(l+kd)x(t-1)^{k+l-1}t^{i+a+1}$.
\end{enumerate}
\end{lemm}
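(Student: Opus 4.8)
The plan is to represent every element through the realization $W=\Der(A)$, so that $\fl_i=t^{i+1}\frac{d}{dt}$ and, since $t^m\fl_i=\fl_{m+i}$, we have $(t-1)^k\fl_i=(t-1)^kt^{i+1}\frac{d}{dt}$, a genuine finite element of $W$. Both identities then reduce to manipulating differential operators and reorganizing the outcome into the basis $\{(t-1)^m\fl_p\}$. The single reindexing identity that drives both computations is $(t-1)^s\,t^{m+2}\frac{d}{dt}=(t-1)^s\,t\,\fl_m=(t-1)^{s+1}\fl_m+(t-1)^s\fl_m$, obtained from $t=(t-1)+1$; this is exactly what converts a surplus power of $t$ into the two consecutive powers of $(t-1)$ appearing on the right-hand sides.

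For part (1), I would apply the standard Witt bracket $[f\frac{d}{dt},g\frac{d}{dt}]=(fg'-f'g)\frac{d}{dt}$ with $f=(t-1)^kt^{i+1}$ and $g=(t-1)^lt^{j+1}$. Differentiating the two factors $(t-1)^{\bullet}$ and $t^{\bullet}$ separately produces, after cancellation, a $(t-1)^{k+l}t^{i+j+1}$ term with coefficient $j-i$ and a $(t-1)^{k+l-1}t^{i+j+2}$ term with coefficient $l-k$. Rewriting the latter via the reindexing identity splits it into $(l-k)(t-1)^{k+l}\fl_{i+j}+(l-k)(t-1)^{k+l-1}\fl_{i+j}$, and collecting gives precisely $(l-k+j-i)(t-1)^{k+l}\fl_{i+j}+(l-k)(t-1)^{k+l-1}\fl_{i+j}$.

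For part (2), I would first extend the given action $[\fl_i,x\otimes t^a]=(a+id)(x)\otimes t^{i+a}$ by linearity in the basis $\{\fl_m\}$ to a general formula $[g\frac{d}{dt},x\otimes h]=x\otimes gh'+d(x)\otimes(g'-t^{-1}g)h$ for $g,h\in A$, with $h$ allowed to involve $t^{1/n}$ since $\fg$ is graded by $\frac 1n\bZ$. Substituting $g=(t-1)^kt^{i+1}$ and $h=(t-1)^lt^a$ and again differentiating the $(t-1)$- and $t$-factors separately, the coefficient of $x$ assembles into $a(t-1)^{k+l}t^{i+a}+l(t-1)^{k+l-1}t^{i+a+1}$ and the coefficient of $d(x)$ into $i(t-1)^{k+l}t^{i+a}+k(t-1)^{k+l-1}t^{i+a+1}$, which recombine to $(a+id)x(t-1)^{k+l}t^{i+a}+(l+kd)x(t-1)^{k+l-1}t^{i+a+1}$, as claimed.

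The computations are otherwise routine, so the only real obstacle is the bookkeeping in the reorganization step: keeping the two derivative contributions (from the $(t-1)$-power versus the pure $t$-power) correctly separated, and then using $t=(t-1)+1$ to fold the surplus power of $t$ back into consecutive $(t-1)$-powers without sign or index slips. An alternative, purely algebraic route, namely expanding $(t-1)^k=\sum_p\binom{k}{p}(-1)^{k-p}t^p$, applying the defining brackets termwise, and resumming, would also work but is substantially messier; hence I would favor the differential-operator computation above.
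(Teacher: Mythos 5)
Your proposal is correct --- I verified both computations, including the extension formula $[g\frac{d}{dt},x\otimes h]=x\otimes gh'+d(x)\otimes(g'-t^{-1}g)h$ and the reindexing $(t-1)^st^{m+2}\frac{d}{dt}=(t-1)^{s+1}\fl_m+(t-1)^s\fl_m$ --- and it is essentially the same argument the paper relies on: the paper omits the proof, referring to the analogous Lemmas 2.4 and 2.5 of \cite{CLW}, which amount to exactly this kind of direct expansion in the realization $\fl_i=t^{i+1}\frac{d}{dt}$ together with the substitution $t=(t-1)+1$.
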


%From Lemma \ref{rel-subalg},  we get
\begin{lemm}\label{ideal}
For $k\in\bZ_+$, let $\fa_k=(t-1)^{k+1}W\ltimes((t-1)^k \fg)$. Then
\begin{enumerate}
%\item $\fa_0$ is a subalgebra of $\fL$;
\item $\fa_k$ is an ideal of $\fa_0=\fa$ and $\fa/\fa_1\cong\ddg$;
\item $[\fa_1,\fa_k]\subseteq\fa_{k+1}$;
\item the ideal of $\fa$ generated by $(t-1)^kW$ contains $\fa_{k}$;
\item $[\fa_{\bar 0},\fa_{\bar 0}]\supseteq\fa_{1,\bar{0}}$.
\end{enumerate}
\end{lemm}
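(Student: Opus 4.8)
The plan is to run everything through the two bracket formulas of Lemma \ref{rel-subalg} together with the $\fg$--$\fg$ relation $[x\otimes f,y\otimes g]=[x,y]\otimes fg$, reading off only the powers of $(t-1)$ that occur.

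For (1), to see that $\fa_k$ is an ideal I would check the three bracket types in $[\fa,\fa_k]$: the Witt--Witt bracket $[(t-1)\fl_i,(t-1)^{k+1}\fl_j]$ and the Witt--$\fg$ bracket $[(t-1)\fl_i,x(t-1)^kt^a]$ land in $(t-1)^{k+1}W\oplus(t-1)^k\fg$ by Lemma \ref{rel-subalg}, while $[xt^a,y(t-1)^kt^b]=[x,y](t-1)^kt^{a+b}\in(t-1)^k\fg$; every resulting power of $(t-1)$ is at least as large as the one defining $\fa_k$, so $[\fa,\fa_k]\subseteq\fa_k$. For $\fa/\fa_1\cong\ddg$ I would note that $t^{i+1}-t^{j+1}\in(t-1)\bC[t,t^{-1}]$ forces $(t-1)\fl_i\equiv(t-1)\fl_j\pmod{(t-1)^2W}$, so the Witt part collapses to a single line $\bar\partial$, and that $x\otimes t^{a}\equiv x\otimes t^{a'}\pmod{(t-1)\fg}$ within each graded piece identifies $\fg/(t-1)\fg$ with $\dg$; the computation $[(t-1)\fl_0,xt^a]\equiv d(x)t^{a+1}\pmod{\fa_1}$ then gives $[\bar\partial,x]=d(x)$ while $[xt^a,yt^b]\equiv[x,y]$, matching $\ddg=\bC\partial\ltimes\dg$ (a dimension count makes the map an isomorphism). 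Part (2) is the same bookkeeping with the powers raised by one: $[\fa_1,\fa_k]$ produces only powers $(t-1)^{\ge k+2}$ on the Witt side and $(t-1)^{\ge k+1}$ on the $\fg$ side, hence lies in $\fa_{k+1}$.

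Part (3) is the heart of the lemma. Writing $I$ for the ideal of $\fa$ generated by $(t-1)^kW$, I would first get the Witt part: by Lemma \ref{rel-subalg}(1), $[(t-1)\fl_i,(t-1)^k\fl_j]=(k-1+j-i)(t-1)^{k+1}\fl_{i+j}+(k-1)(t-1)^k\fl_{i+j}$, and since the last summand already lies in $I$ and the coefficient $k-1+j-i$ can be made nonzero for each fixed $i+j$, we obtain $(t-1)^{k+1}W\subseteq I$. To reach the $\fg$-part I would pair two brackets that share the same leading term: by Lemma \ref{rel-subalg}(2),
\[
[(t-1)^k\fl_i,x(t-1)t^a]=(ax+id(x))(t-1)^{k+1}t^{i+a}+(x+kd(x))(t-1)^kt^{i+a+1},
\]
\[
[(t-1)^{k+1}\fl_i,xt^a]=(ax+id(x))(t-1)^{k+1}t^{i+a}+(k+1)d(x)(t-1)^kt^{i+a+1},
\]
both of which lie in $I$. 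Subtracting, the $(t-1)^{k+1}$-terms cancel and leave $(1-d)(x)(t-1)^kt^{i+a+1}\in I$. This is the key step: the surviving operator is exactly $1-d$, which is invertible precisely because $1$ is not an eigenvalue of $d$, so $(1-d)\dg=\dg$ and hence $x(t-1)^kt^c\in I$ for every $x$ and every admissible $c$; thus $(t-1)^k\fg\subseteq I$ and $\fa_k=(t-1)^{k+1}W\oplus(t-1)^k\fg\subseteq I$. The main obstacle is engineering this cancellation: one must bracket $(t-1)^kW$ against $x(t-1)t^a$ but $(t-1)^{k+1}W$ against $xt^a$ so that the coefficients of the surviving term differ by the invertible operator $1-d$ rather than by $d$, which need not be surjective.

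Part (4) follows the same template inside the even subalgebra $\fa_{\bar 0}=(t-1)W\ltimes\fg_{\bar 0}$, with target $\fa_{1,\bar 0}=(t-1)^2W\ltimes(t-1)\fg_{\bar 0}$. Here $[(t-1)\fl_i,(t-1)\fl_j]=(j-i)(t-1)^2\fl_{i+j}$ gives $(t-1)^2W\subseteq[\fa_{\bar 0},\fa_{\bar 0}]$, and for even $x$ the difference $[(t-1)\fl_i,x(t-1)t^a]-[(t-1)^2\fl_i,xt^a]=(1-d)(x)(t-1)t^{i+a+1}$ is a bracket of even elements; invertibility of $1-d$ on $\dg_{\bar 0}$ (it preserves parity, being even) then yields $(t-1)\fg_{\bar 0}\subseteq[\fa_{\bar 0},\fa_{\bar 0}]$, so $\fa_{1,\bar 0}\subseteq[\fa_{\bar 0},\fa_{\bar 0}]$.
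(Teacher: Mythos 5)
Your proposal is correct and is essentially the argument the paper intends: the paper omits the proof (citing its similarity to Lemmas 2.4--2.5 of \cite{CLW}), and your verification via Lemma \ref{rel-subalg} plus the invertibility of $1-d$ from assumption (1.1) is exactly that computation --- in particular the paired-bracket cancellation yielding $(1-d)(x)(t-1)^kt^{i+a+1}\in I$ in part (3) and its parity-preserving analogue in part (4) check out. Two cosmetic points: in part (1) there is a fourth bracket type you should record, $[(t-1)^{k+1}\fl_j,xt^a]=(a+jd)x(t-1)^{k+1}t^{j+a}+(k+1)d(x)(t-1)^kt^{j+a+1}\in\fa_k$ (same formula, different exponents), and in part (3) the containment $(t-1)^{k+1}W\subseteq I$ is immediate from $(t-1)^{k+1}W\subseteq(t-1)^kW$, so your Witt--Witt bracket there is redundant.
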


\begin{lemm}[{\cite[Proposition 19.1]{H}}]\label{reductive}
\begin{enumerate}
\item Let $L$ be a finite dimensional reductive Lie algebra. Then $L=[L,L]\oplus Z(L)$ and $[L,L]$ is semisimple.
\item Let $L\subseteq\gl(V)$ ($\dim V<\infty$) be a Lie algebra acting irreducibly on $V$. Then $L$ is reductive and $\dim Z(L)\leq1$.
\end{enumerate}
\end{lemm}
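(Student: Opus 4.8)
The final statement to prove is Lemma~\ref{reductive}, which is attributed to \cite[Proposition 19.1]{H}. Since this is a citation of Humphreys' textbook on Lie algebras, the intended justification is a reference rather than an original argument; nevertheless, here is how I would reconstruct each part if a self-contained proof were required.

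\textbf{Plan for part (1).} Suppose $L$ is a finite-dimensional reductive Lie algebra, meaning by definition that the radical of $L$ equals its center $Z(L)$ (equivalently, the adjoint representation is completely reducible). The plan is to produce the decomposition $L=[L,L]\oplus Z(L)$ together with semisimplicity of $[L,L]$. First I would note that $L/Z(L)$ has trivial radical, hence is semisimple; write $\overline{L}=L/Z(L)$. Since a semisimple Lie algebra is perfect, $[\overline{L},\overline{L}]=\overline{L}$, which lifts to $[L,L]+Z(L)=L$. Next I would show the sum is direct by arguing $[L,L]\cap Z(L)=0$: because $\overline{L}$ is semisimple, the short exact sequence $0\to Z(L)\to L\to \overline{L}\to 0$ splits (here one invokes either Levi's theorem or Whitehead's lemma, since $Z(L)$ is an abelian, in fact central, ideal), giving a subalgebra $S\cong\overline{L}$ with $L=S\oplus Z(L)$ as Lie algebras. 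Then $S$ is semisimple hence perfect, so $[L,L]=[S,S]=S$, and $S\cap Z(L)=0$ forces the direct sum decomposition $L=[L,L]\oplus Z(L)$ with $[L,L]=S$ semisimple.

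\textbf{Plan for part (2).} Here $L\subseteq\gl(V)$ acts irreducibly on a finite-dimensional $V$, and I must show $L$ is reductive with $\dim Z(L)\le 1$. The key tool is Schur's lemma: since $V$ is an irreducible $L$-module over the algebraically closed field $\bC$, any element of $L$ acting as an $L$-module endomorphism must be a scalar. I would first show the radical $R=\operatorname{rad}(L)$ is central. For this I would use the fact that $R$ acts on $V$ by a weight (Lie's theorem gives a common eigenvector for the solvable ideal $R$, and an argument with the invariance of the weight space under $L$ shows $V$ is itself the weight space by irreducibility), so each $r\in R$ acts as a scalar on $V$; hence $R$ acts by scalars, so $[L,R]$ acts as zero on $V$, and faithfulness of the action on $V$ (as $L\subseteq\gl(V)$) gives $[L,R]=0$, i.e.\ $R\subseteq Z(L)$. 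Conversely $Z(L)\subseteq R$ always. Thus $R=Z(L)$, which is precisely reductivity. Finally, since every element of $Z(L)$ acts as a scalar on $V$ by Schur's lemma, and the action is faithful, $Z(L)$ embeds into the scalars $\bC\cdot\mathrm{id}_V$, whence $\dim Z(L)\le 1$.

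\textbf{Main obstacle.} The routine calculations are minimal; the substantive points are the two appeals to structure theory. In part (1) the genuinely nontrivial input is the splitting of the central extension, for which Whitehead's lemma (the vanishing of $H^1$ and $H^2$ of a semisimple Lie algebra with coefficients in a finite-dimensional module) or Levi's theorem is needed. In part (2) the delicate step is establishing $R\subseteq Z(L)$: one must carefully apply Lie's theorem to the solvable radical acting on $V$ and show the resulting weight is $L$-invariant so that irreducibility forces the radical to act by scalars. Both ingredients are standard and are exactly what \cite[Proposition 19.1]{H} records, so in the paper the cleanest course is simply to cite Humphreys, as the authors have done, rather than reproduce the argument.
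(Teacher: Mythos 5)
Your proposal is correct, and it matches the paper's approach: the paper gives no proof of its own, citing \cite[Proposition 19.1]{H} exactly as you anticipate, and your reconstruction (Levi/Whitehead splitting of the central extension in part (1); Lie's theorem plus the invariance-of-weights argument and Schur's lemma in part (2)) is precisely the standard argument recorded there. Nothing further is needed.
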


\begin{lemm}[{\cite[Theorem 2.1]{M}}, Engel's Theorem for Lie superalgebras]\label{Engel}
Let $V$ be a finite dimensional module for the Lie superalgebra $L=L_{\bar{0}}\oplus L_{\bar{1}}$ such that the elements of $L_{\bar{0}}$ and $L_{\bar{1}}$ respectively are nilpotent endomorphisms of $V$. Then there exists a nonzero element $v\in V$ such that $xv=0$ for all $x\in L$.
\end{lemm}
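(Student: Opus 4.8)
The final statement to prove is Engel's Theorem for Lie superalgebras, cited from \cite{M}: given a finite-dimensional module $V$ over $L=L_{\bar 0}\oplus L_{\bar 1}$ in which every homogeneous element of $L$ acts as a nilpotent endomorphism, there exists a nonzero $v\in V$ annihilated by all of $L$. Since this is quoted as a known result rather than proved in the excerpt, I will sketch the standard inductive proof.

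\medskip

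The plan is to proceed by induction on $\dim L$, adapting the classical Engel argument to the $\bZ_2$-graded setting. First I would observe that it suffices to produce a proper subalgebra $K\subsetneq L$ together with a homogeneous $x\in L\setminus K$ such that $K$ is an ideal of $K+\bC x$; then the induction will be driven by the adjoint action. The key preliminary step is the nilpotency transfer lemma: if $x\in L$ is a homogeneous element acting nilpotently on $V$, then $\mathrm{ad}\,x$ acts nilpotently on $L$ (and more generally on $\mathfrak{gl}(V)$), because $\mathrm{ad}\,x = L_x - (-1)^{|x|}R_x$ is a difference of commuting nilpotent operators (left and right super-multiplication by $x$), and in the superalgebra setting one must track signs carefully but the conclusion that $\mathrm{ad}\,x$ is nilpotent survives. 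I expect this sign bookkeeping to be the only genuinely super-specific subtlety.

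\medskip

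Next I would carry out the maximal-subalgebra step. Choose a maximal proper subalgebra $K\subsetneq L$; note $K$ is $\bZ_2$-graded since $L$ is. The space $L/K$ carries a representation of $K$ via $\mathrm{ad}$, and by the transfer lemma each homogeneous element of $K$ acts nilpotently on $L/K$. Since $\dim K<\dim L$, the inductive hypothesis (applied to the $K$-module $L/K$, whose homogeneous elements of $K$ act nilpotently) yields a nonzero homogeneous coset $\bar x\in L/K$ killed by all of $K$; equivalently there is a homogeneous $x\in L\setminus K$ with $[K,x]\subseteq K$. Hence $K+\bC x$ is a subalgebra strictly containing $K$, so by maximality $K+\bC x=L$ and $K$ is an ideal of $L$.

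\medskip

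Finally I would build the desired vector. Set $W=\{w\in V : Kw=0\}$. By induction applied to the $K$-module $V$ (all homogeneous elements of $K$ act nilpotently), $W\neq 0$. The crucial point is that $W$ is stable under the action of $x$: for $w\in W$ and homogeneous $y\in K$ one computes $y(xw)=(-1)^{|x||y|}x(yw)+[y,x]w=0$, using that $[y,x]\in K$ (the ideal property) and $yw=0$, so $xw\in W$. Thus $x$ acts on the nonzero finite-dimensional space $W$, and since $x$ is nilpotent on $V$ it has a nonzero kernel vector inside $W$; that vector $v$ satisfies $xv=0$ and $Kv=0$, hence $Lv=(K+\bC x)v=0$, completing the induction. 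The base case $\dim L\le 1$ is immediate from nilpotency of the single generator. As noted, the main obstacle is purely the consistent handling of the Koszul signs $(-1)^{|x||y|}$ in the transfer lemma and in the invariance computation for $W$; the overall architecture is identical to the ungraded Engel theorem.
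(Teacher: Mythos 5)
The paper does not prove this lemma at all: it is imported verbatim from Moons \cite[Theorem 2.1]{M}, so there is no internal proof to compare with and your sketch must be judged on its own. Its architecture is indeed the standard Engel induction, and your invariance computation $y(xw)=(-1)^{|x||y|}x(yw)+[y,x]w$ for $W=\{w\in V: Kw=0\}$ is exactly right. But the sketch has concrete gaps precisely at the super-specific points, i.e.\ where odd elements enter, and in each case the justification you give is false as stated even though the conclusion is salvageable.

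First, the transfer lemma: for odd $x$, $\mathrm{ad}\,x$ is \emph{not} a difference of commuting operators. The correct right-multiplication operator is $R_x(y)=(-1)^{|x||y|}yx$ (the sign depends on $|y|$, not on $|x|$ as you wrote), and when $|x|=\bar1$ one checks $L_xR_x=-R_xL_x$, so the classical ``commuting nilpotents'' argument breaks at exactly the step you flagged as mere bookkeeping. The standard repair is $(\mathrm{ad}\,x)^2=\tfrac12\,\mathrm{ad}([x,x])=\mathrm{ad}(x^2)$ with $x^2\in\gl(V)_{\bar0}$ nilpotent, reducing to the even case (alternatively: anticommuting nilpotents $A,B$ satisfy $(A-B)^2=A^2+B^2$ with $A^2,B^2$ commuting and nilpotent); one must also first replace $L$ by its image in $\gl(V)$, since nilpotency of $\rho(x)$ says nothing about $\mathrm{ad}_L x$ when $\rho$ is unfaithful. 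Second, a maximal proper subalgebra of a $\bZ_2$-graded Lie superalgebra need not be graded (for abelian $L$ every hyperplane is maximal), so ``$K$ is $\bZ_2$-graded since $L$ is'' is wrong; you must choose $K$ maximal among proper \emph{graded} subalgebras, which is also what makes the annihilator of $K$ in $L/K$ a graded subspace and lets you pick a homogeneous $x$. Third, for odd $x$ the space $K+\bC x$ need not be closed under the bracket, because of $[x,x]\in L_{\bar0}$; your ``hence $K+\bC x$ is a subalgebra'' is unjustified. Graded maximality rescues this: $[K,[x,x]]\subseteq[[K,x],x]+[x,[K,x]]\subseteq K$ and $[[x,x],[x,x]]=0$, so $K+\bC[x,x]$ is a graded subalgebra; were $[x,x]\notin K$, maximality would force $K+\bC[x,x]=L$, putting the odd element $x$ into $K_{\bar1}$, a contradiction --- hence $[x,x]\in K$ and $K+\bC x$ is a subalgebra after all (alternatively, work with $K+\bC x+\bC[x,x]$ and note $[x,x]v=2x^2v=0$ once $xv=0$). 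With these three repairs your induction closes and yields the cited statement.
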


\begin{lemm} \label{graded-simple} Let $G$ be an additive group, $L$ be a finite dimensional $G$-graded Lie superalgebra, $\frak{n}$ be a $G$-graded nilpotent ideal of $L$ with $\frak{n}_{\bar 0}\subseteq [L_{\bar 0},L_{\bar 0}]$. Then for any finite dimensional $G$-graded-simple $L$ module $V$, we have $\frak{n} V=0$. \end{lemm}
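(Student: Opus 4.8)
The plan is to prove the stronger statement that every homogeneous element of $\mathfrak{n}$ acts as a nilpotent operator on $V$, and then to extract a nonzero common kernel. Granting that the elements of $\mathfrak{n}_{\bar 0}$ and $\mathfrak{n}_{\bar 1}$ are nilpotent endomorphisms of $V$, Engel's theorem for Lie superalgebras (Lemma \ref{Engel}) produces a nonzero $v\in V$ with $\mathfrak{n}v=0$. I would then consider $\mathrm{Ann}_V(\mathfrak{n})=\{v\in V:\mathfrak{n}v=0\}$ and check that it is a nonzero $G$-graded $L$-submodule: it is $G$-graded because $\mathfrak{n}$ is homogeneous (so $v\in\mathrm{Ann}_V(\mathfrak{n})$ iff each graded component of $v$ lies in $\mathrm{Ann}_V(\mathfrak{n})$), and it is $L$-stable because for homogeneous $x\in L$, $y\in\mathfrak{n}$ and $v\in\mathrm{Ann}_V(\mathfrak{n})$ the super-Jacobi identity gives $y(xv)=(-1)^{|x||y|}x(yv)+[y,x]v=0$, using $yv=0$ and $[y,x]\in\mathfrak{n}$. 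Graded-simplicity then forces $\mathrm{Ann}_V(\mathfrak{n})=V$, i.e. $\mathfrak{n}V=0$.

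It remains to establish the nilpotency claim, which is the crux. First I would reduce to the even part: for odd $z\in\mathfrak{n}_{\bar 1}$ one has $z^2=\tfrac12[z,z]\in\mathfrak{n}_{\bar 0}$, so if every element of $\mathfrak{n}_{\bar 0}$ acts nilpotently, then $z^2$, and hence $z$, acts nilpotently. Thus the whole problem comes down to showing that the nilpotent ideal $\mathfrak{n}_{\bar 0}$ of the even Lie algebra $L_{\bar 0}$ acts nilpotently on the finite-dimensional $L_{\bar 0}$-module $V$. For this I would use the generalized weight-space decomposition $V=\bigoplus_{\lambda\in\mathfrak{n}_{\bar 0}^{*}}V^{\lambda}$ with respect to the nilpotent Lie algebra $\mathfrak{n}_{\bar 0}$, and invoke the classical invariance lemma: since $\mathfrak{n}_{\bar 0}$ is an ideal of $L_{\bar 0}$, each generalized weight space $V^{\lambda}$ is an $L_{\bar 0}$-submodule. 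Then for any $z\in\mathfrak{n}_{\bar 0}$ the operator $z$ acts on $V^{\lambda}$ as $\lambda(z)\,\mathrm{id}$ plus a nilpotent operator, so $\mathrm{tr}_{V^{\lambda}}(z)=(\dim V^{\lambda})\,\lambda(z)$.

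The hypothesis $\mathfrak{n}_{\bar 0}\subseteq[L_{\bar 0},L_{\bar 0}]$ now closes the argument: writing $z\in\mathfrak{n}_{\bar 0}$ as a sum of commutators $[x_i,y_i]$ with $x_i,y_i\in L_{\bar 0}$, and using that $V^{\lambda}$ is $L_{\bar 0}$-invariant so that each $[x_i,y_i]$ restricts to a genuine commutator of operators on $V^{\lambda}$, I get $\mathrm{tr}_{V^{\lambda}}(z)=0$. Comparing with the previous trace formula yields $\lambda(z)=0$ for every $z$, i.e. $\lambda=0$ whenever $V^{\lambda}\neq0$. Hence $V=V^{0}$ and $\mathfrak{n}_{\bar 0}$ acts nilpotently, completing the nilpotency claim. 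The main obstacle is precisely the invariance lemma — that the generalized weight spaces of the nilpotent ideal $\mathfrak{n}_{\bar 0}$ are $L_{\bar 0}$-stable — since this is the one genuinely non-formal input; it is a standard fact for finite-dimensional Lie algebras over $\bC$ (provable by the usual coadjoint/trace argument showing $\lambda([L_{\bar 0},\mathfrak{n}_{\bar 0}])=0$), and everything else is bookkeeping with the super-Jacobi identity and the grading.
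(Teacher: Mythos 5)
Your proof is correct, and its outer skeleton coincides with the paper's: reduce odd to even via $z^2=\tfrac12[z,z]\in\mathfrak{n}_{\bar 0}$, apply Engel's theorem for Lie superalgebras (Lemma \ref{Engel}) to get a nonzero common kernel, and conclude by checking that $\{v\in V\,:\,\mathfrak{n}v=0\}$ is a nonzero $G$-graded $L$-submodule, forcing it to be all of $V$. Where you genuinely diverge is at the crux, the nilpotency of the action of $\mathfrak{n}_{\bar 0}$. The paper takes a composition series of $V$ as an $L_{\bar 0}$-module and, on each simple factor $M$, invokes Lemma \ref{reductive}: the quotient $L_{\bar 0}/\ann_{L_{\bar 0}}(M)$ is reductive with semisimple derived algebra, and since $\mathfrak{n}_{\bar 0}\subseteq[L_{\bar 0},L_{\bar 0}]$, the image of $\mathfrak{n}_{\bar 0}$ is a nilpotent ideal of that semisimple algebra, hence zero; so $\mathfrak{n}_{\bar 0}$ annihilates every composition factor and acts nilpotently on $V$. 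You instead decompose $V$ into generalized weight spaces $V^{\lambda}$ under the nilpotent Lie algebra $\mathfrak{n}_{\bar 0}$, use the classical invariance lemma (valid because $\mathfrak{n}_{\bar 0}$ is an ideal of $L_{\bar 0}$, as $[L_{\bar 0},\mathfrak{n}_{\bar 0}]\subseteq\mathfrak{n}\cap L_{\bar 0}=\mathfrak{n}_{\bar 0}$) to make each $V^{\lambda}$ an $L_{\bar 0}$-module, and then kill every weight by the trace identity $\dim(V^{\lambda})\,\lambda(z)=\mathrm{tr}_{V^{\lambda}}(z)=0$ for $z\in\mathfrak{n}_{\bar 0}\subseteq[L_{\bar 0},L_{\bar 0}]$, so that $V=V^{0}$. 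Both arguments rest on exactly one standard non-formal input in characteristic zero — Lemma \ref{reductive} for the paper, the invariance lemma for generalized weight spaces for you, which you correctly identify and which is available by the usual argument showing $\lambda([L_{\bar 0},\mathfrak{n}_{\bar 0}])=0$. Your route is somewhat more self-contained and makes the role of the hypothesis $\mathfrak{n}_{\bar 0}\subseteq[L_{\bar 0},L_{\bar 0}]$ completely explicit as a vanishing-trace statement, avoiding both the structure theory of reductive linear Lie algebras and the passage through a composition series; the paper's version is shorter on the page because Lemma \ref{reductive} is already quoted and semisimplicity instantly absorbs the nilpotent ideal.
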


\begin{proof} Let $M$ be any finite-dimensional simple $L_{\bar 0}$ module. From Lemma \ref{reductive} we know that  $L_{\bar 0}/\ann_{L_{\bar 0}}(M)$ is reductive,  where $\ann_{L_{\bar 0}}(M)=\{x\in L_{\bar 0}|xM=0\}$. Moreover, $[L_{\bar 0}/\ann_{L_{\bar 0}}(M),L_{\bar 0}/\ann_{L_{\bar 0}}(M)]$ is a semisimple Lie algebra. And from $\frak{n}_{\bar 0}\subseteq [L_{\bar 0},L_{\bar 0}]$ we know that $(\frak{n}_{\bar 0}+\ann_{L_{\bar 0}}(M))/\ann_{L_{\bar 0}}(M)$ is a nilpotent ideal of the semisimple Lie algebra $[L_{\bar 0}/\ann_{L_{\bar 0}}(M),L_{\bar 0}/\ann_{L_{\bar 0}}(M)]$, which implies that $\frak{n}_{\bar 0}\subseteq \ann_{L_{\bar 0}} M$. 
	
	Applying the above established result to a composition series of $L_{\bar 0}$ submodules of the $L_{\bar 0}$ module $V$:
	$$ V\supset V_1\supset V_2\supset\cdots \supset V_r=\{0\},$$
	we see that any element in $\frak{n}_{\bar 0}$ acts nilpotently on $V$. And from $[x,x]\in \frak{n}_{\bar 0},\forall x\in \frak{n}_{\bar 1}$, we know that any element in $\frak{n}_{\bar 1}$ acts nilpotently on $V$. Let $V'=\{v\in V| \frak{n} v=0\}$. From Engel's Theorem for Lie superalgebras we know that $V'\ne 0$. It is easy to verify that $V'$ is a $G$-graded $L$ submodule of $V$. So $V'=V$, i.e., $\frak{n} V=0$.  \end{proof}

\begin{prop} \label{prop-17} For any finite dimensional simple (resp. $\bZ_n$-graded-simple) $(t-1)W\ltimes \fg$ module $V$, we have $\fa_1\cdot V=0$. Hence $V$ is a simple (resp. $\bZ_n$-graded-simple) module over  $\fa/\fa_1\cong\ddg$.\end{prop}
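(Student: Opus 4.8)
The plan is to pass to the finite-dimensional quotient $L:=\fa/\ann_\fa(V)\subseteq\gl(V)$ (where $\fa=(t-1)W\ltimes\fg$) and to apply Lemma~\ref{graded-simple}. Both statements are treated at once by letting the grading group $G$ of Lemma~\ref{graded-simple} be trivial in the simple case and $G=\bZ_n$ in the $\bZ_n$-graded-simple case; write $\bar x$ for the image of $x\in\fa$ in $L$. The whole proposition reduces to the single claim that the image $\overline{\fa_1}$ is \emph{nilpotent}. Indeed, $\fa_1$ is an ideal of $\fa$ (Lemma~\ref{ideal}(1)), so $\mathfrak n:=\overline{\fa_1}$ is a $G$-graded ideal of the finite-dimensional $L$, and by Lemma~\ref{ideal}(4) we have $\mathfrak n_{\bar0}=\overline{\fa_{1,\bar0}}\subseteq[L_{\bar0},L_{\bar0}]$. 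Hence, once $\mathfrak n$ is known to be nilpotent, Lemma~\ref{graded-simple} gives $\mathfrak nV=0$, i.e.\ $\fa_1V=0$. The module structure then factors through $\fa/\fa_1\cong\ddg$ (Lemma~\ref{ideal}(1)), and because $\fa_1$ acts trivially the $\fa$-submodules of $V$ are exactly the $\ddg$-submodules; thus $V$ is (graded-)simple over $\ddg$, as claimed.

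By Engel's theorem for Lie superalgebras (Lemma~\ref{Engel}) it suffices to prove that every element of $\fa_1$ acts as a nilpotent operator on $V$, for then $\overline{\fa_1}\subseteq\gl(V)$ is a Lie superalgebra of nilpotent operators and is therefore nilpotent. Recall $\fa_1=(t-1)^2W\ltimes(t-1)\fg$, so I treat the Witt directions and the current directions separately.

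For the Witt directions I use the element $h:=(t-1)\fl_0=\fl_1-\fl_0\in\fa$. Lemma~\ref{rel-subalg}(1) gives $[h,(t-1)^r\fl_i]\equiv(r-1)(t-1)^r\fl_i\pmod{\fa_r}$, so for $r\ge2$ the elements of $(t-1)^rW$ strictly raise the generalized $h$-eigenvalue and therefore act nilpotently on the finite-dimensional $V$. For the current directions the mechanism is that the Witt factor forces the representation to localize at $t=1$. The subspace $N:=\{x\in\fg:xV=0\}$ is stable under the adjoint action of $\fa$ (if $xV=0$ and $y\in\fa$ then $[y,x]v=\pm x(yv)=0$), hence invariant under the action of the vector fields $(t-1)g(t)\frac{d}{dt}\in(t-1)W$ on $\fg$. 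As $N$ has finite codimension and such a vector field preserves the primary component at a point $c\in\bC^*$ only when $(c-1)g(c)=0$ — which fails for some $g$ as soon as $c\ne1$ — the ``singular support'' of $N$ must be $\{1\}$; this forces the image of $(t-1)\fg$ into the nilradical of $\fg/N$, so that $(t-1)\fg$ also acts nilpotently on $V$.

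\textbf{The main obstacle} is to make the localization of the previous paragraph rigorous. Since $N$ need not be stable under multiplication by $A=\bC[t,t^{-1}]$, one cannot simply speak of its support as an $A$-module; instead one must argue solely from invariance under the derivations $(t-1)g(t)\frac{d}{dt}$, proving that a finite-codimensional subspace of $\fg$ invariant under all of them necessarily contains $(t-1)^s\fg$ for some $s$ and that the intermediate layers $(t-1)\fg,\dots,(t-1)^{s-1}\fg$ act by nilpotent operators. The terms proportional to $d$ appearing in the brackets (Lemma~\ref{rel-subalg}(2)) shift the $\dg$-factor and have to be carried through the estimate, but they do not change the order of vanishing at $t=1$ that drives the argument. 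Once every element of $\fa_1$ is shown to act nilpotently, the reduction of the first two paragraphs completes the proof.
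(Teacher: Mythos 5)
Your endgame coincides with the paper's (apply Lemma~\ref{graded-simple} to a nilpotent graded ideal covering $\fa_1$, using Lemma~\ref{ideal}(1),(4)), but the route you take to nilpotency has a genuine gap, in fact two. The paper's proof rests on one pivotal external input that your proposal never secures: since $V$ is a finite-dimensional $(t-1)W$-module, Lemma~2.6 of \cite{CLW} gives $(t-1)^kW\cdot V=0$ for some $k\in\bN$; then Lemma~\ref{ideal}(3) forces $\fa_k\cdot V=0$ (the annihilator is an ideal containing $(t-1)^kW$), so $V$ is a graded-simple module over the finite-dimensional quotient $L=\fa/\fa_k$, in which $\mathfrak{n}=\fa_1/\fa_k$ is an honest nilpotent ideal by Lemma~\ref{ideal}(2), and Lemma~\ref{graded-simple} finishes. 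You instead try to prove directly that every element of $\fa_1$ acts nilpotently, and both halves of that program fail as written. For the Witt directions, $(t-1)^r\fl_i$ is \emph{not} a generalized $\mathrm{ad}\,h$-eigenvector for $h=(t-1)\fl_0$: by Lemma~\ref{rel-subalg}(1), $\big(\mathrm{ad}\,h-(r-1)\big)(t-1)^r\fl_i=(r-1+i)(t-1)^{r+1}\fl_i$, and a further application of $\mathrm{ad}\,h-(r-1)$ produces $(t-1)^{r+1}\fl_i$ with coefficient $1$ plus deeper terms, since the layer $(t-1)^sW$ carries approximate eigenvalue $s-1$ and these are pairwise distinct. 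The iteration therefore never terminates in $\fa$, so the claim that such elements ``strictly raise the generalized $h$-eigenvalue'' on $V$ is unjustified unless you already know that deep filtration layers annihilate $V$ --- which is precisely the content of the cited Lemma~2.6 of \cite{CLW}. One could attempt to patch this by a stabilization argument on the decreasing chain of images of $(t-1)^sW$ in $\gl(V)$, but that amounts to reproving the cited lemma, and your sketch does not carry it out.

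For the current directions $(t-1)\fg$ you offer only the localization-at-$t=1$ heuristic and yourself flag it as the ``main obstacle''; no rigorous argument is given that a finite-codimensional subspace of $\fg$ invariant under all $(t-1)g(t)\frac{d}{dt}$ must contain $(t-1)^s\fg$, nor that the intermediate layers act nilpotently. Note also that the logical order in the paper is the reverse of yours: elementwise nilpotent action of $\fa_1$ on $V$ is not an input but a \emph{by-product} of the proof of Lemma~\ref{graded-simple} (via reductivity of $L_{\bar 0}/\ann_{L_{\bar 0}}$ and Engel), applied only after $\fa_kV=0$ is in hand. So the single missing ingredient in your proposal is exactly the annihilation of some deep layer $\fa_k$, i.e.\ the combination of \cite{CLW}, Lemma~2.6 with Lemma~\ref{ideal}(3); with that added, your reduction to Lemma~\ref{graded-simple} (trivial grading group in the ungraded case, $G=\bZ_n$ in the graded case) matches the paper and the Engel-based detour becomes unnecessary, since $\fa_1/\fa_k$ is nilpotent for free by Lemma~\ref{ideal}(2).
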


\begin{proof}Note that $V$ is a finite dimensional $(t-1)W$ module. From Lemma 2.6 in \cite{CLW}, we have $(t-1)^{k} W\cdot V=0$ for some $k\in \bN$.% And from $[(t-1)^{k}\fl_{i}, xt^{a-i}]=[\sum_{j=0}^{k}}\binom{k+1}{j}(-1)^{k-j}\fl_{i+j},xt^{a-i}]=\sum_{j=0}^{k}}\binom{k}{j}(-1)^{k-j}(a-i+(i+j)d)xt^{a+j}=(a+i(d-1))x(t-1)^{k}t^a+kd(x) (t-1)^{k-1}t^{a+1},\forall i\in \bZ$,

 From Lemma \ref{ideal} (3), we know that  $\fa_{k}\cdot V=0$.  Hence $V$ is a simple(resp. $\bZ_n$-graded-simple) module over $\fa/\fa_k$. From Lemma \ref{ideal}, we may apply Lemma \ref{graded-simple} for $L=\fa/\fa_k$ and $\frak{n}=\fa_1/\fa_k$ to obtain $\fa_1 V=0$ as expected.  \end{proof}

Now for any  $\ddg$ module $V$, using Proposition \ref{prop-17} we can naturally regard it into  a $(t-1)W\ltimes (\fg\oplus A)$ module by $ \fa_{1}\cdot V=0$ and $t^i(v)=v,$ for all $v\in V$. Then we have the tensor modules $\Gamma(V,\lambda)$ and $F(V,\lambda)$. More precisely,
for any  $\ddg$-module $V$ and $\lambda\in \bC$, we have $A\fL$ weight module $\Gamma(V,\lambda):=V\otimes \bC[t^{\frac 1n},t^{-\frac 1n}]$ with actions
\begin{align}
&t^i\cdot(v\otimes t^b)=v\otimes t^{b+i},\\
&\fl_i\cdot(v\otimes t^b)=(\lambda+b+i\partial )v\otimes t^{b+i},\\
&xt^a \cdot(v\otimes t^b)=xv\otimes t^{a+b}, \forall xt^a\in \fg, i\in \bZ, b\in \frac 1n \bZ, v\in V.
\end{align}

And for any $\bZ_n$-graded $\ddg$-module $V$, i.e., $V=\oplus_{i=0}^{n-1} V_{[i]}$ with $\ddg_{[i]}\cdot V_{[j]}\subseteq V_{[i+j]}$ for all $[i],[j]\in \bZ_n$, where $\ddg_{[i]}=\dg_{[i]}\oplus \delta_{[i],[0]} \bC \partial,\forall i\in \bZ$. We have the $A\fL$-module
\begin{equation}F(V,\lambda):=\oplus_{j\in \bZ} V_{[j]}\otimes t^{\frac jn}\subseteq \Gamma(V,\lambda).\end{equation}

%If it is true, then we can obtain all the simple cuspidal $A\fL$ from a finite-dimensional $(t-1)W\ltimes \hg$ modules.
Now we are ready to give the classification of simple cuspidal $A\fL$-modules.

\begin{theo}\label{AL}Let $M$ be any simple cuspidal $A\fL$-module. Then

(1) $M\cong F(V,\lambda)$ for some $\lambda\in \bC$ and some finite dimensional $\bZ_n$-graded-simple $\ddg$ module $V$;

(2) $M$ is isomorphic to a simple $\tilde{\fL}$ sub-quotient of a loop module $\Gamma(V',\lambda)$ for some $\lambda\in \bC$ and some finite dimensional simple $\ddg$ module $V'$.
\end{theo}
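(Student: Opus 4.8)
The plan is to reduce a simple cuspidal $A\fL$-module $M$ to data on the finite-dimensional piece $\ddg$, using the structural results established above. The key observation driving everything is that a cuspidal module has uniformly bounded weight multiplicities, so the action of $A=\bC[t,t^{-1}]$ together with the finite-dimensionality of fibers lets us extract a finite-dimensional module over a finite-dimensional quotient algebra.

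First I would fix $\lambda\in\bC$ so that $\Supp(M)\subseteq\lambda+\frac1n\bZ$ (possible since $M$ is simple), and then invoke Proposition \ref{prop-13} to write $M\cong F(M/(t-1)M,\lambda)$. This identifies $M$ with a loop module built from the $\bZ_n$-graded $\ca$-module $\ol{V}:=M/(t-1)M$, where $\ca=(t-1)W\ltimes\fg$. The next step is to show $\ol{V}$ is finite-dimensional: since $M$ is cuspidal, each weight space $M_{\lambda+a}$ has dimension at most some fixed $N$, and each graded piece $\ol{V}_{[i]}$ is a quotient of a single such weight space, so $\dim\ol{V}\le nN<\infty$. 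By Proposition \ref{prop12}(1), simplicity of $M=F(\ol V,\lambda)$ forces $\ol V$ to be $\bZ_n$-graded-simple. Now Proposition \ref{prop-17} applies: $\fa_1\cdot\ol V=0$, so $\ol V$ factors through $\ca/\fa_1\cong\ddg$ and is a finite-dimensional $\bZ_n$-graded-simple $\ddg$-module. Setting $V:=\ol V$ gives part (1).

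For part (2) I would pass from the $\bZ_n$-graded-simple $\ddg$-module $V$ to an ordinary simple $\ddg$-module. The idea is that a $\bZ_n$-graded-simple module need not be simple as an ungraded module, but it embeds into (or appears as a subquotient of) an induced/coinduced object built from a genuine simple $\ddg$-module $V'$. Concretely, one takes a simple $\ddg$-submodule $V'$ of $V$ (ignoring the grading), and relates $\Gamma(V,\lambda)$ to $\Gamma(V',\lambda)$ via the grading decompositions $\Gamma(V,\lambda)=\oplus_{i=0}^{n-1}K_i$ from Proposition \ref{prop12}(2); since $M\cong F(V,\lambda)$ is a summand $K_i$ of $\Gamma(V,\lambda)$, and $\Gamma(V',\lambda)$ surjects onto or contains the relevant pieces, $M$ realizes as a simple $\tL$-subquotient of $\Gamma(V',\lambda)$. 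The functorial compatibility of $\Gamma(-,\lambda)$ with $\ddg$-module maps (visible from the explicit actions in the displayed formulas for $\Gamma(V,\lambda)$) is what makes this transfer work.

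The main obstacle I anticipate is part (2): carefully controlling the relationship between $\bZ_n$-graded-simplicity and ordinary simplicity over $\ddg$, and ensuring that the subquotient of $\Gamma(V',\lambda)$ one lands in is genuinely simple and isomorphic to $M$ rather than merely a subquotient of a larger loop module. One must track how the $\bZ_n$-grading on $V$ interacts with restriction to an ungraded simple submodule $V'\subseteq V$, and verify that the loop construction does not collapse or enlarge the module unexpectedly; the finite-dimensionality of $V$ and the explicit shift structure in \eqref{weighted-3} should keep this manageable, but the bookkeeping between graded and ungraded simplicity is the delicate point.
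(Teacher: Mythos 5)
Your part (1) is correct and is essentially the paper's own argument: Proposition \ref{prop-13} gives $M\cong F(M/(t-1)M,\lambda)$, cuspidality bounds $\dim M/(t-1)M$ (each graded piece $(M/(t-1)M)_{[i]}$ is the image of the single weight space $M_{\lambda+\frac{i}{n}}$, since $t^{-j}m\equiv m \bmod (t-1)M$), Proposition \ref{prop12}(1) converts simplicity of $M$ into $\bZ_n$-graded-simplicity of $M/(t-1)M$, and Proposition \ref{prop-17} kills $\fa_1$, making $V:=M/(t-1)M$ a finite dimensional $\bZ_n$-graded-simple $\ddg$-module.

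Part (2), however, has a genuine gap at exactly the point you flag as delicate. You pick an ungraded simple $\ddg$-submodule $V'\subseteq V$ and assert that, via the decomposition $\Gamma(V,\lambda)=\oplus_{i=0}^{n-1}K_i$ of Proposition \ref{prop12}(2), ``$\Gamma(V',\lambda)$ surjects onto or contains the relevant pieces.'' This is unjustified and, as stated, does not follow: the decomposition into the $K_i$ is by the loop grading (the exponent of $t^{\frac1n}$ modulo $\bZ$), which is transverse to any decomposition of $V$ into ungraded submodules; since $V'$ is generally not a graded subspace of $V$, the submodule $\Gamma(V',\lambda)=V'\otimes\bC[t^{\pm\frac1n}]$ need neither contain the simple summand $K_i\cong M$ nor map onto it, and a priori $K_i\cap\Gamma(V',\lambda)=0$ is possible. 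To repair your route you would need two facts you do not establish: (a) a finite dimensional $\bZ_n$-graded-simple module is semisimple as an ungraded $\ddg$-module (a Clifford-theory argument: if $\theta$ is the diagonalizable operator with eigenvalue $\omega_n^i$ on $V_{[i]}$, then $\theta S$ is again a simple submodule for any simple submodule $S$, so the socle is $\theta$-stable, hence graded, hence all of $V$), which yields $\Gamma(V,\lambda)=\oplus_j\Gamma(S_j,\lambda)$ with each $S_j$ simple and $M$ embedding into some summand; and (b), if you insist on your particular $V'$, an isomorphism $\Gamma(V^{\sigma},\lambda)\cong\Gamma(V,\lambda)$ (e.g.\ $v\otimes t^b\mapsto\omega_n^{-nb}\,v\otimes t^b$) identifying the summands up to twist. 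The paper avoids all of this bookkeeping: it takes an arbitrary composition series $0=V_0\subset V_1\subset\cdots\subset V_k=M/(t-1)M$ of \emph{ungraded} $\fa$-modules (each factor $V_i/V_{i-1}$ is a simple $\ddg$-module by Proposition \ref{prop-17}), uses exactness of $\Gamma(-,\lambda)$ to filter $\Gamma(M/(t-1)M,\lambda)$ by the $\Gamma(V_i,\lambda)$ with layers $\Gamma(V_i/V_{i-1},\lambda)$, and then locates the simple submodule $M\cong F(M/(t-1)M,\lambda)$ in one layer: choosing $i$ minimal with $M\cap\Gamma(V_i,\lambda)\neq0$, simplicity forces $M\subseteq\Gamma(V_i,\lambda)$ while $M\cap\Gamma(V_{i-1},\lambda)=0$, so $M$ embeds into $\Gamma(V_i/V_{i-1},\lambda)$. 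I recommend replacing your step (2) with this filtration argument, which needs no semisimplicity statement and no comparison between graded and ungraded simplicity.
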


\begin{proof} Statement (1) follows from Propositions \ref{prop-13}, \ref{prop12}, \ref{prop-17} and the fact that $M/(t-1)M$ is finite dimensional.
	
	(2). From Proposition \ref{prop-13}, $M$ is isomorphic to a submodule of $\Gamma(M/(t-1)M,\lambda)$. Since $M/(t-1)M$ is finite dimensional, it  has a composition series of $\fa$-modules  $0=V_0\subset V_1\subset\cdots \subset V_k=M/(t-1)M$ with $V_i/V_{i-1}$ are simple $\fa$-modules. Then $\Gamma(M/(t-1)M,\lambda)$ has a filtration  $$0\subset \Gamma(V_1,\lambda)\subset\cdots\subset \Gamma(V_i,\lambda)\cdots \subset \Gamma(M/(t-1)M,\lambda).$$ Since $\Gamma(M/(t-1)M,\lambda)$ has finite length, its simple $A\fL$ submodule $F(M/(t-1)M,\lambda)$ hence $M$ is isomorphic to a simple sub-quotient  of $\Gamma(V_i/V_{i-1},\lambda)\cong \Gamma(V_i,\lambda)/\Gamma(V_{i-1},\lambda)$. So we have proved (2). \end{proof}

\begin{remark}The method used in this section turns out to be very general. %For example, one can use it  to  give a short proof of the classification of simple jet modules for Witt (super)algebras.
 And its application to superconformal algebras is in process.\end{remark}

\begin{section}{Classification of quasi-finite modules}\end{section}

In this section, we will determine all simple    quasi-finite modules over the Lie algebras $\hL$ (certainly including $\fL$), i.e., to prove Theorem \ref{thm2}.

Recall that in \cite{BF}, the authors show that every cuspidal $\Vir$-module is annihilated by the operators $\Omega_{k,s}^{(m)}$ for enough large $m$.
%annihilates every cuspidal $W$-module.

\begin{lemm}[{\cite[Corollary 3.7]{BF}}]\label{Omegaoper}
For every $\ell\in\bN$ there exists $m\in\bN$ such that for all $k, s\in\bZ$ the differentiators $\Omega_{k, s}^{(m)}=\sum\limits_{i=0}^m(-1)^i\binom{m}{i}\fl_{k-i}\fl_{s+i}$ annihilate every cuspidal $\Vir$-module with a composition series of length $\ell$.
\end{lemm}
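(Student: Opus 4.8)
The plan is to realize $\Omega_{k,s}^{(m)}$ as an $m$-th order finite difference of a ``sliding product'' of Virasoro elements, and then to show that on a cuspidal module the resulting matrix coefficients are polynomial in the sliding parameter of degree controlled by the composition length, so that a difference of high enough order is forced to vanish. First I would record the elementary recursion coming from Pascal's rule $\binom{m+1}{i}=\binom{m}{i}+\binom{m}{i-1}$, namely
\begin{equation*}
\Omega_{k,s}^{(m+1)}=\Omega_{k,s}^{(m)}-\Omega_{k-1,s+1}^{(m)}.
\end{equation*}
Writing $F(i):=\fl_{k-i}\fl_{s+i}$ for the sliding product (a weight-$(k+s)$ operator), this says that $\Omega_{k,s}^{(m)}$ is exactly the $m$-th forward difference of $i\mapsto F(i)$ at $i=0$. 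Consequently, if on a given module the function $i\mapsto F(i)$, viewed as a map $M_\mu\to M_{\mu+k+s}$ between fixed finite-dimensional weight spaces, is polynomial in $i$ of degree at most $d$, then $\Omega_{k,s}^{(m)}$ annihilates that module as soon as $m>d$. The whole problem is thereby reduced to bounding this polynomial degree by a function of the length $\ell$ alone, uniformly in $k,s$.

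Next I would dispose of the simple case. Using the classification of simple cuspidal $\Vir$-modules as subquotients of the tensor density modules $\mathcal{F}(\alpha,\beta)$, on whose weight basis $\fl_n$ acts by a coefficient linear in the basis index, a direct computation shows that $F(i)=\fl_{k-i}\fl_{s+i}$ acts on a weight vector by a scalar that is a polynomial in $i$ of degree at most $2$. Hence $\Omega_{k,s}^{(m)}$ with $m\ge 3$ annihilates every simple cuspidal module, and the bound $m_0=3$ is uniform in $k,s$ and in the simple module; this settles $\ell=1$ with $m=3$.

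For the general case I would exploit a composition series $0=N_0\subset N_1\subset\cdots\subset N_\ell=M$ with simple cuspidal quotients. Since each $\Omega_{k,s}^{(3)}$ kills every quotient $N_i/N_{i-1}$, it satisfies $\Omega_{k,s}^{(3)}N_i\subseteq N_{i-1}$, so any product of $\ell$ order-$3$ differentiators annihilates $M$. To upgrade this to the vanishing of a \emph{single} differentiator of order $m(\ell)$, I would choose in each weight space a basis adapted to the filtration; in that basis $F(i)$ is block upper-triangular with diagonal blocks of degree $\le 2$, and one controls the degree of the off-diagonal (extension) blocks by induction on $\ell$, arriving at a bound $\deg_i F\le 2\ell$ on all of $M$. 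Taking $m>2\ell$ then gives $\Omega_{k,s}^{(m)}=0$ on $M$ by the finite-difference mechanism of the first paragraph.

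I expect the main obstacle to be precisely the control of the off-diagonal blocks: for a non-split extension the naive estimate need not keep the degree bounded, and one must invoke the structural fact that a cuspidal module carries a ``polynomial'' action — concretely, that it is a module over the associative localization, equivalently that it underlies a coherent-family / $AW$-type module structure — in order to guarantee that the entries of $i\mapsto\fl_{k-i}\fl_{s+i}$ stay polynomial of degree at most linear in $\ell$. Once this uniform degree bound is secured, the first step finishes the proof; note also that the resulting $m$ depends only on $\ell$, as claimed, and not on $k,s$ or on the particular module.
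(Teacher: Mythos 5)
First, a point of orientation: the paper does not prove this lemma at all --- it is quoted verbatim from \cite[Corollary 3.7]{BF}, and everything in Section 4 is built on top of it. So your attempt can only be measured against the Billig--Futorny argument, not against anything in this paper. Your first two steps are correct and are the right way to read the differentiators: Pascal's rule does give $\Omega_{k,s}^{(m+1)}=\Omega_{k,s}^{(m)}-\Omega_{k-1,s+1}^{(m)}$, so the vanishing of a single $\Omega^{(m)}_{k,s}$ for all $k,s$ with $k+s$ fixed is equivalent to the sliding product $i\mapsto\fl_{k-i}\fl_{s+i}$, viewed in the finite-dimensional spaces $\Hom(M_\lambda,M_{\lambda+k+s})$, being polynomial in $i$ of degree $<m$; and by Mathieu's classification every simple cuspidal $\Vir$-module is a subquotient of a density module $\mathcal{F}(\alpha,\beta)$ with zero central charge, on which $\fl_{k-i}\fl_{s+i}$ acts by a scalar quadratic in $i$, so $m=3$ settles $\ell=1$.

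The genuine gap is the induction step, and you have in effect flagged it yourself. From $\Omega^{(3)}_{k,s}N_j\subseteq N_{j-1}$ you only conclude that \emph{products} of $\ell$ order-three differentiators kill $M$; a single $\Omega^{(m)}$ is a difference of quadratic monomials $\fl\fl$, while a composition of differentiators is an operator of degree four in the $\fl$'s, so no formal manipulation turns the product statement into the vanishing of one higher-order differentiator. Your proposed repair --- that in a basis adapted to the filtration the off-diagonal blocks of $i\mapsto\fl_{k-i}\fl_{s+i}$ are polynomial of degree bounded linearly in $\ell$ --- is not a lemma you can invoke: it is equivalent to the statement being proved, since a priori nothing forces even the matrix entries of a single $\fl_j$ on a non-split extension to depend polynomially on $j$. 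And the ``structural fact'' you appeal to (that a cuspidal module carries a polynomial / coherent-family / $A$-module structure) is circular in this setting: both in the present paper (the $A$-cover machinery of Section 4, whose inputs Lemma \ref{omega} and Lemma \ref{covercuspidal} are derived \emph{from} Lemma \ref{Omegaoper}) and in \cite{BF}, the $A$-cover or $AW$-module structure of cuspidal modules is a \emph{consequence} of the differentiator lemma, not an available hypothesis. What is missing is an independent mechanism for the length-$\ell$ case --- in \cite{BF} this is extracted from the structure of the family of differentiators inside $U(\Vir)$ itself, using commutator identities such as $[\fl_j,\fl_{k-i}\fl_{s+i}]=(k-i-j)\fl_{k-i+j}\fl_{s+i}+(s+i-j)\fl_{k-i}\fl_{s+i+j}$, which show that differentiators of a fixed order form an $\mathrm{ad}$-stable family modulo controlled terms, combined with the uniform bound on $\dim\Hom(M_\lambda,M_\mu)$ --- and no such mechanism appears in your sketch. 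As written, your argument proves the lemma for $\ell=1$ and reduces the general case to an equivalent unproven claim.
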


In the next four lemmas we will show that any simple cuspidal $\fL$-module is a simple quotient of a simple cuspidal $A\fL$-module.

Let $M$ be a cuspidal $\fL$-module with $\dim M_\lambda\leq N,$ for all $\lambda\in\supp(M)$. Then for any $\lambda\in\supp(M)$, $\bigoplus\limits_{i\in\bZ}M_{\lambda+i}$ is a cuspidal $\Vir$-module (the center acts trivially) with length $\leq 2N$. %Then $M$ is a cuspidal $W$-module and h
Hence there exists $m\in\bN$ such that $$\Omega_{k,p}^{(m)}\in \ann_{U(\fL)}(M), \forall k,p\in\bZ.$$ Therefore,
$$\begin{aligned}f_0(a,k,p):=&[\Omega_{k,p}^{(m)},x(a)]\\
	=
\sum\limits_{i=0}^{m}&(-1)^i\binom{m}{i}\Big(\big(a+(k-i)d\big)x(a+k-i)\fl_{p+i}+\fl_{k-i}\big(a+(p+i)d\big)x(a+p+i)\Big)\\ &\in \ann_{U(\fL)}(M).\end{aligned}$$

We compute $$\begin{aligned} f_1(a,k,p):=&f_0(a+1,k,p-1))-f_0(a,k,p)\\
	=&\sum\limits_{i=0}^m(-1)^i\binom{m}{i}\Big((a+1+(k-i)d)x(a+1+k-i)\fl_{p-1+i}\\
	&-(a+(k-i)d)x(a+k-i)\fl_{p+i}+\fl_{k-i}(1-d)x(a+p+i)\Big)\in \ann_{U(\fL)}(M).\end{aligned}$$
Thus $$\begin{aligned} f_2(a,&k,p):=f_1(a,k,p)-f_1(a-1,k,p+1)\\
	=&\sum\limits_{i=0}^{m}(-1)^i\binom{m}{i}\Big((a+1+(k-i)d)x(a+1+k-i)\fl_{p-1+i}\\
	&-2(a+(k-i)d)x(a+k-i)\fl_{p+i}+
(a-1+(k-i)d)x(a-1+k-i)\fl_{p+1+i}\Big)\\
&\in \ann_{U(\fL)}(M).\end{aligned}$$
Then \begin{align*}
f_2(&a,k,p)-f_2(a-1,k+1,p)\\
=&\sum\limits_{i=0}^m(-1)^i\binom{m}{i}((1-d)x(a+1+k-i)\fl_{p-1+i}-2(1-d)x(a+k-i)\fl_{p+i}\\ &+(1-d)x(a-1+k-i)\fl_{p+1+i})\\
=&\sum\limits_{i=0}^{m+2}(-1)^i\binom{m+2}{i}(1-d)x(a+k+1-i)\fl_{p-1+i}\in \ann_{U(\fL)}(M),
\end{align*}

i.e.,

\begin{align}
[\Omega_{k,p-1}^{(m)}&,x(a+1)]-2[\Omega_{k,p}^{(m)},x(a)]+[\Omega_{k,p+1}^{(m)},x(a-1)]\nonumber\\
&-[\Omega_{k+1,p-1}^{(m)},x(a)]+2[\Omega_{k+1,p}^{(m)},x(a-1)]-[\Omega_{k+1,p+1}^{(m)},x(a-2)]\nonumber\\
=&\sum\limits_{i=0}^{m+2}(-1)^i\binom{m+2}{i}(1-d)x(a+k+1-i)\fl_{p-1+i}\in \ann_{U(\fL)}((M)\label{Omega}.
\end{align}Recall that we have assumed that $1$ is not an eigenvalue of $d$. Therefore we have established the following result.

\begin{lemm}\label{omega}
Let $M$ be a cuspidal module over $\fL$. Then there exists $m\in\bN$ such that for all $p\in\bZ, x(a)\in \fg, x\in  \dg$, the operator 
$\Omega_{x(a),p}^{(m)}=\sum\limits_{i=0}^m(-1)^i\binom{m}{i}x(a-i)\fl_{p+i}\in U(\fL)$ annihilate $M$.\end{lemm}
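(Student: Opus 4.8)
The plan is to manufacture, out of the Virasoro differentiators $\Omega_{k,p}^{(m)}$ that already annihilate $M$, a family of ``mixed'' operators whose $\fl$-factors are paired with a loop element $x(a)$. The starting point is the reduction recorded above: for cuspidal $M$ with $\dim M_\lambda\le N$, each coset sum $\bigoplus_{i\in\bZ}M_{\mu+i}$ (over the finitely many $\mu\in(\lambda+\frac1n\bZ)/\bZ$) is a cuspidal $\Vir$-module of composition length at most $2N$, so Lemma \ref{Omegaoper} yields a single $m\in\bN$ with $\Omega_{k,p}^{(m)}\in\ann_{U(\fL)}(M)$ for all $k,p\in\bZ$. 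Since each $\Omega_{k,p}^{(m)}$ lies in $U(\Vir)$, annihilates all of $M$, and $x(a)M\subseteq M$, the commutators $f_0(a,k,p)=[\Omega_{k,p}^{(m)},x(a)]$ annihilate $M$ as well; this is the device that transports a purely Virasoro annihilation into one that sees $\fg$.

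Next I would carry out the iterated finite differences displayed above, namely $f_1=f_0(a+1,k,p-1)-f_0(a,k,p)$, then $f_2=f_1(a,k,p)-f_1(a-1,k,p+1)$, and finally $f_2(a,k,p)-f_2(a-1,k+1,p)$. These shifts are engineered to have two simultaneous effects. First, the scalar-operator factors $a+jd$ that enter through each bracket $[\fl_j,x(a)]=(a+jd)x(a+j)$ telescope: in the last difference the paired coefficients collapse via $(a+1+(k-i)d)-(a+(k+1-i)d)=1-d$, so every surviving term is multiplied by the single operator $1-d$ and all dependence on $a$ and on higher powers of $d$ disappears. Second, the three-term pattern produced by the differences lets Pascal's rule $\binom{m}{i}+2\binom{m}{i-1}+\binom{m}{i-2}=\binom{m+2}{i}$ consolidate the weights. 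The outcome is precisely equation (\ref{Omega}): the operator $\sum_{i=0}^{m+2}(-1)^i\binom{m+2}{i}(1-d)x(a+k+1-i)\fl_{p-1+i}$ annihilates $M$.

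Finally I would invoke the standing assumption (Condition (1.1)) that $1$ is not an eigenvalue of $d$. Because $d$ commutes with $\sigma$ it preserves each graded piece $\dg_{[j]}$, and there $1-d$ is bijective; hence as $x$ runs through $\dg_{[j]}$ the element $(1-d)x$ runs through all of $\dg_{[j]}$. Writing $b=a+k+1$ and $q=p-1$ and letting $k$ range over $\bZ$, equation (\ref{Omega}) therefore reads $\Omega_{y(b),q}^{(m+2)}\in\ann_{U(\fL)}(M)$ for every $y\in\dg$, every admissible index $b$, and every $q\in\bZ$; relabelling and taking $m+2$ for the $m$ in the statement gives the lemma.

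I expect the one genuinely delicate point to be the combinatorial bookkeeping of the second paragraph: verifying that this particular pattern of shifts in $f_1,f_2$ together with the final $k$-difference is exactly the one that both cancels every term whose coefficient still carries $a$ or $d^{\ge2}$ and leaves the clean factor $1-d$ with the consolidated binomials $\binom{m+2}{i}$. Once equation (\ref{Omega}) is in hand, the concluding invertibility-and-reindexing step is purely formal, so the whole force of the lemma resides in choosing the right differences and in the input provided by Lemma \ref{Omegaoper}.
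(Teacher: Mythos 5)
Your proposal is correct and follows the paper's own proof essentially verbatim: the same reduction via Lemma \ref{Omegaoper} to get a uniform $m$ with $\Omega_{k,p}^{(m)}\in\ann_{U(\fL)}(M)$, the same commutator $f_0$ and iterated differences $f_1,f_2$ and the final $k$-shift producing equation (\ref{Omega}) with the clean factor $1-d$ and the consolidated binomials $\binom{m+2}{i}$, and the same appeal to Condition (1.1) to invert $1-d$ and relabel $m+2$ as $m$. Your added remark that $d$ commutes with $\sigma$ and hence $1-d$ is bijective on each graded piece $\dg_{[j]}$ is a minor precision the paper leaves implicit; no gap remains.
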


Now let $M$ be a cuspidal simple $\fL$-module. Then $\fg M$ is a $\fL$ submodule, which has to be zero or $M$. If $\fg M=0$, then $M$ is a simple cuspidal $W$ module which is clearly described in \cite{Ma}.
Now we assume that $\fg M=M$.
Consider $\fg$ as the adjoint $\fL$-module. Then we have the tensor product $\fL$-module  $\fg\otimes M$, which  becomes an $A\fL$-module under
\[
x\cdot (y\otimes u)=(xy)\otimes u, \forall x\in A,y\in\fg, u\in M.
\]
This is not hard to verify.

For any $x\otimes t^b\in \fg$ with $b\in \frac 1n \bZ$ and $k\in\bZ$, by
$t^k(xt^b)$
 we mean $xt^{k+b}$.
Denote $$\fK(M)=\{\sum\limits_{i}x_i\otimes v_i\in \fg\otimes M\,|\,\sum\limits_{i}(t^kx_i)v_i=0, \forall k\in \bZ\}.$$ It is straightforward but tedious to verify the following result.

\begin{lemm}\label{submod} The subspace $\fK(M)$ is an $A\fL$ submodule of $\fg\otimes M$. 
\end{lemm}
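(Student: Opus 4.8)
The plan is to show $\fK(M)$ is closed under the actions of $A$, $W$, and $\fg$, verifying each generator type separately. First I would check $A$-invariance, which is the most transparent: for $\sum_i x_i\otimes v_i\in\fK(M)$ and any $j\in\bZ$, the action $t^j\cdot(\sum_i x_i\otimes v_i)=\sum_i(t^jx_i)\otimes v_i$, and the defining condition $\sum_i(t^k(t^jx_i))v_i=\sum_i(t^{k+j}x_i)v_i=0$ holds for all $k\in\bZ$ precisely because the original element satisfies the condition for the shifted index $k+j$. Hence $A\cdot\fK(M)\subseteq\fK(M)$.

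Next I would handle the $\fg$-action. For $yt^b\in\fg$ and $\sum_i x_i\otimes v_i\in\fK(M)$, the tensor-product action gives $(yt^b)\cdot(\sum_i x_i\otimes v_i)=\sum_i[yt^b,x_i]\otimes v_i+(-1)^{|y||\cdot|}\sum_i x_i\otimes (yt^b)v_i$. Applying the defining operator $\sum_k(t^k(\cdot))$ to this, I would use the Leibniz-type relation coming from the module structure: the bracket term $\sum_i t^k[yt^b,x_i]\,v_i$ and the term $\sum_i(t^kx_i)(yt^b)v_i$ should recombine, via $[yt^b,t^kx_i]=t^k[yt^b,x_i]$ (note $A$ is central-acting on the adjoint piece in the relevant sense) together with the fact that $yt^b$ acts on $M$, so that the whole expression collapses to zero using $\sum_i(t^{k'}x_i)v_i=0$ for the appropriate indices $k'$. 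This is the step where the precise interplay between the adjoint action on $\fg\otimes M$ and the $\fg$-action on $M$ must be tracked carefully.

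Finally I would verify $W$-invariance, i.e.\ closure under each $\fl_j$. Here $\fl_j\cdot(\sum_i x_i\otimes v_i)=\sum_i[\fl_j,x_i]\otimes v_i+\sum_i x_i\otimes \fl_j v_i$, and applying $\sum_i t^k(\cdot)$ I expect to combine $[\fl_j,\cdot]$ with the relation $[\fl_j,t^kx_i]=t^k[\fl_j,x_i]+$ (a correction from $[\fl_j,t^k]=kt^{j+k}$ inside $A\fL$), so the two contributions cancel against each other modulo the defining relations of $\fK(M)$ evaluated at shifted indices. The algebraic identity $[\fl_j,t^kx_i]=(\text{shift in }k)+(\text{adjoint bracket})$ is exactly what makes the cancellation work.

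\medskip

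\noindent\textbf{The main obstacle.} The hard part will be the $\fg$-action (and to a lesser extent the $\fl_j$-action): the verification is not formal because the defining condition of $\fK(M)$ involves the $A$-module structure ``$t^k$ acting on the $\fg$-factor'' while the tensor action produces brackets and $M$-actions simultaneously. One must carefully commute $t^k$ past the bracket $[\,\cdot\,,x_i]$ and past the $\fg$-action on $M$, keeping track of the Koszul signs $(-1)^{|x||y|}$ in the super setting, to see that everything reorganizes into a sum of the original vanishing expressions evaluated at various shifted indices $k$. The excerpt itself flags this by calling the verification ``straightforward but tedious,'' so I would present the $A$-part explicitly and then indicate how the bracket-and-shift identities of $A\fL$ reduce the $\fg$- and $W$-parts to the defining relations, rather than writing out every term.
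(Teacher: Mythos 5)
Your proposal is correct and is exactly the direct verification the paper leaves implicit (the paper only remarks that the check is ``straightforward but tedious''): closure under $A$ is the index shift you give; closure under $yt^b\in\fg$ follows since $t^k[y,x_i]=[y,t^kx_i]$ and then the bracket term and the $M$-action term recombine, Koszul signs included, into $y\cdot\bigl(\sum_i(t^kx_i)v_i\bigr)=0$; and closure under $\fl_j$ uses $[\fl_j,t^kx_i]=t^k[\fl_j,x_i]+kt^{j+k}x_i$, so the sum reorganizes as $\fl_j\bigl(\sum_i(t^kx_i)v_i\bigr)-k\sum_i(t^{j+k}x_i)v_i=0$. One minor imprecision to fix when writing it out: in the $\fg$-step no shifted indices $k'$ actually occur --- the cancellation happens at the same $k$ --- whereas the shift $k\mapsto k+j$ from $[\fl_j,t^k]=kt^{j+k}$ is needed only in the $W$-step.
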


 Hence we have the $A\fL$-module $\widehat{M}=(\fg\otimes M)/\fK(M)$. Also, we have a $\fL$-module epimorphism defined by
\[\pi: \widehat{M}\to \fg M; x\otimes y+\fK(M)\mapsto xy, \forall x\in \fg, y\in M.\]
$\widehat{M}$ is called the $A$-cover of $M$.

\begin{lemm}\label{covercuspidal}
For any  simple cuspidal $\fL$-module $M$ with $\fg M\ne 0$, the $A\fL$-module $\widehat{M}$ is cuspidal.
\end{lemm}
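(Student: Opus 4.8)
The plan is to produce enough linear relations inside $\widehat{M}=(\fg\otimes M)/\fK(M)$ to confine, within each fixed weight $\mu$, the $\fg$-degree to a bounded window. Fix $m\in\bN$ as in Lemma \ref{omega}, so that $\Omega^{(m)}_{x(a),p}$ annihilates $M$ for all $x\in\dg$, $a\in\frac1n\bZ$, $p\in\bZ$. The engine of the proof is that, for all such data and all $v\in M$, the element
\[
w_{a,p}:=\sum_{i=0}^{m}(-1)^i\binom{m}{i}\,x(a-i)\otimes \fl_{p+i}v\ \in\ \fg\otimes M
\]
lies in $\fK(M)$: for every $k\in\bZ$ one has $\sum_{i=0}^m(-1)^i\binom mi (t^k x(a-i))\,\fl_{p+i}v=\Omega^{(m)}_{x(a+k),p}v=0$, which is exactly the defining condition for membership in $\fK(M)$. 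Writing $\overline{\,\cdot\,}$ for the image in $\widehat{M}$, this gives $\overline{w_{a,p}}=0$.

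First I would record that $\widehat{M}$ is a weight module, since $\fg\otimes M$ is one and $\fK(M)$ is a graded subspace; thus $\widehat M_\mu=\sum_{a\in\frac1n\bZ}\overline{\fg_a\otimes M_{\mu-a}}$. As $\dim\fg_a=\dim\dg_{[na]}\le\dim\dg$ and $\dim M_{\mu-a}\le N$ by cuspidality of $M$, each single strip $\overline{\fg_a\otimes M_{\mu-a}}$ has dimension at most $(\dim\dg)N$; the only issue is that infinitely many $a$ contribute. The key step is to kill all but boundedly many strips using $\overline{w_{a,p}}=0$. Taking $p=0$ and $v\in M_{\mu-a}$, the $i=0$ term becomes $(\mu-a)\,x(a)\otimes v$ because $\fl_0 v=(\mu-a)v$, while the remaining terms have $\fg$-degrees $a-1,\dots,a-m$; hence for $a\ne\mu$,
\[
\overline{x(a)\otimes v}=\frac{-1}{\mu-a}\sum_{i=1}^{m}(-1)^i\binom mi\,\overline{x(a-i)\otimes \fl_i v},\tag{$\dagger$}
\]
expressing any strip of high degree through strips of strictly smaller degree. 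Symmetrically, taking $p=-m$ and writing the target degree as $b=a-m$ isolates the $i=m$ term as $(-1)^m(\mu-b)\,x(b)\otimes v$, yielding for $b\ne\mu$ a relation $(\dagger')$ that expresses any strip of low degree through strips of strictly larger degrees $b+1,\dots,b+m$. The point I would stress is that the substitutions $p=0$ and $p=-m$ convert the extreme coefficient into the scalar operator $\fl_0$, so $(\dagger)$ and $(\dagger')$ apply to an arbitrary $v$ rather than only to vectors of the form $\fl_q(\cdot)$; this is what makes the degree reduction genuine.

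Finally I would run the reduction with a fixed strategy that cannot cycle: apply $(\dagger)$ only to strips of degree $a\ge\mu+m$ and $(\dagger')$ only to strips of degree $a\le\mu-m$. Since $(\dagger)$ replaces a degree $a\ge\mu+m$ by degrees $a-1,\dots,a-m$, all of which are still $\ge\mu$, iterating it pushes every high strip into the band $[\mu,\mu+m)$ without ever crossing below $\mu$; symmetrically $(\dagger')$ pushes every low strip into $(\mu-m,\mu]$ without crossing above $\mu$. Thus the two procedures act on disjoint inputs and never feed one another, so there is no infinite descent and
\[
\widehat M_\mu=\sum_{a\in(\mu-m,\,\mu+m)\cap\frac1n\bZ}\overline{\fg_a\otimes M_{\mu-a}}.
\]
This window contains fewer than $2mn$ values of $a$, whence $\dim\widehat M_\mu< 2mn\,(\dim\dg)\,N$ uniformly in $\mu$, so $\widehat M$ is cuspidal. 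The main obstacle is precisely the middle step: recognising the specializations $p=0$ and $p=-m$ that turn $\overline{w_{a,p}}=0$ into two genuine, two-sided degree-reduction rules, together with the choice of thresholds $a\ge\mu+m$ and $a\le\mu-m$ that keeps the reduction from cycling across $\mu$; the weight decomposition and the per-strip bound from cuspidality of $M$ are then routine.
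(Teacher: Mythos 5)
Your proof is correct and is essentially the paper's own argument: your membership relation $w_{a,p}\in\fK(M)$ is the paper's equation (4.2), your rule $(\dagger)$ (the $p=0$ specialization) is exactly the induction step in the paper's proof that $\fg\otimes M=S+\fK(M)$, your explicit $p=-m$ rule $(\dagger')$ makes precise what the paper dismisses as ``the proof for $l<0$ is similar,'' and leaving the singular strip $a=\mu$ untouched inside the retained window plays precisely the role of the summand $\fg\otimes M_0$ in the paper's subspace $S$. The only cosmetic point is that comparisons such as $a\ge\mu+m$ need a real reference point: they are literally well-defined exactly when $0\in\Supp(M)$ (forcing $\mu\in\frac1n\bZ$), which is the only case where the singular strip exists, while otherwise $a=\mu$ never occurs and any real cutoff yields the same uniform bound.
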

\begin{proof} Suppose that $\Supp(M)\subseteq \lambda+\frac{1}{n}\bZ$ and $\dim M_\mu\leq r$ for all $\mu\in\Supp(M)$. From Lemma \ref{omega}, there exists $m\in\bN$ such that for all $p\in\bZ, x(a)\in \fg, x\in  \dg$, the operators
$\Omega_{x(a),p}^{(m)}=\sum\limits_{i=0}^m(-1)^i\binom{m}{i}x(a-i)\fl_{p+i}$ annihilate $M$. Hence
\begin{equation}\label{4.2}\sum\limits_{i=0}^m(-1)^i\binom{m}{i}x(a-i)\otimes \fl_{p+i}v\in \fK(M),\forall v\in M,x(a)\in A.\end{equation}

Let $S=\sum_{j=0}^{n-1}\sum_{i=0}^{m-1} (\dg_{[j]}\otimes t^{\frac jn -i})\otimes M+\fg\otimes M_0$. It is not hard to show that $S$ is a $\bC \fl_0$ submodule of $\fg\otimes M$ with \[\dim S_\mu\leq 2mr\dim \dg , \forall \mu\in\lambda+\frac{1}{n}\bZ.\]
We will prove that $\fg\otimes M=S+\fK(M)$, from which we know $\widehat{M}$ is cuspidal. Indeed, we will prove by induction on $i$ that for all $x(\frac jn)\in \fg$, $u\in M_\mu$ with $\mu \ne 0$, $j=0,1,\ldots,n-1$,
\[
x(\frac jn+l)\otimes u_{\mu}\in S+\fK(M),\forall l\in \bZ.
\]
We only prove the claim for $l>m$, the proof for $l<0$ is similar. Then by \eqref{4.2} and induction hypothesis, we have
\begin{align*}x(\frac jn+l)\otimes u &=\frac 1\mu x(\frac jn+l)\otimes \fl_0u\\ &=\frac 1\mu \Big(\sum\limits_{i=0}^m(-1)^i\binom{m}{i}x(\frac jn+l-i)\otimes \fl_i u-\sum\limits_{i=1}^{m}(-1)^i\binom{m}{i}x(\frac jn+l-i)\otimes \fl_i u\Big)\\ &\in S+\fK(M).\end{align*}
\end{proof}

Now we can classify all simple cuspidal $\fL$-modules.
\begin{theo}\label{cuspidal} Any simple cuspidal $\fL$-module is a simple quotient of a simple cuspidal $A\fL$-module.

\end{theo}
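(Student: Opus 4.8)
The plan is to build on the $A$-cover machinery assembled in Lemmas~\ref{omega} through \ref{covercuspidal}. Given a simple cuspidal $\fL$-module $M$, I first dispose of the degenerate case $\fg M=0$, where $M$ is a simple cuspidal $W$-module and the statement follows from the known description (such a module is visibly a quotient of an $A\fL$-module on which $\fg$ acts trivially). So I would assume $\fg M=M$, i.e.\ $\fg M\neq 0$, which is precisely the hypothesis under which the $A$-cover $\widehat{M}=(\fg\otimes M)/\fK(M)$ and the surjection $\pi:\widehat{M}\to \fg M=M$ were constructed.

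The heart of the argument is then packaged: by Lemma~\ref{covercuspidal} the $A\fL$-module $\widehat{M}$ is cuspidal, and $\pi$ is a nonzero $\fL$-module (indeed $A\fL$-module) epimorphism onto $M$. Since $M$ is simple, $\pi$ exhibits $M$ as a quotient of $\widehat{M}$. The one point that needs care is that cuspidality and the existence of the cover give a cuspidal $A\fL$-module surjecting onto $M$, but not yet a \emph{simple} one. To produce a simple cuspidal $A\fL$-module, I would pass to an appropriate subquotient of $\widehat{M}$: choose a submodule $\widehat{M}'\subseteq\widehat{M}$ maximal among $A\fL$-submodules contained in $\ker\pi$, so that $\widehat{M}/\widehat{M}'$ is a cuspidal $A\fL$-module whose socle still surjects onto $M$; then any simple $A\fL$-subquotient whose image under the induced map is nonzero surjects onto the simple module $M$. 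Because $\widehat{M}$ has finite length as an $A\fL$-module (its weight spaces are uniformly bounded, being cuspidal, and $M/(t-1)M$ is finite dimensional), such a simple cuspidal $A\fL$-subquotient mapping onto $M$ exists.

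Concretely, the cleanest route is: among all $A\fL$-submodules $K$ of $\widehat{M}$ with $\pi(K)=M$, pick one, say $K_0$, that is minimal (possible by finite length); then $\pi|_{K_0}:K_0\to M$ is surjective with $\ker(\pi|_{K_0})$ an $A\fL$-submodule of $K_0$ not mapping onto $M$, and by minimality of $K_0$ no proper $A\fL$-submodule of $K_0$ surjects onto $M$. Quotienting $K_0$ by a maximal $A\fL$-submodule contained in $\ker(\pi|_{K_0})$ yields a simple cuspidal $A\fL$-module $N$ together with a nonzero, hence surjective, $\fL$-module map $N\to M$. This $N$ is the desired simple cuspidal $A\fL$-module of which $M$ is a quotient.

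The main obstacle, and the only genuinely substantive step, is the cuspidality of the cover $\widehat{M}$, i.e.\ the content of Lemma~\ref{covercuspidal}, which rests on the $\Omega$-annihilation identity of Lemma~\ref{omega} and crucially on the standing hypothesis that $1$ is not an eigenvalue of $d$ (this is what makes $(1-d)$ invertible so that the operators in \eqref{Omega} genuinely control $\fg\otimes M$ modulo $\fK(M)$). Since that lemma is already established, the remaining work in this theorem is the essentially formal finite-length subquotient extraction described above; the reduction to a \emph{simple} cuspidal $A\fL$-module is bookkeeping rather than a new difficulty.
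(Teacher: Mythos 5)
Your proposal is correct and takes essentially the same route as the paper: dispose of the case $\fg M=0$, invoke Lemma~\ref{covercuspidal} for cuspidality of the cover $\widehat{M}$ with its $\fL$-epimorphism $\pi$ onto $M$, and extract a simple $A\fL$-subquotient surjecting onto $M$ by finite length (the paper takes a composition series and the minimal $k$ with $\pi(\widehat{M}^{(k)})\neq 0$, which is the same bookkeeping as your minimal $K_0$). One harmless slip: $M$ is not an $A\fL$-module, so $\pi$ is only $\fL$-linear and $\ker(\pi|_{K_0})$ is only an $\fL$-submodule, not an $A\fL$-submodule (this is precisely why $\fK(M)$ is defined using all powers $t^k$ rather than just $t^0$); your construction survives because it only quotients by a maximal $A\fL$-submodule \emph{contained in} that kernel.
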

\begin{proof} It is obvious if $\fg M=0$. So we may assume that $\fg M=M$ and there is an epimorphism $\pi: \widehat{M}\to M$. From Lemma \ref{covercuspidal}, $\widehat{M}$ is cuspidal. Hence $\widehat{M}$ has a composition series of $A\fL$ submodules:
\[
0=\widehat{M}^{(0)}\subset\widehat{M}^{(1)}\subset\cdots\subset\widehat{M}^{(s)}=\widehat{M}
\]
with $\widehat{M}^{(i)}/\widehat{M}^{(i-1)}$ being simple $A\fL$-modules. Let $k$ be the minimal integer such that $\pi(\widehat{M}^{(k)})\neq0$. Then we have $\pi(\widehat{M}^{(k)})=M, \widehat{M}^{(k-1)}=0$ since $M$ is simple. So we have an $\fL$-epimorphism from the simple $A\fL$-module $\widehat{M}^{(k)}/\widehat{M}^{(k-1)}$ to $M$.
\end{proof}

%\begin{theo}Let $V$ be any simple cuspidal weight module over $\fL$. Then $V$ is isomorphic to a simple quotient of a simple bounded weight $A\fL$-module.\end{theo}

%Let $\hat{\fL}=\fL\oplus \mathfrak{C}$ be any central extension of $\fL$ with ${\rm ad}_{\hat{\fL}} \fl_0(\fL)\subseteq \fL$.

The following result for Virasoro algebra is well-known.

\begin{lemm}\label{weightupper}
Let $M$ be a   quasi-finite weight module over the Virasoro algebra with $\mathrm{supp}(M)\subseteq\lambda+\bZ$. If for any $v\in M$, there exists $N(v)\in\bN$ such that $\fl_iv=0$ for all $i\geq N(v)$, then $\mathrm{supp}(M)$ is upper bounded.
\end{lemm}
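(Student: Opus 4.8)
The plan is to argue by contradiction, reducing the statement to the classification of simple quasi-finite (Harish-Chandra) modules over $\Vir$ and then using the finite-dimensionality of the weight spaces as a counting device. So I would assume that $\supp(M)$ is not bounded above and work toward a contradiction.

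The first step is to note that the hypothesis is inherited by subquotients: if $\fl_i v=0$ for all $i\ge N(v)$, then the image of $v$ in any subquotient of $M$ is annihilated by the same modes. Hence every simple subquotient $S$ of $M$ is again a simple quasi-finite $\Vir$-module in which each vector is killed by all sufficiently high $\fl_i$. The key step is then to show that such an $S$ must be a highest weight module (or the trivial module). Here I would invoke the well-known classification \cite{Ma} of simple quasi-finite $\Vir$-modules: each is a highest weight module, a lowest weight module, or a module of the intermediate series. The hypothesis excludes the last two cases. Indeed, in a lowest weight module the lowest weight vector $w$ satisfies $\fl_i w\neq 0$ for all $i\ge 1$, while in a nontrivial intermediate-series module every weight vector $v_\mu$ satisfies $\fl_i v_\mu\neq 0$ for all large $i$, because the relevant structure constant is a nonzero affine function of $i$; both contradict the inherited hypothesis. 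Thus $S$ is a highest weight module, so $\supp(S)$ is bounded above, and if $S$ is nontrivial then $\supp(S)=h-\bZ_+$ is unbounded below.

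Finally, I would derive the contradiction by a multiplicity count. Since $\supp(M)$ is assumed unbounded above and every nonzero weight vector survives in some simple subquotient, there must be infinitely many pairwise non-isomorphic nontrivial highest weight subquotients, whose highest weights form a sequence $h_1<h_2<\cdots\to+\infty$. Now fix any single weight $\mu_0\in\lambda+\bZ$. All but finitely many $h_j$ exceed $\mu_0$, and each corresponding highest weight factor, being unbounded below, contains $\mu_0$ in its support and so contributes at least $1$ to $\dim M_{\mu_0}$. Summing over these infinitely many distinct factors forces $\dim M_{\mu_0}=\infty$, contradicting the quasi-finiteness of $M$. This yields the desired conclusion that $\supp(M)$ is bounded above.

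I expect the main obstacle to lie in the last paragraph when $M$ has infinite length: one must make precise, without a finite composition series, that in any composition series the number of factors whose support meets the fixed weight $\mu_0$ is bounded by $\dim M_{\mu_0}$, so that infinitely many distinct such factors genuinely force the weight space to be infinite-dimensional. The secondary delicate point is the exclusion of the intermediate-series case, where one must verify carefully that no weight vector is annihilated by all large modes $\fl_i$.
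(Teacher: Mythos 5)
You should first note that the paper does not actually prove this lemma: it is stated with the remark ``The following result for Virasoro algebra is well-known,'' so there is no in-paper argument to compare with, and your proposal has to stand on its own. Your overall strategy --- pass the hypothesis to simple subquotients, invoke Mathieu's classification, exclude the lowest weight and intermediate-series cases, and then count highest weight factors against $\dim M_{\mu_0}$ --- is viable and close to how such statements are usually handled. But two of your supporting claims are not correct as stated. First, it is false that in a nontrivial simple lowest weight module the lowest weight vector $w$ satisfies $\fl_i w\neq 0$ for all $i\geq 1$: for lowest weight $(h,c)=(0,c)$ with $c\neq 0$, the vector $\fl_1 w$ is singular in the lowest weight Verma module, so $\fl_1 w=0$ in the simple quotient. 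The correct (and sufficient) statement is that $w$ cannot be killed by \emph{all} large modes: since $\fl_{-i}w=0$, one has $\fl_{-i}\fl_i w=\bigl(2ih+\frac{i-i^3}{12}c\bigr)w$, and if $\fl_i w=0$ for all $i\geq N$ this polynomial identity in $i$ forces $h=c=0$, i.e.\ $S$ trivial. (Similarly, in the intermediate series $V_{a,b}$ the structure constant $a+k+bi$ is \emph{constant} in $i$ when $b=0$, not a nonconstant affine function; the exclusion still works because that constant is nonzero on the nontrivial simple subquotient.)

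Second, your inference ``being unbounded below, $\supp(S)$ contains $\mu_0$'' is a genuine non sequitur: the support of a nontrivial simple highest weight module is contained in $h-\bZ_+$ but need not a priori equal it --- indeed $L(0,c)$ omits the weight $h-1$ --- and unboundedness below alone does not place any \emph{fixed} weight in the support. To close this you must invoke the known structure of simple highest weight $\Vir$-modules (Feigin--Fuchs): $\dim L(h,c)_{h-n}\geq 1$ for every $n\geq 2$ unless $L(h,c)$ is trivial, so all but finitely many of your factors do contain $\mu_0$. Finally, the infinite-length issue you flagged is real but repairable by standard means: (i) simple subquotients through any prescribed weight $\mu\in\supp(M)$ exist by Zorn's lemma --- take $0\neq v\in M_\mu$, $X=U(\Vir)v$, and $Y\subseteq X$ maximal among submodules not containing $v$; any submodule properly containing $Y$ contains $v$ and hence all of $X$, so $X/Y$ is simple with $\mu$ in its support; (ii) the bound ``at most $\dim M_{\mu_0}$ pairwise non-isomorphic simple subquotients can have $\mu_0$ in their support'' follows from a local composition series at the weight $\mu_0$ (in the sense of Deodhar--Gabber--Kac) together with a modular-lattice refinement argument, since each such factor forces a strict jump of the $\mu_0$-weight space along a suitable finite chain of submodules. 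With these three repairs --- the corrected lowest-weight exclusion, the multiplicity-positivity fact for $L(h,c)$, and the local composition series count --- your proof becomes complete.
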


Now we are ready to determine all simple quasi-finite modules  over $(d,\sigma)$-twisted Affine-Virasoro superalgebra $\hL=\hL(\dg,d,\sigma)$ defined in (\ref{hL}).

\begin{theo}\label{noncuspidal}Any simple  quasi-finite weight module over $\hL$ is a highest weight module, a lowest weight module, or a cuspidal weight module. \end{theo}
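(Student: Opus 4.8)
I would split the statement into an easy ``support'' dichotomy and a hard ``multiplicity'' bound. Fix $\lambda$ with $\Supp(M)\subseteq\lambda+\frac1n\bZ$; this coset is order-isomorphic to $\bZ$, so ``bounded above/below'' are meaningful. If $\Supp(M)$ is bounded above, let $\mu$ be its largest weight; any nonzero $v\in M_\mu$ satisfies $\hL_a v\in M_{\mu+a}=0$ for $a>0$, so $v$ is killed by the whole positive part, and by simplicity $M=U\big(\bigoplus_{a<0}\hL_a\big)U(\hL_0)v$ is a highest weight module. Dually, $\Supp(M)$ bounded below yields a lowest weight module. Hence everything reduces to showing: if $\Supp(M)$ is unbounded in both directions, then $M$ is cuspidal.

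So assume $\Supp(M)$ is unbounded above and below. The bridge I would use is Lemma \ref{weightupper}: if $\fl_i$ acts locally nilpotently on $M$ as $i\to+\infty$ (that is, every vector is killed by $\fl_i$ for all large $i$), then, applying the lemma to each of the $\Vir$-components $M^{(j)}=\bigoplus_{i\in\bZ}M_{\lambda+\frac jn+i}$, the support of $M$ would be bounded above, contradicting two-sided unboundedness; the symmetric statement holds as $i\to-\infty$. Consequently it suffices to prove that a non-cuspidal such $M$ must have $\fl_{>0}$ or $\fl_{<0}$ acting locally nilpotently, i.e. to convert unbounded multiplicities into one-sided local nilpotency.

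For this I would bring in the Billig--Futorny operators. By Lemma \ref{Omegaoper} a cuspidal $\Vir$-module of bounded length is annihilated by $\Omega^{(m)}_{k,s}$ for $m\gg0$, and Lemma \ref{omega} gives the corresponding mixed operators $\Omega^{(m)}_{x(a),p}$ annihilating any cuspidal $\fL$-module. The plan is to run a Mathieu-style analysis of the growth of $\dim M_\mu$ along the coset: using these differential operators together with the brackets $[\fl_i,x(a)]=(a+id)x(a+i)$ and $[x(a),y(b)]=[x,y](a+b)$, one controls how a weight space can fail to inject into its neighbours, and one shows that if no uniform bound exists then the action of $\fl_i$ in one of the two directions must be locally nilpotent. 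Combined with the previous paragraph this produces the required contradiction, so $M$ is cuspidal.

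The main obstacle is this last conversion. The tempting shortcut --- restrict to $\Vir$, split $M^{(j)}$ into simple Harish-Chandra factors, and invoke Mathieu's trichotomy \cite{Ma} --- fails, because as a $\Vir$-module $M^{(j)}$ need not have finite length: its multiplicities can grow even when every composition factor is of intermediate series, so the presence of a highest- or lowest-weight factor cannot be read off directly. One therefore has to argue at the level of the whole module, using the $\Omega$-operators of \cite{BF} to bound multiplicity growth and the simplicity of $M$ over the full algebra $\hL$ (through the $\fg$-action, which couples the different components $M^{(j)}$) to rule out the mixed two-sided growth that would otherwise be compatible with two-sided-unbounded support. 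Carrying out this quantitative estimate is where the real work lies.
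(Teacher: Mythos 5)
Your outer skeleton agrees with the paper's: one-sided bounded support gives a highest or lowest weight module, and Lemma \ref{weightupper} is the bridge that converts eventual local nilpotency of the $\fl_i$, $i\gg 0$, on every vector into upper-bounded support. The genuine gap is the core step you explicitly leave as a ``plan'': converting non-cuspidality into one-sided local nilpotency. The tool you propose for it cannot work as stated, because Lemmas \ref{Omegaoper} and \ref{omega} have \emph{cuspidality as a hypothesis} --- they assert that cuspidal modules are annihilated by the operators $\Omega^{(m)}_{k,s}$, resp.\ $\Omega^{(m)}_{x(a),p}$ --- so they say nothing about a module not yet known to be cuspidal, and invoking them to ``bound multiplicity growth'' of a putatively non-cuspidal $M$ is circular. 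In the paper these operators play no role in Theorem \ref{noncuspidal} at all; they enter only afterwards, in the $A$-cover classification of cuspidal modules and in the proof of Lemma \ref{Z=0}.

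The missing idea is an elementary pigeonhole on dimensions, with no differentiators whatsoever. Since $M$ is quasi-finite, the space $\Hom_\bC\big(\oplus_{i=0}^{2n}\hL_{k+\frac in},\ \oplus_{i=0}^{3n}M_{\lambda+\frac in}\big)$ is finite dimensional of fixed size once $\lambda$ is suitably retaken, while non-cuspidality lets one choose a weight $\lambda+a$ with $a=-k+\frac jn$, $0\le j<n$, whose multiplicity exceeds that size; without loss of generality $k>0$. The evaluation map $\kappa(v)(x)=xv$ on $M_{\lambda+a}$ then has a nonzero kernel vector $v$, annihilated by $\hL_{k+\frac in}$ for $0\le i\le 2n$. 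Since $\fl_k$, $\fl_{k+1}$ and the components $\fg_{k+\frac in}$, $i=0,\dots,n-1$, generate a subalgebra containing all $\fL_b$ with $b\ge a_0$, the vector $v$ is killed by the entire positive part in degrees $\ge a_0$; writing $M=U(\hL)v$ by simplicity and straightening $\fl_k$ to the right in PBW monomials shows every $u\in M$ satisfies $\fl_k u=0$ for all sufficiently large $k$, whence Lemma \ref{weightupper}, applied to each $W$-component $\oplus_{j\in\bZ}M_{\lambda+\frac in+j}$, bounds $\Supp(M)$ above and $M$ is a highest weight module (the case $k<0$ symmetrically yields a lowest weight module). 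Your observation that the restriction to $\Vir$ need not have finite length is correct, but this counting argument sidesteps that difficulty entirely; without it, or some substitute for it, your proposal does not yet constitute a proof.
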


\begin{proof}Let $M$ be a simple quasi-finite $\hat{\fL}$  with $\lambda\in \supp(M)$. Then $\supp(M)\subseteq \lambda +\frac 1n \bZ$. Suppose that $M$ is  not cuspidal. By retaking $\lambda$ we can find   some $a=-k+\frac jn\in \frac 1n\bZ$ with $k\in \bZ$ and $0\le j<n$, such that $$\dim M_{\lambda+a}\ge \dim \Hom_{\bC}(\oplus_{i=0}^{2n} \hL_{k+\frac in},\oplus_{i=0}^{3n}  M_{\lambda+\frac in}).$$ Without loss of generality, we may assume that $k>0$. Then the linear map $$\kappa:  M_{\lambda+a}\rightarrow \Hom_{\bC}(\oplus_{i=0}^{2n} \hL_{k+\frac in},\oplus_{i=0}^{3n}  M_{\lambda+\frac in})$$ defined by $ \kappa(v)(x)=xv$ has nonzero kernel. So there exists a nonzero homogeneous $v\in M_{\lambda+a}$ such that $(\sum_{i=0}^{2n} \hL_{k+\frac in}) v=0$. We can find $a_0\in \bZ_+$, such that $\oplus_{ a\ge a_0}\fL_{a}$ is contained in the subalgebra generated by $\fl_{k},\fl_{k+1}$ and $\fg_{k+\frac in},i=0,1,\ldots, n-1$. So $\oplus_{ a\ge a_0}\fL_{a}\subseteq \ann_{U(\hL)}(v)$. By exchange $\fl_{k}$ from left  to right, we have $$\fl_{k}\fL_{a_1}\cdots \fL_{a_l}\subseteq \sum_{i\ge k-|a_1|-\cdots-|a_l|} U(\hL)\fL_{i}$$ for sufficient large $k$, which implies any $u\in U(\hL) v$ is annihilated by $\fl_k$ for  sufficient large $k$. Now from Lemma \ref{weightupper}, for any $i=0,1,\ldots,n-1$, the weight set of $W$ module $\oplus_{j\in \bZ}M_{\lambda+\frac{i}n+j}$ is upper bounded. So is $M$, which implies that $M$ is a highest weight module.\end{proof}

\begin{lemm}\label{Z=0}Let $M$ be a  simple cuspidal $\hL$-module. Then $Z\cdot M=0$.\end{lemm}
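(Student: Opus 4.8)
The plan is to show that every homogeneous element of $Z$ acts as $0$ on $M$, treating the odd and even central elements separately. Note first that all of $Z$ has $\fl_0$-weight $0$, so each central element preserves every weight space.

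I would dispose of the odd central elements by a submodule argument. If $z\in Z_{\bar 1}$, then $[z,z]=0$ in $\hL$, so the operator $z$ on $M$ satisfies $2z^2=0$, i.e. $z^2=0$. Since $z$ commutes with all of $\hL$, both $\Im z$ and $\Ker z$ are $\hL$-submodules, and $z^2=0$ forces $\Im z\subseteq\Ker z$. As $M$ is simple, either $z=0$, or $\Im z=M$ and $\Ker z=0$, the latter giving the contradiction $M=\Im z\subseteq\Ker z=0$. Hence $z=0$; this kills every $z_{-1,i,\bar 1}$, $z_{0,i,\bar 1}$, $z_{1,i,\bar 1}$ at once.

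For the even central elements, simplicity of $M$ together with $\dim M_\mu<\infty$ forces each to act as a scalar; write $c,c_{-1,i},c_{0,i},c_{1,i}$ for the scalars of $z,z_{-1,i,\bar 0},z_{0,i,\bar 0},z_{1,i,\bar 0}$. I would kill the Virasoro charge first: $\Vir=\mathrm{span}\{\fl_i\}\oplus\bC z$ is a subalgebra of $\hL$, and the restriction of $M$ is a cuspidal $\Vir$-module, all of which have central charge $0$ (classical; it also follows from Lemma \ref{Omegaoper} and the fact that simple cuspidal $\Vir$-modules are subquotients of the intermediate series). Thus $c=0$. To kill the remaining scalars I would use trace identities on weight spaces. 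Fix $\mu\in\supp(M)$; since $M$ is uniformly bounded the multiplicities $\dim M_\nu$ are eventually constant, equal to some $D>0$ on each coset, and products of two weight-preserving $\fg$-operators have traces bounded in $\nu$. For even $x,y\in\dg$, apply the ordinary trace over $M_\nu$ to the relation
$$[xt^a,yt^{-a}]=[x,y]t^0+\tfrac{1-4a^2}{24}\rho_i([x,y])z_{-1,i}+\big(aB_i(x,y)+B_i(\partial,[x,y])\big)z_{0,i}+\dot\alpha_i(x,y)z_{1,i}$$
(implicit summation over the central basis as in (\ref{hL})). Cyclicity gives $\mathrm{tr}_{M_\nu}([xt^a,yt^{-a}])=T(\nu-a)-T(\nu)$ with $T(\nu)=\mathrm{tr}_{M_\nu}(yt^{-a}\,xt^a)$ bounded, while the right side is $\mathrm{tr}_{M_\nu}([x,y]t^0)$ plus the central contributions times $\dim M_\nu$. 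Telescoping over a long window $\nu=\mu,\mu+a,\dots$ and using the eventual constancy of $\dim M_\nu$ and of $\mathrm{tr}_{M_\nu}([x,y]t^0)$ forces the degree-$2$ polynomial in $a$ on the central side to accumulate to a bounded quantity; comparing the coefficients of $a^2$, $a^1$, $a^0$ yields $\rho_i([x,y])c_{-1,i}=0$, $B_i(x,y)c_{0,i}=0$, and $\dot\alpha_i(x,y)c_{1,i}=0$. Repeating with odd–odd pairs $x,y\in\dg_{\bar 1}$ (where $[xt^a,yt^{-a}]$ is again even, now an anticommutator) supplies the remaining values of the even cocycles, and since $\{\rho_i\},\{B_i\},\{\dot\alpha_i\}$ are independent families dual to the chosen bases, I conclude $c_{-1,i}=c_{0,i}=c_{1,i}=0$; the relation $[\fl_k,xt^{-k}]$, whose central coefficients grow like $k^3$ and $k^2$, gives an independent confirmation for $c_{-1,i},c_{0,i}$.

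The main obstacle is precisely the control of the boundary and diagonal trace terms in the telescoping: one must know that products of weight-preserving $\fg$-operators have $\nu$-bounded traces and that $\dim M_\nu$ stabilizes, both of which are consequences of uniform boundedness, so cuspidality is indispensable. For the Virasoro charge this boundedness scheme does not apply directly — the operators $\fl_{\pm k}$ are not uniformly bounded and the analogue of $T(\nu)$ grows linearly in $\nu$ — so there one instead compares the cubic growth in $k$ of the central coefficient against the strictly slower growth permitted by uniform boundedness, which is the delicate point behind the classical vanishing of the central charge and is the reason I prefer to handle $z$ by the restriction to $\Vir$.
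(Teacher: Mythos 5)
Your handling of $Z_{\bar 1}$ (via ${z'}^2=\frac 12[z',z']=0$ and simplicity) and of the Virasoro central element $z$ (restricting to $\Vir$ and using that cuspidal $\Vir$-modules have zero central charge) coincides with the paper's proof. The divergence, and the trouble, lies in your trace-telescoping scheme for the remaining scalars $c_{-1,i},c_{0,i},c_{1,i}$. First gap: your claim that $T(\nu)=\mathrm{tr}_{M_\nu}(yt^{-a}\,xt^a)$ is bounded in $\nu$ ``as a consequence of uniform boundedness'' is unfounded. Cuspidality bounds $\dim M_\nu$, not the matrix entries of the operators $xt^a$ between weight spaces, and a priori (before any classification) nothing prevents these entries from growing with $\nu$ --- exactly the phenomenon you yourself point out for the $\fl_k$ on intermediate series, which is why your scheme already fails for $z$. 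In the paper this polynomial control of the action is precisely what the Billig--Futorny differentiators $\Omega_{k,s}^{(m)}$ of Lemma \ref{Omegaoper} supply, and your proposal never invokes them; your ``independent confirmation'' via $[\fl_k,xt^{-k}]$ collapses for the same reason, since there the boundary terms involve $\fl_k$.

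Second, and structurally fatal: the trace identity cannot see the $z_{1,i}$ at all. Since $xt^0$ and $yt^0$ both preserve the finite-dimensional space $M_\nu$, we have $\mathrm{tr}_{M_\nu}([xt^0,yt^0])=0$, and the bracket (\ref{hL}) at $a=0$ then forces
\[
\mathrm{tr}_{M_\nu}\bigl([x,y]t^0\bigr)=-\Bigl(\tfrac{1}{24}\sum_i\rho_i([x,y])c_{-1,i}+\sum_iB_i(\partial,[x,y])c_{0,i}+\sum_i\dot\alpha_i(x,y)c_{1,i}\Bigr)\dim M_\nu .
\]
Substituting this back, the trace of your right-hand side for general $a$ reduces to
\[
\Bigl(-\tfrac{a^2}{6}\sum_i\rho_i([x,y])c_{-1,i}+a\sum_iB_i(x,y)c_{0,i}\Bigr)\dim M_\nu ,
\]
so the $a^0$-coefficient cancels identically: comparing constant terms yields $0=0$, never $\dot\alpha_i(x,y)c_{1,i}=0$. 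Any central cocycle that is independent of $a$ is invisible to traces. Compounding this, for odd $x,y$ the bracket acts as an anticommutator, and since $\mathrm{tr}(uv)=\mathrm{tr}(vu)$ the ordinary traces add rather than telescope; the supertrace does telescope, but then the central contribution is weighted by $\mathrm{sdim}\,M_\nu$, which may vanish --- and odd--odd pairs are exactly where the $\dot\alpha_i$ occur in the motivating super examples (e.g.\ the Fermion--Virasoro case). The paper avoids all of this by working with operator identities in $U(\hL)$ rather than traces: carrying the central terms through the computation (\ref{Omega}) makes the scalars appear as coefficients of a polynomial in $k$ multiplying $\fl_{p-1}$, giving (\ref{zero-1}), and then, after (\ref{l-x}) and (\ref{4.8}), as coefficients of the polynomial in $a$ in (\ref{zero-2}); since these identities hold in $\ann_{U(\hL)}(M)$ for infinitely many $k$, respectively $a$, each polynomial coefficient annihilates $M$, killing $z_{-1,i}$, $z_{0,i}$ and, crucially, the $a$-independent $z_{1,i}$ that your method cannot reach. (Your unproved ``eventual constancy'' of $\dim M_\nu$ is a lesser issue by comparison, and the constancy of $\mathrm{tr}_{M_\nu}([x,y]t^0)$ you assume is in fact dictated by the $a=0$ identity displayed above.)
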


\begin{proof} Since $M$ is simple, any $y\in Z_{\bar 0}$ acts as a scalar multiplication on $M$. We know that  $M$ has finite length as a module over $\Vir=W\oplus   \bC z$, hence $M$ has a simple cuspidal $\Vir$ module, which implies $z M=0$. 
	
	For any $z'\in Z_{\bar 1}$, since $z'M$ is a submodule of $M$, we have $z'M=0$ or $z'M=M$. From ${z'}^2M=\frac 12 [z',z']M=0$, we deduce that $z' M=0$. So
\begin{equation}\label{zero-0}z\cdot M=Z_{\bar 1}\cdot M=0.\end{equation}

 Considering $M$ as   a    cuspidal $\Vir$ module, we can find an  $m\in \bN$, such that $\Omega_{k, s}^{(m)}\in \ann_{U(\hL)}(M)$. By the same computations as for  (\ref{Omega}),  for all $k,p\in \bZ$ with $k\ne p-3,p-2,\ldots, p+m$, on $M$ we have

\begin{align}
0\equiv& [\Omega_{k,p-1}^{(m)},x(-k)]-2[\Omega_{k,p}^{(m)},x(-k-1)]+[\Omega_{k,p+1}^{(m)},x(-k-2)]\nonumber\\
&-[\Omega_{k+1,p-1}^{(m)},x(-k-1)]+2[\Omega_{k+1,p}^{(m)},x(-k-2)]-[\Omega_{k+1,p+1}^{(m)},x(-k-3)]\nonumber\\
\equiv &\sum\limits_{i=0}^{m+2}(-1)^i\binom{m+2}{i}(1-d)x(-i)\fl_{p-1+i}\\
&+\big(\sum_{i=1}^{n_{-1}} \frac{k^3-k}{12} \rho_i(x)z_{-1,i}+\sum_{i=1}^{n_0}(k^2-k) B_i(\partial,x)z_{0,i}\big)\fl_{p-1}\nonumber\\ &-\big(\sum_{i=1}^{n_{-1}} \frac{(k+1)^3-(k+1)}{12} \rho_i(x)z_{-1,i}+\sum_{i=1}^{n_0}((k+1)^2-(k+1)) B_i(\partial,x)z_{0,i})\big)\fl_{p-1}\nonumber\\
\equiv &\sum\limits_{i=0}^{m+2}(-1)^i\binom{m+2}{i}(1-d)x(-i)\fl_{p-1+i}\\
&-\big(\sum_{i=1}^{n_{-1}} \frac{k^2+k}{4} \rho_i(x)z_{-1,i}-2\sum_{i=1}^{n_0}k B_i(\partial,x)z_{0,i}\big)\fl_{p-1}\nonumber \mod \ann_{U(\hL)}(M),
\end{align}
 where $z_{i,j}:=z_{i,j,\bar 0}, \rho_i:=\rho_{i,\bar 0}, B_{i}:=B_{i,\bar 0}, n_{i}:=n_{i,\bar 0}$.
Since $\fl_{p-1}M\ne0$  for some $p$, we deduce that  \begin{equation}\label{zero-1}\sum_{i=1}^{n_{-1}} \rho_i(x)z_{-1,i}, \sum_{i=1}^{n_0} B_i(\partial,x)z_{0,i}\in \ann_{U(\hL)}(M),\forall x\in \dg.\end{equation}

Using (\ref{zero-1}) and the same computations as for (\ref{Omega}), we get

\begin{align}
0\equiv& [\Omega_{k,p-1}^{(m)},x(a+1)]-2[\Omega_{k,p}^{(m)},x(a)]+[\Omega_{k,p+1}^{(m)},x(a-1)]\nonumber\\
&-[\Omega_{k+1,p-1}^{(m)},x(a)]+2[\Omega_{k+1,p}^{(m)},x(a-1)]-[\Omega_{k+1,p+1}^{(m)},x(a-2)]\nonumber\\
\equiv &\sum\limits_{i=0}^{m+2}(-1)^i\binom{m+2}{i}(1-d)x(a+k+1-i)\fl_{p-1+i}\,\,\mod \ann_{U(\hL)}(M)\nonumber.
\end{align}

Hence \begin{align}\label{l-x}\sum\limits_{i=0}^{m+2}(-1)^i\binom{m+2}{i}x(a-i)\fl_{p+i}\in \ann_{U(\hL)}(M),\forall x(a)\in \fg,p\in \bZ.\end{align}

Now for any $a,b\in \frac1n\bZ$ and $j\in\bZ$  with $a+b+j\ne 0,1,\cdots, m+2$, using (\ref{zero-1}) we have
$$\begin{aligned}\sum\limits_{i=0}^{m+2}(-1)^i&\binom{m+2}{i}x(a-i)[\fl_{p+i-j},y(b+j)]\\
	=&[\sum\limits_{i=0}^{m+2}(-1)^i\binom{m+2}{i}x(a-i)\fl_{p+i-j},y(b+j)]\\
	&-\sum\limits_{i=0}^{m+2}(-1)^i\binom{m+2}{i}[x,y](a+b-i+j)\fl_{p+i-j}\in \ann_{U(\hL)}(M),\end{aligned}$$
	i.e., for all $j\ne -a-b,-a-b+1,\ldots,-a-b+m+2$,
we see that
\begin{align}&\sum\limits_{i=0}^{m+2}(-1)^i\binom{m+2}{i}x(a-i)[\fl_{p+i-j},y(b+j)]\nonumber \\&=\sum\limits_{i=0}^{m+2}(-1)^i\binom{m+2}{i}x(a-i)(b+j+(p+i-j)d)y(b+p+i)\in  \ann_{U(\hL)}(M)\label{4.7}.\end{align}

Since the right-hand side of (\ref{4.7}) is a polynomial of $j$, we can remove the condition for $j$ in (\ref{4.7}) to yield 

\begin{equation}\label{4.8}\sum\limits_{i=0}^{m+2}(-1)^i\binom{m+2}{i}x(a-i)[\fl_{p+i},y(b)]\in  \ann_{U(\hL)}(M), \forall a, b\in  \frac1n\bZ,  p\in\bZ.\end{equation}
 
Now from (\ref{l-x}) and (\ref{4.8}), for any $x(a),y(-a)\in \dg$ with $a\ne p,p+1,\ldots,p+m+2$,  we have  \begin{align*}0&\equiv \sum_{i=1}^{n_{-1}}\frac{1-4a^2}{24}\rho_i([x,y])z_{-1,i}+\sum_{i=1}^{n_0} a B_i(x,y)z_{0,i}+\sum_{i=1}^{n_1}\dot\alpha_i(x,y)z_{1,i}\\ &\equiv [\sum\limits_{i=0}^{m+2}(-1)^i\binom{m+2}{i}x(a-i)\fl_{p+i},y(-a)]-\sum\limits_{i=0}^{m+2}(-1)^i\binom{m+2}{i}x(a-i)[\fl_{p+i},y(-a)]
\\ &-\sum\limits_{i=0}^{m+2}(-1)^i\binom{m+2}{i}[x,y](-i)\fl_{p+i}\mod \ann_{U(\hL)}(M).\end{align*}

Hence
\begin{equation}\label{zero-2} \sum_{i=1}^{n_0}  B_i(x,y)z_{0,i}, \sum_{i=1}^{n_1}\dot\alpha_i(x,y)z_{1,i}\in \ann_{U(\hL)}(M),\forall x,y\in \dg.\end{equation}

From (\ref{zero-0}), (\ref{zero-1}) and (\ref{zero-2}), we have $Z\cdot M=0$.
\end{proof}

Now Theorem \ref{thm2} follows from Theorem \ref{noncuspidal}, Lemma \ref{Z=0}, Theorem \ref{cuspidal} and Theorem \ref{AL}.

In order to apply Theorem \ref{thm2}, one has to first find   all  simple finite dimensional modules $V$ over the finite dimensional Lie superalgebra $\ddg$. Then construct the loop module $\Gamma(V,\lambda)$ for any $\lambda\in\bC$ following the steps in (3.6)-(3.8), and find its simple subquotient modules. 

\begin{section}{Examples}\end{section}

 In this section, we shall apply our  Theorems \ref{thm1} and \ref{thm2} to some Lie algebras to recover many known results. We shall also have  new Lie (super)algebras and new results in  each of these examples.
 
 \
 
\begin{exam}\label{ex1} Let $\dg=\bC e$ be the  $1$-dimensional trivial  Lie algebra, $d(e)=\beta e, \sigma(e)=\omega_n^ie $ for some $\beta\in \bC$, $n\in \bN$ and $i\in \{0,1,\ldots,n-1\}$ with $\beta \ne 1$ and $\gcd(n,i)=1$. Then the Lie algebra $\fL=W\ltimes et^{\frac in}A$ has brackets
 \begin{equation}[\fl_i, \fl_j]=(j-i)\fl_{i+j}; [\fl_j,et^{\frac in+k}]=(\frac in+k+j\beta)et^{\frac in+k+j},[et^{\frac in}A,et^{\frac in}A]=0.\end{equation}
 By Theorem \ref{thm1}, we compute  
 $$\begin{aligned}\dim H^2(\fL,\bC)_{(-1)}=&\left\{\begin{array}{lr}1,& \text{\rm if }\beta=-1,i=0,\\0,& {\rm otherwise}, \end{array}\right.\\
 	\dim H^2(\fL,\bC)_{(0)}=&\left\{\begin{array}{lr}3,& \text{\rm if }\beta=i=0,\\ 2, &\text{\rm if }\beta=0,\frac in=\frac 12,\\ 1, &{\rm otherwise}, \end{array}\right.\end{aligned}$$ and $\dim H^2(\fL,\bC)_{(1)}=\dim H^2(\dg,\bC)^{d,\sigma}=0.$ 
 	Hence $$\dim H^2(\fL,\bC)=\left\{\begin{array}{lr}2,&\text{\rm if } \beta=-1,i=0, \\3,&\text{\rm if }\beta=i=0,\\ 2,&\text{\rm if }\beta=0,\frac in=\frac 12,\\1,& {\rm otherwise}. \end{array}\right.$$

A finite dimensional simple module  over $\ddg$ is 1-dimensional. From Theorem \ref{thm2}, we know that any simple quasi-finite $\hL$-module is a highest (or lowest) weight module or a module of intermediate series (i.e. a weight module with all 1-dimensional weight spaces). Let us determine the $\hL$-modules of intermediate series.

{\bf Case 1: $\beta\ne0$.} Simple finite dimensional $\ddg$-modules are $V(\mu)=\bC v$ for some $\mu\in\bC$ with $\partial v=\mu v, \dg v=0.$ Simple cuspidal $\hL$-modules $X$ are simply simple cuspidal Vir-modules (with $\mathfrak{g} X=0$ where $\mathfrak{g}$ is defined in Sect.1).

{\bf Case 2: $\beta=0$.} 
A finite dimensional simple module  over $\ddg$ is of the form  $V(\mu_1, \mu_2)=\bC v$ for some $\mu_i\in\bC$ with $\partial v=\mu_1 v,  e v= \mu_2 v\ne0.$ Simple cuspidal $\hL$-modules are simple subquotients of $V(\mu_1, \mu_2; \lambda)=\bC[t^{\pm \frac1n}]$ for some $\lambda,\mu_i\in\bC$  with  
$$\aligned &Z\cdot V(\mu_1, \mu_2; \lambda) =0, \\ &e(\frac {i}n+k_1)\cdot t^{\frac{k_2}{n}}=\mu_2t^{\frac {i}n+k_1+\frac{k_2}{n}},\\
&\fl_{k_1}\cdot t^{\frac{k_2}{n}}=(\lambda+\frac{k_2}{n}+k_1\mu_1)t^{ {k_1}+\frac{k_2}{n}}, \forall k_1,k_2\in\bZ.
\endaligned$$
The loop $\hL$-module $V(\mu_1, \mu_2; \lambda)$ is simple since $\mu_2\ne0$.

The universal central extensions and simple quasi-finite representations for $\fL$ with $\frac in= 1$ were given in \cite{GJP,L}. The Lie algebras
$\hL$ is  the twisted Heisenberg-Virasoro algebra if $\beta=i=0$,  the mirror Heisenberg-Virasoro algebra if $\beta=0,\frac in=\frac 12$, and $W(2,2)$ if $\beta=-1,i=0$. All other Lie algebras $\hL$ in this example were not known in the literature. In particular, all Lie algebras $\hL$ for $\frac in\ne 1$ or $\frac12$ are new.
\end{exam}

\begin{exam} Let $p>1$, let $\dg=\bC x_1+\cdots+\bC x_{p-1}$ be the commutative Lie algebra of dimension $p-1$, $d=0$, $\sigma(x_i)=\omega_p^i x_i$ for $ i=1,2,\ldots,p-1$. Then $\sigma$ has order $p$ and $\dg=\oplus_{i=1}^{p-1}{\dg}_{[i]}$ with $\dg_{[i]}=\bC x_i$. The Lie algebra $\hL(\dg,d,\sigma)$ is called gap-p Virasoro algebra (in a slightly different form) which was studied in \cite{X}. Note that $\ddg$ is commutative in this case. By Theorem \ref{thm1}, we know that $$\dim H^2(\fL,\bC)=\dim (\Inv(\ddg))^{\sigma}=k+1\text{ for }p=2k {\text{ or }}2k+1.$$ 
	Actually the matrix of skew-symmetric bilinear form in $(\Inv(\dg))^{\sigma}$ is of the form
	$$\sum_{i=1}^{\lfloor{\frac{p-1}2}\rfloor} a_i(E_{i, n-i+1}+E_{ n-i+1,i}), \text{ where }a_i\in\bC.$$
	
	Since any finite dimensional simple module  over $\ddg$ is 1-dimensional, from Theorem \ref{thm2}, we know that any simple quasi-finite $\hL$-module is a highest (or lowest) weight module or a module of intermediate series (i.e. a weight module with all 1-dimensional weight spaces).
	
	Let us determine the $\hL$-modules of intermediate series. Simple finite dimensional  $\ddg$-modules are of the form $V(\mu_1, \mu_2,\cdots, \mu_{p})=\bC v$ for some $\mu_i\in\bC$ with $\partial v=\mu_p v, e_i v=\mu_i v.$ Simple cuspidal $\hL$-modules are simple subquotients of $V(\mu_1, \mu_2,\cdots, \mu_{p}; \lambda)=\bC[t^{\pm \frac1p}]$ for some $\lambda,\mu_i\in\bC$  with  
	$$\aligned & Z\cdot V(\mu_1, \mu_2,\cdots, \mu_{p}; \lambda) =0, \\ &x_j(\frac {j}p+k_1)\cdot t^{\frac{k_2}{p}}=\mu_jt^{\frac {j}n+k_1+\frac{k_2}{p}},\\
	&\fl_{k_1}\cdot t^{\frac{k_2}{p}}=(\lambda+\frac{k_2}{p}+k_1\mu_p )t^{ {k_1}+\frac{k_2}{p}}, \forall k_1,k_2\in\bZ.
	\endaligned$$
	It is not hard to determine the necessary and sufficient conditions for the loop $\hL$-module $V(\mu_1, \mu_2,\cdots, \mu_{p}; \lambda)$ to be simple.
	
	One may easily notice that the above results on $\hL$-modules of  intermediate series are quite different from that in \cite{X}.
\end{exam}

\begin{exam}Let $\dg=\bC^{0|1}$ be the Lie superalgebra of dimension 1,  $d(1)=\beta\ne 1, \sigma(1)=\omega_n^i $ for some $\beta\in \bC$, $n\in \bN$ and $i\in \{0,1,\ldots,n-1\}$ with $\gcd(n,i)=1$. the Lie algebra $\fL$ has the same basis and brackets as in Example 1 but different parities, in particular, $ t^{\frac in}A$ has odd parity and $[ t^{\frac in}A, t^{\frac in}A]=0$. By Theorem \ref{thm1},  we compute 
	$$\begin{aligned}\dim H^2(\fL,\bC^{1|1})_{\bar 0,(-1)}=&\left\{\begin{array}{lr}1,&\text{\rm if } \beta=-1,i=0,\\0,& {\rm otherwise}, \end{array}\right.\\
	\dim H^2(\fL,\bC^{1|1})_{\bar 0,(0)}=&\left\{\begin{array}{lr}2,& \text{\rm if }\beta=i=0,\\ 1, &{\rm otherwise}, \end{array}\right. \\
	\dim H^2(\fL,\bC^{1|1})_{\bar 0,(1)}=&\dim H^2(\dg,\bC^{1|1})_{\bar 0}^{d,\sigma}=\left\{\begin{array}{lr}1,&\text{\rm if }\beta=\frac 12,\frac in\in \{0,\frac 12\},\\ 0,& {\rm otherwise.}\end{array}\right.\end{aligned}$$ 
	 Hence $$\dim H^2(\fL,\bC^{1|1})_{\bar 0}=\left\{\begin{array}{lr}2,& i=0, \beta\in\{0,-1\}, \\ 2,&\beta=\frac 12,\frac in\in \{0,\frac 12\},\\1,& {\rm otherwise}. \end{array}\right.$$

Since any finite dimensional simple module  over $\ddg$ is trivial 1-dimensional, from Theorem \ref{thm2}, we know that any simple quasi-finite $\hL$-module is a highest (or lowest) weight module or a module of intermediate series with trivial action of the odd part (i.e. a weight module with all 1-dimensional weight spaces over $W$).

If $\beta=\frac 12$ and $ \frac in\in \{0,\frac 12\}$, the Lie algebra $\hL=\hL(\dg,d,\sigma)$ is called Fermion-Virasoro algebras defined and studied in \cite{DCL,XZ}. Besides the Fermion-Virasoro algebras, all other Lie superalgebras  in this example were not seen in the literature.
 
\end{exam}

\begin{exam} Let $\dg=\bC h+\bC e$ be a $2$-dimensional Lie superalgebra with $h\in \dg_{\bar 0}, e\in \dg_{\bar 1}$, and $h=[e,e], [h,e]=0$. For any given $d$ and $\sigma$, there exists 
$i,n\in \bZ_+, \beta\in \bC$, such that $d(h)=2\beta h,d(e)=\beta e, \sigma(e)=\omega_n^{i} e, \sigma(h)=\omega_n^{2i} h$ with $\beta\ne 1$ or $\frac 12$,  $\gcd(n,i)=1, i\in \{0,1,\ldots,n-1\}$. Then $\fL=W\ltimes \big((e\otimes t^{\frac in}A)\oplus (h\otimes t^{\frac {2i}n}A)\big)$. From Theorem \ref{thm1}, we compute that $\dim H^2(\fL,\bC^{1|1})_{\bar 0,(1)}=0$, and 
	$$\begin{aligned}
\dim H^2(\fL,\bC^{1|1})_{\bar 0,(-1)}=\left\{\begin{array}{lr}1,& \text{\rm if }i=0,\beta=-1,\\1,& \text{\rm if }\beta=-\frac 12,\frac in\in\{0,\frac 12\},\\ 0, &{\rm otherwise}, \end{array}\right. \\
\dim H^2(\fL,\bC^{1|1})_{\bar 0,(0)}=\left\{\begin{array}{lr}3,& \text{\rm if }i=\beta=0,\\ 2,& \text{\rm if }\frac in\in\{\frac 13,\frac 23\},\beta=0,\\ 1,  &{\rm otherwise}. \end{array}\right.\end{aligned}$$ 
Hence 
\begin{equation*} \dim H^2(\fL,\bC^{1|1})_{\bar 0}=\left\{\begin{array}{lr}2,&\text{\rm if } i=0,\beta=-1,\\2,&\text{\rm if }\beta=-\frac 12,\frac in\in\{0,\frac 12\},\\3,& \text{\rm if }i=\beta=0,\\ 2,& \text{\rm if }\frac in\in\{\frac 13,\frac 23\},\beta=0,\\ 1,  &{\rm otherwise}.\end{array}\right.\end{equation*}

Note that any finite dimensional simple module over $\ddg$ is 1-dimensional with $\dg$ acting as zero if  $\beta\ne 0$, and $1$-dimensional or $2$-dimensional if $\beta=0$.
  From Theorem 2, any simple quasi-finite $\hL$-module is a highest(lowest) weight $\hL$-module or a cuspidal module with weight multiplicity $\le 2$. 
  Let us determine the cuspidal $\hL$-modules $\tilde V$. 
  
  	{\bf Case 1: $\mathfrak{g} \tilde V=0$.} The simple $\hL$-modules $\tilde V$  are simply simple cuspidal Vir-modules.
  
  {\bf Case 2: $\mathfrak{g} \tilde V\ne0$.} We deduce that $\beta=0$ and $ \tilde V= \tilde V(\mu_1,\mu_2; \lambda)= \tilde V_{\bar 0}\oplus  \tilde V_{\bar 1}$ where $\tilde V_{\bar 0}=  v_0\bC[t^{\pm \frac1n}],    \tilde V_{\bar 1}=  v_1\bC[t^{\pm \frac1n}]$  for some $\lambda,\mu_i\in\bC$ with $\mu_2\ne0$, subject to the actions:
   $$\aligned &Z\cdot \tilde V(\mu_1,\mu_2; \lambda) =0, \\
   & e(\frac {i}n+k_1)\cdot v_{\bar 0}t^{\frac{k_2}n}= v_{\bar 1}t^{\frac {i}n+k_1+\frac{k_2}n}, \\
   &e(\frac {i}n+k_1)\cdot v_{\bar 1}t^{\frac{k_2}n}= \frac{\mu_2}2v_{\bar 0}t^{\frac {i}n+k_1+\frac{k_2}n},\\ &h(\frac {2i}n+k_1)\cdot v_{\bar k}t^{\frac{k_2}{n}}=\mu_2v_{\bar k}t^{\frac {2i}n+k_1+\frac{k_2}{n}},\\
  &\fl_{k_1}\cdot v_{\bar k}t^{\frac{k_2}{n}}=(\lambda+\frac{k_2}{n}+k_1\mu_1)v_{\bar k}t^{ {k_1}+\frac{k_2}{n}}, \forall k, k_1, k_2\in\bZ.
  \endaligned$$
  The loop $\hL$-module $\tilde V(\mu_1, \mu_2; \lambda)$ is simple since $\mu_2\ne0$.

  We remark that for $\beta=-\frac 12,\frac in=\frac 12$, $\hL$ is exactly the $N=1$ BMS superalgebra defined in \cite{BBM}, and representation theory for BMS superalgebra has been extensively studied, see \cite{LPXZ2} and references therein. All other Lie superalgebras in this example were not seen in the literature.
\end{exam}

\begin{exam} Let $\dg$ is a finite dimensional simple Lie superalgebra, $d=0$, and $\sigma$ be an order $n$ automorphism of $\dg$. For any $B\in \Inv(\ddg)$, $B(\partial, \dg)=B(\partial,[\dg,\dg])=B([\partial,\dg],\dg)=0$, Hence $\dim \Inv(\ddg)=1+\dim \Inv(\dg)$. Since $\dim \Inv(\dg)\le 1$,  from Theorem \ref{thm1}, we have $1\le \dim H^2(\fL,\bC^{1|1})_{\bar 0}\le 2$. Actually $H^2(\fL,\bC^{1|1})_{\bar 0}= 2$ if $\dg$ is a finite dimensional simple Lie algebra, giving twisted affine-Virasoro algebras.

	From Theorem \ref{thm2}, any quasi-finite simple $\hL$-module that is not a highest (or lowest) weight module is a simple subquoitent of a loop module. Since twisted Affine Kac-Moody superalgebra can be realized as a fixed point subalgebra of a nontwisted Affine Kac-Moody superalgebra (it means $n=1$), it generalizes the results for nontwisted Affine-Virasoro algebras  in \cite{LPX} to twisted Affine-Virasoro superalgebras (it means $n>1$). 
	
	Let us explain the loop modules $\Gamma(V, \lambda)$. Let $V$ be a finite-dimensional simple module over $\ddg$ which is simply a simple $\dg$-module with $\partial V=0$. If $\dg V=0$, then $\Gamma(V, \lambda)$ is simply a Vir-module of intermediate series which is clear. Now we consider the case that $\dg V\ne 0$.
	Then $\Gamma(V, \lambda)=V\otimes \bC[t^{\pm \frac1n}]$ with 
		$$\aligned  &x(\frac {i}n+k_1)\cdot vt^{\frac{k_2}{n}}=(xv)t^{\frac {i}n+k_1+\frac{k_2}{n}},\\
	&\fl_{k_1}\cdot vt^{\frac{k_2}{n}}=(\lambda+\frac{k_2}{n})vt^{ {k_1}+\frac{k_2}{n}},
	\endaligned$$
	for all $k_1, k_2\in\bZ, x\in\dg_{[i]}$. It is easy to see that $\Gamma(V, \lambda)$ is a simple $\hL$-module, and it is not straightforward to determine the simple subquotients of $\Gamma(V, \lambda)$ for $n>1$.
	\end{exam}

\begin{exam}\label{5.6} Let $\dg=\so(m)\ltimes \bC^m$ , $d|_{\so(m)}=0, d|_{\bC^m}=-{\rm id},  \sigma=1$,  where $\so(m)$ is the orthogonal Lie algebra consists of all $m\times m$ skew-symmetric matrices over $\bC$, and $\bC^m$ is the $\so(m)$ module with actions of matrix multiplication. Then $\fL=W\ltimes (\so(m) \ltimes \bC^m)\otimes A$ with brackets
\begin{align*}&[\fl_l,\fl_j]=(j-l)\fl_{l+j}, &[\fl_l, J(j)]=jJ(l+j),\\
 &[\fl_l, \eta(j)]=(j-l)\eta(j+l),& [x(l),y(j)]=[x,y](l+j),\end{align*}
 for all $x,y\in \so(m)\ltimes \bC^m ; j,l\in \bZ, J\in \so(m),\eta\in \bC^m$. The Lie algebras $\fL$ were defined and studied in \cite{BG} as an infinite dimensional extension of the conformal Galilei algebra.

Note that $\so(1)=0,\dim \so(2)=1$ and $\so(3)\cong \sl_2$, $\so(4)\cong \so(3)\oplus \so(3)$, $\so(m)$ are simple for $m\ge 5$, and $[\so(m),\bC^m]=\bC^m$ for   $m\ge 2$.

For $m=1$, the Lie algebra $\fL=W(2,2)$ is studied in Example \ref{ex1}.

For $m\ge 2$, from Theorem \ref{thm1}, we know that $ H^2(\fL,\bC)_{(-1)}= H^2(\fL,\bC)_{(1)}=0$, and

$$\dim H^2(\fL,\bC)=\dim H^2(\fL,\bC)_{(0)}=\left\{\begin{array}{lr}3,& \text{\rm if }m=2, \\ 3,& \text{\rm if }m=4,\\2,& {\rm otherwise}. \end{array}\right.$$

From Theorem \ref{thm2}, for $m\ge 2$, it is easy to see that for any simple quasi-finite module $M$, $(\bC^m\otimes A) \cdot M=0$, and $M$ is a quasi-finite simple module over $\hL(\so(m),0,1)$ which is the case for $n=1$ in Example 5. 

 For $m=2$, $\hL$ is called the  planar Galilean conformal algebra in \cite{Ai}, and it's structure and representation were extensively studied, see \cite{GLP, GG}and references therein. All other results in this example were not seen in the literature.\end{exam}

\begin{exam} In the previous examples, $d$ is always diagonalizable. Now we give an example in which $d$ is not diagonalizable. Let $\dg=\bC e_1+\bC e_2$ be a $2$-dimensional abelian Lie  algebra. Take  $\sigma$ as the scalar map by $\omega_n^{i}$ where  $i\in \{0,1,\ldots,n-1\}$ with $\gcd(n,i)=1$. Take the derivation $d$ as  $d(e_1)=\beta e_1,d(e_2)=\beta e_2+e_1$ where $\beta\in\mathbb{C}$ with $\beta\ne1$. Then $\fL=W\ltimes \big((e_1\otimes t^{\frac in}A)\oplus (e_2\otimes t^{\frac {i}n}A)\big)$ and 
	\begin{align*}
		&[\fl_i,e_1\otimes t^a]=(a+i\beta)e_1\otimes t^{i+a},\\
		&[\fl_i,e_2\otimes t^a]=\big((a+i\beta)e_2+ie_1\big)\otimes t^{i+a},
	\end{align*}
	 From Theorem \ref{thm1}, we compute that  
	$$\begin{aligned}
		\dim H^2(\fL,\bC^{1|1})_{\bar 0,(-1)}=\left\{\begin{array}{lr}1,& \text{\rm if }\beta=-1, n=1,\\ 0, &{\rm otherwise}, \end{array}\right. \\
		\dim H^2(\fL,\bC^{1|1})_{\bar 0,(0)}=\left\{\begin{array}{lr}3,& \text{\rm if }\beta=0, n=1,\\
			2,& \text{\rm if }\beta=0,\frac in=\frac 12,\\ 1, &{\rm otherwise}, \end{array}\right.\\
		\dim H^2(\fL,\bC^{1|1})_{\bar 0,(1)}=\left\{\begin{array}{lr} 
			1,& \text{\rm if }\beta= \frac 12,\frac in\in\{1,\frac 12\}\\ 0, &{\rm otherwise}, \end{array}\right.\end{aligned}$$ 
	Hence 
	\begin{equation*} \dim H^2(\fL,\bC^{1|1})_{\bar 0}=\left\{\begin{array}{lr} 3,& \text{\rm if }\beta=0, n=1,
						\\ 2,& \text{\rm if } n=1,\beta\in\{1, \frac12\},
				 				\\ 2,& \text{\rm if }\frac in=\frac 12,\beta\in\{0, \frac 12 \},
			\\ 1,  &{\rm otherwise}.\end{array}\right.\end{equation*}
	
	Note that any finite dimensional simple module over $\ddg$ is 1-dimensional.
	From Theorem 2, any simple quasi-finite $\hL$-module is a highest(lowest) weight $\hL$-module or a cuspidal module with weight multiplicity $1$. Let us determine the $\hL$-modules of intermediate series. 
	
	{\bf Case 1: $\beta\ne0$.} Simple finite dimensional $\ddg$-modules are $V(\mu)=\bC v$ for some $\mu\in\bC$ with $\partial v=\mu v, \dg v=0.$ Simple cuspidal $\hL$-modules $X$ are simply simple cuspidal Vir-modules with $\mathfrak{g} X=0$.
	
		{\bf Case 2: $\beta=0$.} Simple finite dimensional $\ddg$-modules are $V(\mu_1, \mu_2)=\bC v$ for some $\mu_i\in\bC$ with $\partial v=\mu_1 v, e_1 v=0, e_2 v=\mu_2 v.$ Simple cuspidal $\hL$-modules are simple subquotients of $V(\mu_1, \mu_2, \lambda)=\bC[t^{\pm \frac1n}]$ for some $\lambda,\mu_i\in\bC$  with  
		$$\aligned &e_1(\frac {i}n+k_1)\cdot t^{\frac{k_2}n}=0,\,\,\, Z\cdot V(\mu_1, \mu_2, \lambda) =0, \\ &e_2(\frac {i}n+k_1)\cdot t^{\frac{k_2}{n}}=\mu_2t^{\frac {i}n+k_1+\frac{k_2}{n}},\\
		&\fl_{k_1}\cdot t^{\frac{k_2}{n}}=(\lambda+\frac{k_2}{n}+k_1\mu_1)t^{ {k_1}+\frac{k_2}{n}}, \forall k_1, k_2\in\bZ.
		\endaligned$$
	The loop $\hL$-module $V(\mu_1, \mu_2, \lambda)$ is simple if and only if $\mu_2\ne0$.

	 We remark that the Lie algebras  $\hL$ in this example were not known in the literature.
\end{exam}

Theorem \ref{thm1} provides an applicable method to compute the universal central extension of the Lie algebra $\fL(\dg, d, \sigma)$. From the computations in the above examples, in general, it is straightforward to compute the space 
$\Big(\dg/\big((d+1)\dg+[(d+\frac 12)\dg,\dg]+ [\dg,[\dg,\dg]]\big)\Big)^{\sigma}$, $H^2(\dg,\bC^{1|1})_{\bar 0}^{d,\sigma}$ and $ {(\Inv(\ddot{\fg}))}^\sigma$, but involving a lot of computations for many $\dg, d$ and $\sigma$.

\

\noindent {\bf Acknowledgement}. R. L\"u is partially supported by National Natural Science Foundation of China (Grant No.
12271383), K. Zhao is  partially supported
by   NSERC
(311907-2020).

%\noindent {\bf Conflict of Interest}.
%The authors declare  no conflict of interest that is related to the present paper.

%\section*{References}

%\noindent {\bf Data availability}. The present paper contains all relevant data.

\

Rencai L\"u,  Department of Mathematics, Soochow University, Suzhou 215006, P. R. China. Email: rlu@suda.edu.cn

\

Xizhou You, Department of Mathematics, Soochow University, Suzhou 215006, P. R. China.  xzyou@alu.suda.edu.cn

\

Kaiming Zhao,  School of Mathematics and Statistics,
Xinyang Normal University,
Xinyang 464000, P. R. China, and Department of Mathematics, Wilfrid Laurier University, Waterloo, ON, Canada N2L3C5. Email address: kzhao@wlu.ca

%\bibliography{mybibfile}

\begin{thebibliography}{00}


\bibitem{Ai} N. Aizawa, {\it Some properties of planar Galilean conformal algebras}. Lie Theory and Its Applications in
Physics 36 (2013), 301-309.

\bibitem{A} E. Arbarello, C. De Concini, V. G. Kac, C. Procesi, {\it Moduli spaces of curves and representation theory}. Comm. Math. Phys. 117(1988), 1-36.

\bibitem{BG} A. Bagchi, R. Gopakumar, {\it Galilean conformal algebras and AdS/CFT}, JHEP 07 (2009), 037. 

\bibitem{B} Y. Billig, {\it Jet modules}. Canad. J. Math. 59(2007), no. 4, 712-729.



\bibitem{BF} Y. Billig, V. Futorny,  {\it Classification of irreducible representations of Lie algebra of vector fields on a torus}.
J. Reine Angew. Math. 720(2016), 199-216.


 \bibitem{BBM} H. Bondi, M.G.J. van der Burg, A.W.K. Metzner, {\it Gravitational waves in general relativity. VII. Waves from axi-symmetric isolated systems}. Proc. Roy. Soc. Lond. A 269 (1962), 21.
 
\bibitem{CLW} Y. Cai, R. L\"u, Y. Wang, {\it Classification of simple Harish-Chandra modules for map (super)algebras related to the Virasoro algebra}. J. Algebra 570 (2021), 397-415.

\bibitem{CP} V. Chari, A. Pressley, {\it Integrable representations of twisted affine Lie algebras}. J. Algebra 113 (1988), no. 2, 438-464.

\bibitem{DCL} M. Dilxat, L. Chen, D. Liu, {\it Classification of simple Harish-Chandra modules over the Ovsienko-Roger superalgebra}. Proceedings of the Royal Society of Edinburgh Section A: Mathematics,  154 (2024), no. 2, 483–493, doi:10.1017/prm.2023.20.

\bibitem{EK} A. Elduque, M. Kochetov, {\it Graded modules over classical simple Lie algebras with a grading}. Israel J. Math. 207(2015), no.1, 229-280.


\bibitem{Rao}  S. Eswara Rao, {\it On representations of toroidal Lie algebras}, Functional analysis VIII, Various Publ. Ser. (Aarhus), Vol. 47, Aarhus Univ., Aarhus (2004), 146-167.

\bibitem{Rao1} S. Eswara Rao, {\it Partial classification of modules for Lie algebra of diffeomorphisms of $d$-dimensional torus}, J. Math. Phys. 45(2004), no. 8, 3322-3333.

\bibitem{Rao2} S. Eswara Rao, {\it Modules for loop affine-Virasoro algebra}, J. Algebra Appl. (2021), 2150055.

\bibitem{GG} D. Gao, Y. Gao, {\it Representations of the planar Galilean conformal algebra}. Comm. Math. Phys.391(2022), no.1, 199-221.

\bibitem{GHL}Y. Gao, N. Hu, D. Liu, {\it Representations of the affine-Virasoro algebra of type $A_1$}, J. Geom. Phys. 106(2016), 102-107.

\bibitem{GJP} S. Gao, C. Jiang, Y. Pei, {\it Low-dimensional cohomology groups of the Lie algebras $W(a,b)$}. Comm. Algebra 39(2011), no. 2, 397-423.

\bibitem{GLP} S. Gao, D. Liu, Y. Pei, {\it Structure of the planar Galilean conformal algebra}. Rep. Math. Phys.78(2016), no.1, 107-122.

\bibitem{GLZ2} X. Guo, R. L\"{u}, K. Zhao, {\it Simple Harish-Chandra modules, intermediate series modules, and Verma modules over the loop-Virasoro algebra}, Forum Math. 23(2011), no. 5, 1029-1052.

\bibitem{GLZ1} X. Guo, R. L\"{u}, K. Zhao, {\it Classification of irreducible Harish-Chandra modules over generalized Virasoro algebras}, Proc. Edinb. Math. Soc. 55(2012), no. 3, 697-709.


\bibitem{HLW}Y. He, D. Liu, Y. Wang, {\it Simple Harish-Chandra modules over the super affine-Virasoro algebras}, Journal of Algebra
  567(2021),  114-127.

\bibitem{H} J. E. Humphreys, {\it Introduction to Lie algebras and Representation theory}. Second printing, revised. Graduate Texts in Mathematics, 9. Springer-Verlag, New York-Berlin, (1978).

\bibitem{IK}K. Iohara, Y. Koga, {\it Second homology of Lie superalgebras}. Math. Nachr.278(2005), no.9, 1041-1053.

\bibitem{L1}
H. Li,
Local systems of twisted vertex operators, vertex operator superalgebras and twisted modules. {\it Moonshine, the Monster, and related topics (South Hadley, MA, 1994)}, 203-236, Contemp. Math., 193, {\it Amer. Math. Soc., Providence, RI}, 1996.
%\bibitem{KL1988} V. Kac, J. van de Leur, On classification of superconformal algerbas, Strings 88, Sinapore: World Scientific, 1988.

\bibitem{L2}
H. Li, Local systems of vertex operators, vertex superalgebras and modules, {\it J. Pure Appl. Algebra},
{\bf 109} (1996), 143-195.

\bibitem{L} D. Liu, {\it Classification of Harish-Chandra modules over some Lie algebras related to the Virasoro algebra}. J. Algebra447(2016), 548-559.

 

\bibitem{LPX} D. Liu, Y. Pei, L. Xia, {\it Classification of quasi-finite irreducible modules over affine Virasoro algebras}. J. Lie Theory 31(2021), no.2, 575-582.

\bibitem{LPXZ} D. Liu, Y. Pei, L.  Xia, K. Zhao,  {\it Irreducible modules over the mirror Heisenberg-Virasoro algebra}. Commun. Contemp. Math.24(2022), no.4, Paper No. 2150026, 23 pp.

 \bibitem{LPXZ2} D. Liu, Y. Pei, L.  Xia, K. Zhao,  {\it Smooth modules over the N=1 Bondi-Metzner-Sachs superalgebra}, to appear in Commun. Contemp. Math., Vol. 27, No. 04, 2450021 (2025), arXiv:2307.14608.


\bibitem{LGZ} D. Liu, S. Gao, L. Zhu, {\it Classification of irreducible weight modules over $W$-algebra $W(2,2)$}, J. Math. Phys. 49(2008), no. 11, 113503.
%\bibitem{LMZ} R. L\"{u}, V. Mazorchuk, K. Zhao, {\it On simple modules over conformal Galilei algebras}. J. Pure Appl. Algebra 218(2014), no. 10, 1885-1899.


\bibitem{LiuZ} G. Liu, K. Zhao, {\it Classification of irreducible bounded weight modules over the derivation Lie algebras of quantum tori}. Linear Algebra Appl.495(2016), 11-23.


\bibitem{LZ} R. L\"{u}, K. Zhao, {\it Classfication of irreducible weight modules over higher rank Virasoro algebras}. Adv. Math. 206(2006), no. 2, 630-656.

\bibitem{LZ2} R. L\"{u}, K. Zhao, {\it Classification of irreducible weight modules over the twisted Heisenberg-Virasoro algebra}. Commun. Contemp. Math. 12(2010), 183-205.
\bibitem{Ma} O. Mathieu, {\it Classfication of Harish-Chandra modules over the Virasoro Lie algebra}. Invent. Math. 107(1992), no. 2, 225-234.

\bibitem{MZ}V. Mazorchuk,  K. Zhao, Graded simple Lie algebras and graded simple representations,  {\it Manuscripta Mathematica},  156  (2018),  no. 1-2, 215-240. 

\bibitem{M} T. Moons, {\it On the weight spaces of Lie superalgebra modules}. J. Algebra 147(1992), no. 2, 283-323.

\bibitem{Mu} Ian M. Musson, {\it Lie superalgebras and enveloping algebras}, Graduate Studies in Mathematics.
vol. 131, American Mathematical Society, Providence, RI, 2012.

\bibitem{NY} K. Neeb,  H. Yousofzadeh, {\it Universal central extensions of current Lie superalgebras}. J. Pure Appl. Algebra 224 (2020), no. 4, 106205, 10 pp.

\bibitem{N2} J. Nilsson, {\it $U(h)$-free modules and coherent families}.  J. Pure Appl. Algebra 220 (2016), no. 4, 1475-1488.

\bibitem{OR} V. Ovsienko, C. Roger,{\it Extension of Virasoro group and Virasoro algebra by modules of tensor
densities on $S^1$}. Funct. Anal. Appl. 30 (1996), 290-291.


%\bibitem{S} V. Serganova, {\it On representations of Cartan type Lie superalgebras}. Trans. Amer. Math. Soc. 213 (2005), 223�C240.
%\bibitem{S1}  Y. Su, {\it Simple modules over the high rank Virasoro algebras}, Comm. Algebra 29(2001), no. 5, 2067-2080.

\bibitem{S2}  Y. Su, {\it Classfication of Harish-Chandra modules over the higher rank Virasoro algebras}, Comm. Math. Phys. 240(2003), no. 3. 539-551.

\bibitem{X} C. Xu, {\it Classification of irreducible Harish-Chandra modules over gap-p Virasoro algebras}. J. Algebra 542(2020), 1-34.

\bibitem{XZ} Y. Xue, K. Zhao, {\it Representations of the Fermion-Virasoro algebras}, {\it J. Algebra}, Vol.675, 1 August 2025,   133-155, arXiv:2306.15250.

\begin{comment}



\bibitem{C} S. Cheng, {\it Representations of central extensions of differentially simple Lie superalgebras}, Comm. Math. Phys. 154(1993), 555-568.

\bibitem{Rao}  S. Eswara Rao, {\it On representations of toroidal Lie algebras}, Functional analysis VIII, Various Publ. Ser. (Aarhus), Vol. 47, Aarhus Univ., Aarhus (2004), 146-167.

\bibitem{Rao1} S. Eswara Rao, {\it Partial classification of modules for Lie algebra of diffeomorphisms of $d$-dimensional torus}, J. Math. Phys. 45(2004), no. 8, 3322-3333.





\bibitem{GLZ2} X. Guo, R. L\"{u}, K. Zhao, {\it Simple Harish-Chandra modules, intermediate series modules, and Verma modules over the loop-Virasoro algebra}, Forum Math. 23(2011), no. 5, 1029-1052.

\bibitem{GLZ1} X. Guo, R. L\"{u}, K. Zhao, {\it Classification of irreducible Harish-Chandra modules over generalized Virasoro algebras}, Proc. Edinb. Math. Soc. 55(2012), no. 3, 697-709.

\bibitem{H} J. E. Humphreys, Introduction to Lie algebras and Representation theory. Second printing, revised. Graduate Texts in Mathematics, 9. Springer-Verlag, New York-Berlin, (1978).

\bibitem{HWZ} J. Hu, X. Wang, K. Zhao, {\it Verma modules over generalized Virasoro algebras Vir[G]}, J. Pure Appl. Algebra 177(2003), no. 1, 61-69.

\bibitem{Kac} V.G. Kac, {\it Highest weight representations of conformal current algebras},
Topological and geometrical methods in field theory (Espoo, 1986), 3-15, World Sci. Publ., Teaneck, NJ, (1986).

\bibitem{KS} I. Kaplansky, L.J. Santharoubane, {\it Harish-Chandra modules over the Virasoro algebra,
Infinite-Dimensional Groups with Applications (Berkeley, Calif., 1984)}, Math. Sci. Res. Inst. Publ., Springer, New York, (4)(1985), 217-231.

\bibitem{L} H. Li, {\it On certain categories of modules for affine Lie algebras}, Math. Z. 248 (2004), no. 3, 635�C664.

\bibitem{L} D. Liu, {\it Classification of Harish-Chandra modules over some Lie algebras related to the Virasoro algebra}, J. Algebra 447(2016), 548-559.

\bibitem{LGZ} D. Liu, S. Gao, L. Zhu, {\it Classification of irreducible weight modules over $W$-algebra $W(2,2)$}, J. Math. Phys. 49(2008), no. 11, 113503.

\bibitem{LPX} D. Liu, Y. Pei, L. Xia, {\it Classification of simple weight modules for the $N=2$ superconformal algebra},
arXiv: 1904.08578vl.

%\bibitem{LPX1} D. Liu, Y. Pei, L. Xia, {\it Irreducible representations over the diamond Lie algebra}, Comm. Algebra 46(2018), no. 1, 143-148.



\bibitem{MaP} C. Martin, A. Piard, {\it Nonbounded indecomposable admissible modules over the Virasoro algebra}, Lett. Math. Phys. 23(1991), no. 4, 319-324.



\bibitem{Maz} V. Mazorchuk, {\it Verma modules over generalized Witt algebras}, Compositio Math. 115(1999), no. 1, 21-35.

\bibitem{MP} R. Moody, A. Pianzola, Lie algebras with triangular decompositions. Canadian Mathematical Society Series of Monographs and Advanced Texts. A Wiley-Interscience Publication. John Wiley \& Sons, Inc., New York, (1995).

\bibitem{M} T. Moons, {\it On the weight spaces of Lie superalgebra modules}, J. Algebra 147(1992), no. 2, 283-323.}

\bibitem{N2} J. Nilsson, a free $U(h)$-module modules and coherent families,  J. Pure Appl. Algebra 220 (2016), no. 4, 1475--1488.

\bibitem{NS}  E. Neher, A. Savage, {\it Extensions and block decompositions for finite-dimensional representations of equivariant map algebras},  Transform. Groups 20(2015), no. 1, 183-228.

\bibitem{NSS} E. Neher, A. Savage, P. Senesi, {\it Irreducible finite-dimensional representations of equivariant map algebras}, Trans. Amer. Math. Soc. 364(2012), no. 5, 2619-2646.



\bibitem{S} A. Savage, {\it Classification of irreducible quasifinite modules over map Virasoro algebras}, Transform. Groups 17(2012), no. 2, 547-570.



\bibitem{S1}  Y. Su, {\it Simple modules over the high rank Virasoro algebras}, Comm. Algebra 29(2001), no. 5, 2067-2080.

\bibitem{S2}  Y. Su, {\it Classfication of Harish-Chandra modules over the higher rank Virasoro algebras}, Comm. Math. Phys. 240(2003), no. 3. 539-551.

\bibitem{LX} Y. Xue, R. L\"{u}. {\it Simple weight modules with finite dimensional weight spaces over Witt superalgebras},  arXiv:2001.04089.
\end{comment}
\end{thebibliography}

\end{document}